\begin{document}
  \title{Model Selection with \\ Low Complexity Priors}

  \author{%
    {\sc Samuel Vaiter}\\[2pt]
    CNRS, CEREMADE, Universit\'{e} Paris-Dauphine\\
    Corresponding author: \email{vaiter@ceremade.dauphine.fr}\\[6pt]
    {\sc Mohammad Golbabaee}\\[2pt]
    CNRS, CEREMADE, Universit\'{e} Paris-Dauphine\\
    {\email{golbabaee@ceremade.dauphine.fr}}\\[6pt]
    {\sc Jalal Fadili}\\[2pt]
    GREYC, CNRS-ENSICAEN-Universit\'{e} de Caen\\
    {\email{Jalal.Fadili@greyc.ensicaen.fr}}\\[6pt]
    {\sc and}\\[6pt]
    {\sc Gabriel Peyr\'{e}} \\[2pt]
    CNRS, CEREMADE, Universit\'{e} Paris-Dauphine\\
    {\email{peyre@ceremade.dauphine.fr}}}
 
  \maketitle


\begin{abstract}
{Regularization plays a pivotal role when facing the challenge of solving ill-posed inverse problems, where the number of observations is smaller than the ambient dimension of the object to be estimated. A line of recent work has studied regularization models with various types of low-dimensional structures. In such settings, the general approach is to solve a regularized optimization problem, which combines a data fidelity term and some regularization penalty that promotes the assumed low-dimensional/simple structure. This paper provides a general framework to capture this low-dimensional structure through what we coin partly smooth functions relative to a linear manifold. These are convex, non-negative, closed and finite-valued functions that will promote objects living on low-dimensional subspaces. This class of regularizers encompasses many popular examples such as the $\lun$ norm, $\lun-\ldeux$ norm (group sparsity), as well as several others including the $\linf$ norm. We also show that the set of partly smooth functions relative to a linear manifold is closed under addition and pre-composition by a linear operator, which allows to cover mixed regularization, and the so-called analysis-type priors (e.g. total variation, fused Lasso, finite-valued polyhedral gauges). Our main result presents a unified sharp analysis of exact and robust recovery of the low-dimensional subspace model associated to the object to recover from partial measurements. This analysis is illustrated on a number of special and previously studied cases, and on an analysis of the performance of $\linf$ regularization in a compressed sensing scenario.}
{Convex regularization, Inverse problems, Model selection, Partial smoothness, Compressed Sensing, Sparsity, Total variation.}
\end{abstract}



\section{Introduction}
\label{sec:intro}

\subsection{Regularization of Linear Inverse Problems}
\label{sec:intro-linear}

Inverse problems are encountered in various areas throughout science and engineering. The goal is to provably recover the structure underlying an object $x_0 \in \RR^N$, either exactly or to a good approximation, from the partial measurements
\begin{equation}\label{eq:ip}
  y = \Phi x_0 + w ,
\end{equation}
where $y \in \RR^Q$ is the vector of observations, $w \in \RR^Q$ stands for the noise, and $\Phi \in \RR^{Q \times N}$ is a linear operator which maps the $N$-dimensional signal domain onto the $Q$-dimensional observation domain. The operator $\Phi$ is in general ill-conditioned or singular, so that solving for an accurate approximation of $x_0$ from~\eqref{eq:ip} is ill-posed.

The situation however changes if one imposes some prior knowledge on the underlying object $x_0$, which makes the search for solutions to~\eqref{eq:ip} feasible. This can be achieved via regularization which plays a fundamental role in bringing back ill-posed inverse problems to the land of well-posedness. We here consider solutions to the regularized optimization problem
\begin{equation}\label{eq:reg}\tag{$\Pp_\lambda(y)$}
  \xsol \in 
  \uArgmin{x \in \RR^N}
    \dfrac{1}{2} \norm{y - \Phi x}^2
  + \lambda J(x) ,
\end{equation}
where the first term translates the fidelity of the forward model to the observations, and $\J$ is the regularization term intended to promote solutions conforming to some notion of simplicity/low-dimensional structure, that is made precise later. The regularization parameter $\lambda > 0$ is adapted to balance between the allowed fraction of noise level and regularity as dictated by the prior on $x_0$. Before proceeding with the rest, it is worth mentioning that although we focus our analysis on the penalized form~\eqref{eq:reg}, our results can be extended with minor adaptations to the constrained formulation, i.e. the one where the data fidelity is put as a constraint. Note also that though we focus our attention on quadratic data fidelity for simplicity, our analysis carries over to more general fidelity terms of the form $F \circ \Phi$, for $F$ smooth and strongly convex. 

When there is no noise in the observations, i.e. $w = 0$ in~\eqref{eq:ip}, the equality-constrained minimization problem should be solved
\begin{equation}\label{eq:reg-noiseless}\tag{$\Pp_0(y)$}
  \xsol \in 
  \uArgmin{x \in \RR^N} J(x)
  \qsubjq
  \Phi x = y .
\end{equation}

In this paper, we consider the general case where the function $\J$ is convex, non-negative and finite-valued, hence everywhere continuous with a full domain.
This class of regularizers $\J$ include many well-studied ones in the literature. Among them, one can think of the $\lun$ norm used to enforce sparse solutions~\cite{tibshirani1996regre}, the discrete total variation semi-norm~\cite{rudin1992nonlinear}, the $\lun-\ldeux$ norm to induce block/group sparsity~\cite{yuan2005model}, or finite polyhedral gauges~\cite{vaiter13polyhedral}.

Assuming furthermore that $J$ enjoys a partial smoothness property (to be defined shortly) relative to a model subspace associated to $x_0$, our goal in this paper is to provide a unified analysis of exact and robust recovery guarantees of that subspace by solving~\eqref{eq:reg} from the partial measurements in~\eqref{eq:ip}. As a by-product, this will also entail a control on the $\ldeux$-recovery error.

\subsection{Contributions}
\label{sec:intro-contrib}

Our main contributions are as follows.

\subsubsection{Subdifferential Decomposability of Convex Functions}

Building upon Definition~\ref{defn:linmod}, which introduces the model subspace $\T_x$ at $x$, we provide an equivalent description of the subdifferential of a finite-valued convex function at $x$ in Theorem~\ref{thm:decomp}. Such a description isolates and highlights a key property of a regularizer, namely \emph{decomposability}. In turn, this property allows to rewrite the first-order minimality conditions of~\eqref{eq:reg} and~\eqref{eq:reg-noiseless} in a convenient and compact way, and this lays the foundations of our subsequent developments.

\subsubsection{Uniqueness}

In Theorem~\ref{thm:snsp}, we state a sharp sufficient condition, dubbed the Strong Null Space Property, to ensure that the solution of~\eqref{eq:reg} or~\eqref{eq:reg-noiseless} is unique. In Corollary~\ref{prop:uniqueness-topo}, we provide a weaker sufficient condition, stated in terms of a dual vector, the existence of which certifies uniqueness. Putting together Theorem~\ref{thm:decomp} and Corollary~\ref{prop:uniqueness-topo}, Theorem~\ref{thm:focu} states the sufficient uniqueness condition in terms of a specific dual certificate built from~\eqref{eq:reg} and~\eqref{eq:reg-noiseless}.

\subsubsection{Partly Smooth Functions Relative to a Linear Manifold}

In the quest for establishing robust recovery of the subspace model $\T_{x_0}$, we first need to quantify the stability of the subdifferential of the regularizer $\J$ to local perturbations of its argument. Thus, to handle such a change of geometry, we introduce the notion of \emph{partly smooth function relative to a linear manifold}.

We show in particular that two important operations preserve partial smoothness relative to a linear manifold. In Proposition~\ref{prop:sum-prg} and Proposition~\ref{prop:gauge-analysis-stable}, we show that partial smoothness relative to a linear manifold is preserved under addition and pre-composition by a linear operator. Consequently, more intricate regularizers can be built starting from simple functions, e.g. $\lun$-norm, which are known to be partly smooth relative to a linear manifold (see the review given in Section~\ref{sec:examples}).

\subsubsection{Exact and Robust Subspace Recovery}

This is the core contribution of the paper. Assuming the function is partly smooth relative to a linear manifold, we show in Theorem~\ref{thm:local-stab} that under a generalization of the irrepresentable condition~\cite{fuchs2004on-sp}, and with the proviso that the noise level is bounded and the minimal signal-to-noise ratio is high enough, there exists a whole range of the parameter $\lambda$ for which problem~\eqref{eq:reg} has a unique solution $\xsol$ living in the same subspace as $x_0$. In turn, this yields a control on $\ldeux$-recovery error within a factor of the noise level, i.e. $\norm{\xsol - x_0} = O(\norm{w})$. In the noiseless case, the irrepresentable condition implies that $x_0$ is exactly identified by solving~\eqref{eq:reg-noiseless}.

\subsubsection{Compressed Sensing Regularization with $\linf$ Norm}

To illustrate the usefulness of our findings, we apply this model identification result to the case of the $\linf$ norm in Section~\ref{sec:cs}. While there exists previous works on the stable $\ell^2$ recovery with $\linf$ regularization form random measurements, it is the first result to assess the recovery of the anti-sparse model associated to the data to recover, which is an important additional information. Our result shows that stability of the model operates at a different regime than $\ell^2$ recovery in term of scaling between the number of measurements and the anti-sparsity level. This somehow contrasts with classical results in sparse recovery where it is known that they hold at similar level (up to logarithmic terms), see Section~\ref{sec-pw-cs}.

\subsection{Related Work}
\label{sec:intro-pw}

\subsubsection{Decomposability}

In~\cite{candes2011simple}, the authors introduced a notion of decomposable norms. In fact, we show that their regularizers are a subclass of ours that corresponds to strong decomposability in the sense of the Definition~\ref{defn:strong-gauge}, beside symmetry since norms are symmetric gauges. Moreover, their definition involves two conditions, the second of which turns out to be an intrinsic property implied by polarity rather than an assumption; see the discussion after Proposition~\ref{prop:strong-dec}. Typical examples of (strongly) decomposable norms are the $\lun$, $\lun-\ldeux$ and nuclear norms. However, strong decomposability excludes many important cases. One can think of analysis-type semi-norms since strong decomposability is not preserved under pre-composition by a linear operator, or the $\linf$ norm among many others. The analysis provided in~\cite{candes2011simple} deals only with identifiability in the noiseless case. Their work was extended in~\cite{oymak2012simultaneously} when $\J$ is the sum of decomposable norms.

\subsubsection{Convergence rates}

In the inverse problems literature, a convergence (stability) rates have been derived in~\cite{burger2004convergence} with respect to the Bregman divergence for general convex regularizations $\J$. The author in~\cite{grasmair2011linear} established a stability result for general sublinear functions $\J$. The stability is however measured in terms of $\J$, and $\ell^2$-stability can only be obtained if $\J$ is coercive, which, again, excludes a large class of functions. In~\cite{fadili13stable}, a $\ell^2$-stability result for decomposable norms (in the sense of~\cite{candes2011simple}) precomposed by a linear operator is proved. However, none of these works deals with exact and robust recovery of the subspace model underlying $x_0$.

\subsubsection{Model selection}

There is large body of previous works on the problem of the model selection properties (sometimes referred to as model consistency) of low-complexity regularizers. These previous works are targeting specific regularizers, most notably sparsity, group sparsity and low rank. We thus refer to Section~\ref{sec:examples} for a discussion of these relevant previous works. A distinctive feature of our analysis is that it is generic, so it covers all these special cases, and many more. Note however that is does not cover the nuclear norm, because its associated manifolds are not linear (they are indeed composed of algebraic manifolds of low rank matrices). We have recently proposed an extension of our results to this more general non-linear case in~\cite{2014-vaiter-ps-stability}. Note however that this new analysis uses a different proof technique, and is not able to provide explicit values for the constant involved in the robustness to noise.


\subsubsection{Compressed sensing}
\label{sec-pw-cs}

Arguments based on the Gaussian width were used in~\cite{parrilo2010convex} to provide sharp estimates of the number of generic measurements required for exact and $\ell^2$-stable recovery of atomic set models from random partial information by solving a constrained form of~\eqref{eq:reg} regularized by an atomic norm. The atomic norm framework was then exploited in~\cite{rao2012signal} in the particular case of the group Lasso and union of subspace models. 
This was further generalized in \cite{amelunxen2013living} who developed for the noiseless case reliable predictions about the quantitative aspects of the phase transition in regularized linear inverse problems with random measurements. The location and width of the transition are controlled by the statistical dimension of a cone associated with the regularizer and the unknown. All this work is however restricted to a random (compressed sensing) scenario.

A notion of decomposability closely related to that of~\cite{candes2011simple}, but different, was first proposed in~\cite{negahban2010unified}. There, the authors study $\ell^2$-stability for this class of decomposable norms with a general sufficiently smooth data fidelity. This work however only handles norms, and their stability results require however stronger assumptions than ours (typically a restricted strong convexity which becomes a type of restricted eigenvalue property for linear regression with quadratic data fidelity).

\subsection{Paper Organization}
\label{sec:intro-org}

The outline of the paper is the following.
Section~\ref{sec:decomposable} fully characterizes the canonical decomposition of the subdifferential of a convex function with respect to the subspace model at $x$. Sufficient conditions ensuring uniqueness of the minimizers to~\eqref{eq:reg} and~\eqref{eq:reg-noiseless} are provided in Section~\ref{sec:uniqueness}. In Section~\ref{sec:stable}, we introduce the notion of a partly smooth function relative to a linear manifold and show that this property is preserved under addition and pre-composition by a linear operator. Section~\ref{sec:small} is dedicated to our main result, namely theoretical guarantees for exact subspace recovery in presence of noise, and identifiability in the noiseless case. Section~\ref{sec:examples} exemplifies our results on several previously studied priors, and a detailed discussion on the relation with respect to relevant previous work is provided. Section~\ref{sec:cs} delivers a bound for the sampling complexity to guarantee exact recovery of the model subspace of antisparsity minimization from noisy Gaussian measurements. Some conclusions and possible perspectives of this work are drawn in Section~\ref{sec:conclusion}. The proofs of our results are collected in the appendix.

\subsection{Notations and Elements from Convex Analysis}
\label{sec:intro-notations}

In the following, if $T$ is a vector space, $\proj_T$ denotes the orthogonal projector on $T$, and
\begin{equation*}
  x_T = \proj_T x \qandq \Phi_T = \Phi \proj_T.
\end{equation*}
For a subset $I$ of $\ens{1,\dots,N}$, we denote by $I^c$ its complement, $\abs{I}$ its cardinality. $x_{(I)}$ is the subvector whose entries are those of $x$ restricted to the indices in $I$, and $\Phi_{(I)}$ the submatrix whose columns are those of $\Phi$ indexed by $I$. For any matrix $A$, $A^*$ denotes its adjoint matrix and $A^+$ its Moore--Penrose pseudo-inverse. We denote the right-completion of the real line by $\overline{\RR} = \RR \cup \ens{+\infty}$.

A real-valued function  $f: \RR^N \to \overline{\RR}$ is coercive, if $\lim_{\norm{x} \to +\infty}f(x)=+\infty$. The effective domain of $f$ is defined by $\dom f = \enscond{x\in\RR^N}{f(x) < +\infty}$ and $f$ is proper if $\dom f \neq \emptyset$. We say that a real-valued function $f$ is lower semi-continuous (lsc) if $\liminf_{z \to x} f(z) \geq f(x)$. A function is said sublinear if it is convex and positively homogeneous.

Let the kernel of a function be denoted $\Ker f = \enscond{x \in \RR^N}{f(x) = 0}$. $\Ker f$ is a cone when $f$ is positively homogeneous.

We now provide some elements from convex analysis that are necessary throughout this paper. A comprehensive account can be found in~\cite{rockafellar1996convex,hiriart1996convex}.

\subsubsection{Sets} 

For a non-empty set $C \subset \RR^N$, we denote $\co C$ the closure of its convex hull. Its \emph{affine hull} $\Aff C$ is the smallest affine manifold containing it, i.e.
\begin{equation*}
  \Aff C = \enscond{\sum_{i=1}^k \rho_i x_i}{k > 0, \rho_i \in \RR, x_i \in C, \sum_{i=1}^k \rho_i = 1} .
\end{equation*}
It is included in the \emph{linear hull} $\Lin C$ which is the smallest subspace containing $C$. 

The interior of $C$ is denoted $\interop C$. The \emph{relative interior} $\ri C$ of a convex set $C$ is the interior of $C$ for the topology relative to its affine full.

Let $C$ be a non-empty convex set. The set $C^\circ$ given by
\begin{equation*}
  C^\circ = \enscond{v \in \RR^N}{\dotp{v}{x} \leq 1 \text{ for all } x \in C}
\end{equation*}
is called the \emph{polar} of $C$.
$C^\circ$ is a closed convex set containing the origin. When the set $C$ is also closed and contains the origin, then it coincides with its bipolar, i.e. $C^{\circ\circ}=C$.

\subsubsection{Functions} 

Let $C$ a nonempty convex subset of $\RR^N$. The \emph{indicator function} $\iota_{C}$ of $C$ is 
\begin{equation*}
\iota_{C} (x) =
  \begin{cases}
    0, & \text{if } x \in C ~ ,\\
    +\infty, & \text{otherwise}.
  \end{cases}
\end{equation*}

The Legendre-Fenchel \emph{conjugate} of a proper, lsc and convex function $f$ is
\begin{equation*}
f^*(u) = \sup_{x \in \dom f} \dotp{u}{x} - f(x) ~ ,
\end{equation*}
where $f^*$ is proper, lsc and convex, and $f^{**}=f$. For instance, the conjugate of the indicator function $\iota_C$ is the \emph{support function} of $C$
\[
	\sigma_{C}(u)=\sup_{x \in C} \dotp{u}{x} ~.
\]
$\sigma_C$ is sublinear, is non-negative if $0 \in C$, and is finite everywhere if, and only if, $C$ is a bounded set.

Let $f$ and $g$ be two functions proper closed convex functions from $\RR^N$ to $\overline{\RR}$. Their infimal convolution is the function
\[
    (f \infc g)(x) = \inf_{x_1+x_2 = x} f(x_1) + g(x_2) = \inf_{z\in\RR^N} f(z) + g(x-z)~.
\]

Let $C \subseteq \RR^N$ be a non-empty closed convex set containing the origin.
The \emph{gauge} of $C$ is the function $\gauge_C$ defined on $\RR^N$ by
\begin{equation*}
  \gauge_C(x) =
  \inf \enscond{\lambda > 0}{x \in \lambda C} .
\end{equation*}
As usual, $\gauge_C(x) = + \infty$ if the infimum is not attained. $\gauge_C$ is a non-negative, lsc and sublinear function. It is moreover finite everywhere, hence continuous, if, and only if, $C$ has the origin as an interior point, see Lemma~\ref{lem:convex-equivalence} for details.

The \emph{subdifferential} $\partial f(x)$ of a convex function $f$ at $x$ is the set
\[
	\partial f(x) = \enscond{u \in \RR^N}{f(x') \geq f(x) + \dotp{u}{x'-x}, \quad \forall x' \in \dom f} ~.
\]
An element of $\partial f(x)$ is a subgradient. If the convex function $f$ is G\^ateaux-differentiable at $x$, then its only subgradient is its gradient, i.e. $\partial f(x) = \ens{\nabla f(x)}$.

The \emph{directional derivative} $f'(x,\delta)$ of a lsc function $f$ at the point $x \in \dom f$ in the direction $\delta \in \RR^N$ is 
\begin{equation*}
  f'(x,\delta) = \lim_{t \downarrow 0} \frac{f(x + t\delta) - f(x)}{t} .
\end{equation*}
When $f$ is convex, then the function $\delta \mapsto f'(x,\cdot)$ exists and is sublinear. When $f$ has also full domain, then for any $x \in \RR^N$, $\partial f(x)$ is a non-empty compact convex set of $\RR^N$ whose support function is $f'(x,\cdot)$, i.e.
\begin{equation*}
  f'(x,\delta) = \sigma_{\partial f(x)}(\delta) = \sup_{\eta \in \partial f(x)} \dotp{\eta}{\delta} .
\end{equation*}

We also recall the fundamental first-order minimality condition of a convex function: $\xsol$ is the global minimizer of a convex function $f$ if, and only if, $0 \in \partial f(x)$.

\subsubsection{Operator norm} 

Let $\J_1$ and $\J_2$ be two finite-valued gauges defined on two vector spaces $V_1$ and $V_2$, and $A : V_1 \to V_2$ a linear map.
The \emph{operator bound} $\normOP{A}{\J_1}{\J_2}$ of $A$ between $\J_1$ and $\J_2$ is given by
\begin{equation*}
  \normOP{A}{\J_1}{\J_2} = \sup_{\J_1(x) \leq 1} \J_2(Ax) .
\end{equation*}
Note that $\normOP{A}{\J_1}{\J_2} < + \infty$ if, and only if $A \Ker(\J_1) \subseteq \Ker(\J_2)$.
In particular, if $\J_1$ is coercive (i.e. $\Ker \J_1=\{0\}$ from Lemma~\ref{lem:convex-equivalence}(iv)), then $\normOP{A}{\J_1}{\J_2}$ is finite.
As a convention, $\normOP{A}{\J_1}{\norm{\cdot}_p}$ is denoted as $\normOP{A}{\J_1}{\ell^p}$.
An easy consequence of this definition is the fact that for every $x \in V_1$,
\begin{equation*}
  \J_2(Ax) \leq \normOP{A}{\J_1}{\J_2} \J_1(x) .
\end{equation*}





\section{Model Subspace and Decomposability}
\label{sec:decomposable}

The purpose of this section is to introduce one of the main concepts used throughout this paper, namely the \textit{model subspace} associated to a convex function. The main result, Theorem~\ref{thm:decomp}, proves that the subdifferential of any convex function exhibits a decomposability property with respect to this subspace.

\subsection{Model Subspace Associated to a Convex Function}
\label{sec:convex-geom}

Let $\J$ our regularizer, i.e. a finite-valued convex function.
\begin{defn}[Model Subspace]\label{defn:linmod}
  For any vector $x \in \RR^N$, denote $\bar \S_x$ the affine hull of the subdifferential of $\J$ at $x$
  \begin{equation*}
    \bar \S_x = \Aff \partial \J(x) ,
  \end{equation*}
  and $e_x$ the orthogonal projection of $0$ onto $\bar \S_x$
  \begin{equation*}
    e_x = \uargmin{e \in \bar \S_x} \norm{e}.
  \end{equation*}
  Let 
  \begin{equation*}
    \S_x = \bar \S_x - e_x = \Lin (\partial \J(x)) \qandq \T_x = \S_x^\bot .
  \end{equation*}
  $\T_x$ is coined the \emph{model subspace} of $x$ associated to $\J$.
\end{defn}
When $J$ is G\^ateaux-differentiable at $x$, i.e. $\partial J(x)=\ens{\nabla J(x)}$, $e_x=\nabla J(x)$ and $\T_x=\RR^N$. Note that the decomposition of $\RR^N$ as a sum of the two orthogonal subspaces $\T_x$ and $\S_x$ is also the core idea underlying the $\Uu-\Vv$-decomposition/theory developed in ~\cite{Lemarechal-ULagrangian}.
 
We start by summarizing some key properties of the objects $e_x$ and $\T_x$.
\begin{prop}\label{prop:convex-basics-decompos}
  For any $x \in \RR^N$, one has
  \begin{enumerate}[(i)]
  \item $e_x \in \T_x \cap \bar \S_x$.
  \item $\bar \S_x = \enscond{\eta \in \RR^N}{\eta_{\T_x} = e_x}$.
  \end{enumerate}
\end{prop}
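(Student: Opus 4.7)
The plan is to unpack the definitions and exploit the fact that $e_x$ is defined as the orthogonal projection of $0$ onto the affine hull $\bar\S_x = \Aff \partial \J(x)$, combined with the relation $\T_x = \S_x^\perp$ where $\S_x = \bar\S_x - e_x$ is the linear subspace parallel to $\bar\S_x$.

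For (i), I would first observe that $\partial \J(x)$ is a nonempty compact convex set (since $\J$ is finite-valued and convex), hence $\bar\S_x$ is a nonempty closed affine subspace and the projection $e_x$ is well-defined and belongs to $\bar\S_x$. The standard characterization of the orthogonal projection onto an affine subspace then says that the residual $0 - e_x = -e_x$ must be orthogonal to the direction subspace of $\bar\S_x$, which is precisely $\S_x = \bar\S_x - e_x$. Thus $e_x \in \S_x^\perp = \T_x$, proving the second inclusion.

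For (ii), I would use the identification $\bar\S_x = e_x + \S_x$. For the forward inclusion, any $\eta \in \bar\S_x$ writes as $\eta = e_x + s$ with $s \in \S_x$; applying $\proj_{\T_x}$ gives $\eta_{\T_x} = \proj_{\T_x}(e_x) + \proj_{\T_x}(s) = e_x$, using (i) (so $\proj_{\T_x}(e_x)=e_x$) and $s \in \S_x = \T_x^\perp$ (so $\proj_{\T_x}(s)=0$). For the reverse inclusion, if $\eta$ satisfies $\eta_{\T_x} = e_x$, then $\eta - e_x \in \T_x^\perp = \S_x$, hence $\eta \in e_x + \S_x = \bar\S_x$.

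No step should be genuinely difficult here; the argument is a direct bookkeeping exercise with orthogonal projections and the parallel-subspace description of an affine hull. The only point that deserves a brief sentence of justification is the nonemptiness and closedness of $\bar\S_x$ that ensures $e_x$ exists and lies in $\bar\S_x$, which follows immediately from the fact that $\J$ is finite-valued convex so that $\partial \J(x)$ is a nonempty compact convex set, and from the closedness of affine subspaces of $\RR^N$.
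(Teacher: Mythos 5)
Your proof is correct and follows essentially the same route as the paper: part (i) comes from the standard characterization of the orthogonal projection onto an affine subspace, and part (ii) from the decomposition $\bar\S_x = e_x + \S_x$ together with $e_x \in \T_x$ and $\S_x = \T_x^\perp$. Your version just spells out the bookkeeping (nonemptiness/compactness of $\partial \J(x)$, both inclusions in (ii)) that the paper leaves implicit.
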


In general $e_x \not\in \partial J(x)$, which is the situation displayed on~Figure~\ref{fig:geometry-gauge}.

\begin{figure}[htbp]
  \centering
  \includegraphics[bb=7 0 131 126,scale=1]{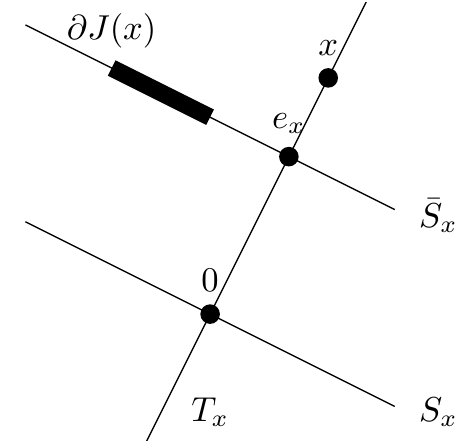}
  \caption{Illustration of the geometrical elements $(\S_x,\T_x,e_x)$.}
  \label{fig:geometry-gauge}
\end{figure}

From this section until Section~\ref{sec:stable}, we use the $\lun$-$\ldeux$ and the $\linf$ norms as illustrative examples. A more comprehensive treatment is provided in Section~\ref{sec:examples} which is completely devoted to examples.

\begin{exmp}[$\lun$-$\ldeux$ norm]
  We consider a uniform disjoint partition $\Bb$ of $\{1,\cdots,N\}$, 
  \[
	\{1,\ldots,N\} = \bigcup_{b \in \Bb} b, \quad b \cap b' = \emptyset, ~  \forall b \neq b' ~.
  \]
  The $\lun-\ldeux$ norm of $x$ is 
  \begin{equation*}
    \J(x) = \norm{x}_{\Bb} = \sum_{b \in \Bb} \norm{x_b}.
  \end{equation*}
  The subdifferential of $\J$ at $x \in \RR^N$ is
  \begin{equation*}
    \partial \J(x) =
    \enscond
    {\eta \in \RR^N}
    {
      \forall b \in I, \,
      \eta_b = \frac{x_b}{\norm{x_b}}
      \qandq
      \forall b \not\in I, \,
      \norm{\eta_b} \leq 1
    } ,
  \end{equation*}
  where $I = \enscond{b \in \Bb}{x_b \neq 0}$.
  Thus, the affine hull of $\partial J(x)$ reads
  \begin{equation*}
    \bar \S_x =
    \enscond
    {\eta \in \RR^N}
    {
      \forall b \in I, \,
      \eta_b = \frac{x_b}{\norm{x_b}}
    } .
  \end{equation*}
  Hence the projection of $0$ onto $\bar \S_x$ is
  \begin{equation*}
    e_x = (\Nn(x_b))_{b \in \Bb}
  \end{equation*}
  where $\Nn(a)=a/\norm{a}$ if $a\neq 0$, and $\Nn(0)=0$ and
  \begin{equation*}
    \S_x 
    = \bar \S_x - e_x
    = \enscond
    {\eta \in \RR^N}
    {
      \forall b \in I, \,
      \eta_b = 0
    } ,
  \end{equation*}
  and
  \begin{equation*}
    \T_x 
    = \S_x^\bot
    = \enscond
    {\eta \in \RR^N}
    {
      \forall b \not\in I, \,
      \eta_b = 0
    } .
  \end{equation*} 
\end{exmp}
\medskip
\begin{exmp}[$\linf$ norm]
  The $\linf$ norm is $\J(x) = \norm{x}_\infty = \umax{1 \leq i \leq N} \abs{x_i}$. 
  For $x=0$, $\partial \J(x)$ is the unit $\lun$ ball, hence $\bar \S_x = \S_x=\RR^N$, $\T_x=\{0\}$ and $e_x=0$. 
  For $x \neq 0$, we have
  \begin{align*}
  	  \partial \J(x) = \enscond{\eta}{ \foralls i \in I^c, \; \eta_{i} = 0, \; \dotp{\eta}{s} = 1, \; \eta_i s_i \geq 0 \; \foralls i \in I } ~.
  \end{align*}
  where $I=\enscond{i \in \{1,\ldots,N\}}{\abs{x_i}=\norm{x}_\infty}$, $s_i = \sign(x_i)$ if $i \in I$ with $\sign(0)=0$, and $s_i=0$ if $i \in I^c$. It is clear that $\bar{\S}_x$ is the affine hull of an $|I|$-dimensional face of the unit $\lun$ ball exposed by the sign subvector $s_{(I)}$. Thus $\e_x$ is the barycenter of that face, i.e. 
  \[
  \e_x = s/\abs{I} \qandq \S_x =\enscond{\eta}{\eta_{(I^c)} = 0 \qandq \dotp{\eta_{(I)}}{s_{(I)}} = 0} ~. 
  \]
  In turn
  \begin{equation*}
    \T_x = \S_x^\bot = \enscond{\alpha}{\alpha_{(I)} = \rho s_{(I)} \qforq \rho \in \RR} .
  \end{equation*} 
\end{exmp}

\subsection{Decomposability Property}
\label{sec:decomposable-prop}

\subsubsection{The subdifferential gauge and its polar}

Before providing an equivalent description of the subdifferential of $\J$ at $x$ in terms of the geometrical objects $e_x$, $\T_x$ and $\S_x$, we introduce a gauge that plays a prominent role in this description.
\begin{defn}[Subdifferential Gauge]\label{defn:subdifferential-gauge}
  Let $\J$ be a convex function.
  Let $x \in \RR^N \setminus \ens{0}$ and $\f_x \in \ri \partial \J(x)$.
  The \emph{subdifferential gauge} associated to $f_x$ is the gauge $\antigx = \gauge_{\partial \J(x) - f_x}$.
\end{defn}
Since $\partial \J(x) - f_x$ is a closed (in fact compact) convex set containing the origin, it is uniquely characterized by the subdifferential gauge $\antigx$ (see Lemma~\ref{lem:convex-equivalence}(i)).\\

The following proposition states the main properties of the gauge $\antigx$.
\begin{prop}\label{prop:anti-coer}
  The subdifferential gauge $\antigx$ is such that $\dom \antigx=\S_x$, and is coercive on $\S_x$.
\end{prop}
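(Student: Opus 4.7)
The plan is to prove both claims by analyzing the compact convex set $C := \partial J(x) - f_x$ and exploiting the fact that $f_x \in \ri \partial J(x)$.

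First, for the domain computation, I would observe that since $J$ is finite-valued and convex, $\partial J(x)$ is a nonempty compact convex set, and therefore so is $C$, which moreover contains $0$. Since $f_x \in \partial J(x) \subset \bar{\S}_x = \Aff \partial J(x)$, the translate $\partial J(x) - f_x$ lies in $\bar{\S}_x - f_x$, and the latter is precisely the linear subspace parallel to the affine hull $\bar{\S}_x$, namely $\S_x$ (independent of the choice of base point in $\bar{\S}_x$, whether $e_x$ or $f_x$). Hence $C \subset \S_x$, and consequently $\dom \antigx = \RR_+ C \subset \S_x$. For the converse inclusion, I would use that $f_x \in \ri \partial J(x)$ translates to $0 \in \ri C$. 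Since $C$ is convex and $0$ is a relative interior point, there exists $\varepsilon > 0$ such that $B(0,\varepsilon) \cap \S_x \subset C$. Given any $u \in \S_x$ with $u \neq 0$, the rescaled vector $\varepsilon u / \norm{u}$ lies in $C$, so $u \in (\norm{u}/\varepsilon) C$, proving $u \in \RR_+ C \subset \dom \antigx$. Together these inclusions yield $\dom \antigx = \S_x$.

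For coercivity, the same small-ball-in-relative-interior argument gives a quantitative bound. I would note that compactness of $C$ provides $M := \sup_{c \in C} \norm{c} < +\infty$. For any $u \in \S_x$ with $\antigx(u) = \lambda$, one has $u \in \lambda C$ (the infimum is attained since $C$ is compact and contains $0$), hence $\norm{u} \leq \lambda M$. Rearranging gives $\antigx(u) \geq \norm{u}/M$ for all $u \in \S_x$, which forces $\antigx(u) \to +\infty$ as $\norm{u} \to +\infty$ within $\S_x$, i.e. coercivity on $\S_x$.

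The main subtleties, rather than obstacles, are two points of care. First, one must justify that $\bar{\S}_x - f_x = \S_x$ even though $\S_x$ was defined via $e_x$ rather than $f_x$; this reduces to the elementary fact that translating an affine subspace by any of its points recovers the same parallel linear subspace. Second, one must invoke the hypothesis $f_x \in \ri \partial J(x)$ at the right place: it is precisely this that upgrades the trivial inclusion $\dom \antigx \subset \S_x$ to an equality and underlies the containment of a relative ball in $C$. Everything else is a routine consequence of the compactness of $\partial J(x)$ guaranteed by $J$ being finite-valued.
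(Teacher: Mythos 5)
Your proof is correct and follows the same underlying idea as the paper: the paper's proof is a one-line appeal to Lemma~\ref{lem:convex-equivalence}(v), applied to the compact convex set $\partial \J(x) - f_x$ which contains $0$ in its relative interior precisely because $f_x \in \ri \partial \J(x)$, with $\Lin(\partial \J(x) - f_x) = \S_x$. What you have done is re-derive that lemma from scratch (the relative-ball argument for the domain, the bound $\antigx(u) \geq \norm{u}/M$ for coercivity), which is a valid self-contained substitute for the citation.
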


We now turn to the gauge polar to the subdifferential gauge $\antigPx = {(\antigx)}^\circ$, where the last equality is a consequence of Lemma~\ref{lem:convex-polar-gauge}(i). $\antigPx$ comes into play in several results in the rest of the paper.  
The following proposition summarizes its most important properties.
\begin{prop}\label{prop:antig-polar}
The gauge $\antigPx$ is such that
\begin{enumerate}[(i)] 
\item Its has a full domain.
\item $\antigPx(d)=\antigPx(d_\S)=\sup_{\antigx(\eta_{\S_x}) \leq 1} \dotp{\eta_{S_x}}{d}$.
\item $\Ker \antigPx=\T_x$ and $\antigPx$ is coercive on $\S_x$.
\end{enumerate}
\end{prop}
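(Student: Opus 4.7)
The starting point is the identification of $\antigPx$ with a support function. Since $f_x \in \partial J(x)$, the set $C_x := \partial J(x) - f_x$ is a compact convex set containing the origin, so by Lemma~\ref{lem:convex-polar-gauge}(i) the polar gauge equals the support function:
\begin{equation*}
  \antigPx(d) \;=\; \gauge_{C_x^\circ}(d) \;=\; \sigma_{C_x}(d) \;=\; \sup_{\eta \in C_x} \dotp{\eta}{d}.
\end{equation*}
Moreover, since $\bar\S_x = \Aff \partial J(x)$ and both $e_x, f_x \in \bar\S_x$, one has $\bar\S_x - f_x = \S_x + (e_x - f_x) = \S_x$, which shows $C_x \subseteq \S_x$. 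These two facts are the engine behind everything.

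For \emph{(i)}, boundedness of $C_x$ (inherited from compactness of $\partial J(x)$, which itself follows from $J$ being finite-valued) immediately yields $\sigma_{C_x}(d) < +\infty$ for every $d\in\RR^N$. For \emph{(ii)}, I would use $C_x \subseteq \S_x$: for any $\eta \in C_x$ and any $d \in \RR^N$,
\begin{equation*}
  \dotp{\eta}{d} \;=\; \dotp{\eta_{\S_x}}{d} \;=\; \dotp{\eta}{d_{\S_x}},
\end{equation*}
hence $\antigPx(d) = \sigma_{C_x}(d) = \sigma_{C_x}(d_{\S_x}) = \antigPx(d_{\S_x})$. For the second equality in (ii), note that by Proposition~\ref{prop:anti-coer} the unit sublevel set of $\antigx$ is exactly $C_x$ (the condition $\antigx(\eta_{\S_x}) \leq 1$ implicitly forces $\eta_{\S_x} \in \dom\antigx = \S_x$, and on $\S_x$ the unit sublevel set of a gauge is the defining set). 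Substituting $\tilde\eta = \eta_{\S_x}$ therefore turns $\sup_{\antigx(\eta_{\S_x}) \leq 1}\dotp{\eta_{\S_x}}{d}$ into $\sup_{\tilde\eta \in C_x}\dotp{\tilde\eta}{d} = \antigPx(d)$.

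For \emph{(iii)}, the crucial input is that $f_x \in \ri \partial J(x)$, so $0 \in \ri C_x$. Combined with $C_x \subseteq \S_x$ and $\Aff C_x = \bar\S_x - f_x = \S_x$, this means $C_x$ contains a relative neighborhood of $0$ in $\S_x$, in particular a set of the form $(-\varepsilon B) \cap \S_x \subseteq C_x$ for some $\varepsilon > 0$ (where $B$ is the unit Euclidean ball). If $d \in \T_x$ then $\dotp{\eta}{d}=0$ for all $\eta \in \S_x \supseteq C_x$, hence $\antigPx(d)=0$. Conversely, if $\antigPx(d)=0$ then $\dotp{\eta}{d}\leq 0$ for every $\eta \in C_x$; applying this to both $\eta$ and $-\eta$ with $\eta$ ranging over the symmetric set $(-\varepsilon B)\cap \S_x$ forces $d \perp \S_x$, i.e. $d \in \T_x$. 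Hence $\Ker \antigPx = \T_x$. Finally, coercivity on $\S_x$ is equivalent to $\Ker \antigPx \cap \S_x = \{0\}$ for the non-negative sublinear function $\antigPx$ (Lemma~\ref{lem:convex-equivalence}(iv)), which holds since $\T_x \cap \S_x = \{0\}$.

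\paragraph{Main obstacle.}
The only non-mechanical step is the converse inclusion $\Ker \antigPx \subseteq \T_x$ in (iii); everything else is a direct unpacking of the support-function identity and the inclusion $C_x \subseteq \S_x$. That converse requires genuinely invoking $f_x \in \ri \partial J(x)$ to guarantee a \emph{symmetric} relative neighborhood of $0$ inside $C_x$, as one-sided inequalities from $\sigma_{C_x}(d)=0$ only yield $\dotp{\eta}{d}\leq 0$, not $\dotp{\eta}{d}=0$.
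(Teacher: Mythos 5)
Your proposal is correct and follows essentially the same route as the paper: both identify $\antigPx$ with the support function of $\Kk_x = \partial J(x) - f_x \subseteq \S_x$, deduce (i) from boundedness and (ii) from the inclusion in $\S_x$, and obtain $\Ker\antigPx = \T_x$ from $0 \in \ri \Kk_x$. Your symmetric-relative-neighborhood argument for the converse inclusion in (iii) is just a more explicit rendering of the paper's observation that the supremum over $\Kk_x$ is attained at a nonzero point of $\S_x$ whenever $d \notin \T_x$.
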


Let's derive the subdifferential gauge on the illustrative example of the $\linf$ norm. The case of $\lun-\ldeux$ norm is detailed in Section~\ref{sec:decomposable-special}.
\begin{exmp}[$\linf$ norm]
  Recall from Section~\ref{sec:convex-geom} that for $\J = \normi{\cdot}$, $f_x=e_x=s/\abs{I}$, with $s_{(I)}=\sign(x_{(I)})$, and $s_{(I^c)}=0$. Let $\Kk_x = \partial \J(x) - e_x$.
  It can be straightforwardly shown that in this case,
  \begin{align*}
    \Kk_x &= \enscond{v}{\foralls (i,j) \in I \times I^c, \, v_j = 0, \, \dotp{v_{(I)}}{s_{(I)}} = 0, \, - \abs{I}v_is_i \leq 1}.
  \end{align*}
  This is rewritten as
  \begin{equation*}
    \Kk_x = \S_x \cap \underbrace{\enscond{v}{\foralls i \in I, \, - \abs{I}v_is_i \leq 1}}_{=\Kk_x^\prime} .
  \end{equation*}
  Thus the subdifferential gauge reads
  \begin{equation*}
    \antigx(\eta) =
    \gauge_{\Kk_x}(\eta) =
    \max(\gauge_{\S_x}(\eta), \gauge_{\Kk_x^\prime}(\eta)) .
  \end{equation*}
  We have $\gauge_{\S_x}(\eta) = \iota_{\S_x}(\eta)$ and $\gauge_{\Kk_x^\prime}(\eta) = \umax{i \in I} (-\abs{I}s_i\eta_i)_+$, where $(\cdot)_+$ is the positive part, hence we obtain
  \begin{equation*}
    \antigx(\eta) = 
    \begin{cases}
      \umax{i \in I} (-\abs{I}s_i\eta_i)_+ & \text{if } \eta \in \S_x \\
      + \infty & \text{otherwise} .
    \end{cases}
  \end{equation*}
  Therefore the subdifferential of $\normi{\cdot}$ at $x$ takes the form
  \begin{equation*}
    \partial J(x) =
    \enscond{\eta \in \RR^N}
    {
      \eta_{T_x} = e_x = \frac{s}{\abs{I}}
      \qandq
      \umax{i \in I} (-\abs{I}s_i\eta_i)_+ \leq 1
    } .
  \end{equation*}
\end{exmp}

\subsubsection{Decomposability of the subdifferential}

Piecing together the above ingredients yields a fundamental pointwise decomposition of the subdifferential of the regularizer $\J$. This decomposability property is at the heart of our results in the rest of the paper.
\begin{thm}[Decomposability]\label{thm:decomp}
  Let $\J$ be a convex function.
  Let $x \in \RR^N \setminus \ens{0}$ and $\f_x \in \ri \partial J(x)$.
  Then the subdifferential of $\J$ at $x$ reads
    \begin{equation*}
      \partial J(x) =
      \enscond{\eta \in \RR^N}
      {
        \eta_{T_x} = e_x
        \qandq
        \antigx(\proj_{\S_x}(\eta - \f_x)) \leq 1
      } .
    \end{equation*}
\end{thm}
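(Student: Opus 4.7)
\medskip

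\noindent\textbf{Proof plan.} The plan is to rewrite the two conditions characterizing the claimed set as: (a) a condition that pins $\eta$ to the affine hull $\bar\S_x$, and (b) a condition that localizes the admissible displacement from $f_x$ inside the compact convex set $\partial J(x)-f_x$. This splits the proof into a ``direction of the affine hull'' part and a ``gauge-of-a-unit-ball'' part.

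First I would unpack the first condition. By Proposition~\ref{prop:convex-basics-decompos}(ii), the equality $\eta_{\T_x}=e_x$ is \emph{equivalent} to $\eta\in\bar\S_x$. In particular, since $f_x\in\ri\partial J(x)\subseteq\bar\S_x$ and $\bar\S_x$ is an affine manifold with direction $\S_x=\bar\S_x-e_x$, any $\eta\in\bar\S_x$ satisfies
\begin{equation*}
  \eta-f_x \;\in\; \bar\S_x-\bar\S_x \;=\; \S_x,
\end{equation*}
so that $\proj_{\S_x}(\eta-f_x)=\eta-f_x$. This observation collapses the projection in the statement and will let me handle the two conditions in sequence rather than simultaneously.

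Next I would treat the second condition. The set $K_x=\partial J(x)-f_x$ is closed convex and contains the origin (because $f_x\in\partial J(x)$), so by the standard gauge/polar correspondence (Lemma~\ref{lem:convex-equivalence}), $K_x$ coincides with the unit sublevel set of $\gauge_{K_x}=\antigx$:
\begin{equation*}
  \partial J(x) - f_x \;=\; \bigl\{\, v\in\RR^N \;:\; \antigx(v)\le 1\,\bigr\}.
\end{equation*}
By Proposition~\ref{prop:anti-coer} the right-hand side is automatically contained in $\S_x$, which is consistent with $\partial J(x)-f_x\subseteq\bar\S_x-f_x=\S_x$. Thus $\eta\in\partial J(x)$ if and only if both $\eta-f_x\in\S_x$ and $\antigx(\eta-f_x)\le 1$.

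Finally I would assemble the two parts. For the forward inclusion, if $\eta\in\partial J(x)$, then $\eta\in\bar\S_x$ gives $\eta_{\T_x}=e_x$ (so also $\proj_{\S_x}(\eta-f_x)=\eta-f_x$), and $\antigx(\eta-f_x)\le 1$ gives the gauge bound. For the reverse inclusion, $\eta_{\T_x}=e_x$ forces $\eta\in\bar\S_x$, hence $\eta-f_x\in\S_x$ and $\proj_{\S_x}(\eta-f_x)=\eta-f_x$; therefore the assumed bound $\antigx(\proj_{\S_x}(\eta-f_x))\le 1$ becomes $\antigx(\eta-f_x)\le 1$, which by the displayed gauge identity places $\eta-f_x$ in $\partial J(x)-f_x$, i.e.\ $\eta\in\partial J(x)$. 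There is no real obstacle here beyond bookkeeping; the only subtle point, and the one worth emphasizing, is that the projection $\proj_{\S_x}$ in the statement is cosmetic once one knows that the condition $\eta_{\T_x}=e_x$ already guarantees $\eta-f_x\in\S_x=\dom\antigx$ (Proposition~\ref{prop:anti-coer}), so that applying $\antigx$ to $\eta-f_x$ itself is well-defined and equal to applying it to its $\S_x$-projection.
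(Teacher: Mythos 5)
Your proof is correct and follows essentially the same route as the paper's: Proposition~\ref{prop:convex-basics-decompos} handles the $\T_x$-component (equivalently, membership in $\bar\S_x$), and the gauge/closed-convex-set correspondence of Lemma~\ref{lem:convex-equivalence} identifies $\partial J(x)-f_x$ with the unit sublevel set of $\antigx$. Your explicit remark that $\eta_{\T_x}=e_x$ already forces $\eta-f_x\in\S_x$, so that the projection $\proj_{\S_x}$ is redundant, is a useful clarification of a step the paper leaves implicit in its chain of equivalences.
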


\subsubsection{First-order minimality condition}

Capitalizing on Theorem~\ref{thm:decomp}, we are now able to deduce a convenient necessary and sufficient first-order (global) minimality condition of~\eqref{eq:reg} and~\eqref{eq:reg-noiseless}.
\begin{prop}\label{prop:foc}
  Let $x \in \RR^N$, and denote for short $\T = \T_x$ and $\S = \S_x$.
  The two following propositions hold.
  \begin{enumerate}[(i)]
  \item The vector $x$ is a global minimizer of~\eqref{eq:reg} if, and only if,
    \begin{equation*}
      \Phi_{\T}^*(y - \Phi x) = \lambda \e_x
      \qandq
      \antigx(\lambda^{-1} \Phi_\S^* (y - \Phi x) - \proj_\S(f_x)) \leq 1 .
    \end{equation*}
  \item The vector $x$ is a global minimizer of~\eqref{eq:reg-noiseless} if, and only if, there exists a dual vector $\alpha \in \RR^Q$ such that
    \begin{equation*}
      \Phi_{\T}^* \alpha = \e_x
      \qandq
      \antigx(\Phi_\S^* \alpha - \proj_\S(f_x)) \leq 1 .
    \end{equation*}
  \end{enumerate}
\end{prop}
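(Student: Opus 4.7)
The overall plan is, in each case, to apply the standard convex-analytic global minimality condition so as to reduce optimality to a membership of the form $\eta \in \partial \J(x)$, with the appropriate $\eta$, and then to invoke Theorem~\ref{thm:decomp} to split that membership into its two components: the equality along $\T_x$ and the inequality in the subdifferential gauge along $\S_x$.

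For part (i), I would start from Fermat's rule. Since $x \mapsto \frac{1}{2}\norm{y - \Phi x}^2$ is smooth with gradient $-\Phi^\ast(y - \Phi x)$ and $\J$ is convex with full domain, the subdifferential sum rule applies unconditionally, and $x$ is a global minimizer of \eqref{eq:reg} if, and only if, $\lambda^{-1}\Phi^\ast(y - \Phi x) \in \partial \J(x)$. Plugging $\eta = \lambda^{-1}\Phi^\ast(y - \Phi x)$ into Theorem~\ref{thm:decomp}, I would use linearity of orthogonal projection and the identities $\proj_{\T}\Phi^\ast = (\Phi\proj_{\T})^\ast = \Phi_\T^\ast$ and $\proj_{\S}\Phi^\ast = \Phi_\S^\ast$ to rewrite $\eta_{\T_x} = e_x$ as the first displayed equation, and $\antigx(\proj_{\S_x}(\eta - f_x)) \leq 1$ as the second.

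For part (ii), I would rewrite \eqref{eq:reg-noiseless} as the unconstrained minimization of $\J + \iota_C$ with $C = \{x : \Phi x = y\}$. Because $\J$ has full domain and $C$ is a non-empty affine subspace whenever the problem is feasible, the qualification condition $\ri(\dom \J) \cap \ri C \neq \emptyset$ holds trivially, so the subdifferential sum rule gives $\partial(\J + \iota_C)(x) = \partial \J(x) + N_C(x)$ at any feasible $x$. The normal cone to the affine set $C$ at any of its points is $(\Ker \Phi)^\bot$, every element of which can be written as $-\Phi^\ast \alpha$ for some $\alpha \in \RR^Q$. Fermat's rule then reads: $x$ is a minimizer of \eqref{eq:reg-noiseless} if, and only if, there exists $\alpha \in \RR^Q$ with $\Phi^\ast \alpha \in \partial \J(x)$. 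Applying Theorem~\ref{thm:decomp} to $\eta = \Phi^\ast \alpha$ and using the same splitting as in (i) yields the two announced conditions.

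I do not expect any genuine obstacle: the argument is essentially an algebraic unwinding of Theorem~\ref{thm:decomp} on top of classical first-order conditions. The only mildly delicate ingredient is the constraint qualification required for the sum rule in (ii), which is gracefully taken care of by the standing assumption that $\J$ is everywhere finite-valued.
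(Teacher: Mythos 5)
Your proposal is correct and follows essentially the same route as the paper: both reduce optimality via Fermat's rule (and the sum rule with the normal cone of the affine constraint in part (ii)) to the membership $\eta \in \partial \J(x)$ for $\eta = \lambda^{-1}\Phi^*(y-\Phi x)$ or $\eta = \Phi^*\alpha$, and then project onto $\T$ and $\S$ using Theorem~\ref{thm:decomp}. Your treatment is if anything slightly more explicit about the constraint qualification and the identification $N_C(x) = \Im(\Phi^*)$, which the paper expresses equivalently as $\Phi^* N_{\enscond{\alpha}{\alpha = y}}(\Phi x)$ with the normal cone of a singleton being all of $\RR^Q$.
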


\subsection{Strong Gauge}
\label{sec:decomposable-special}
In this section, we study a particular subclass of regularizers $\J$ that we dub strong gauges. We start with some definitions.

\begin{defn}
  A finite-valued regularizing gauge $\J$ is \emph{separable} with respect to $\T = \S^\bot$ if
  \begin{equation*}
    \foralls (x,x') \in \T \times \S, \quad
    \J(x + x') = \J(x) + J(x') .
  \end{equation*}
\end{defn}

Separability of $\J$ is equivalent to the following property on the polar $\J^\circ$.
\begin{lem}\label{lem:eq-separable}
  Let $\J$ be a finite-valued gauge.
  Then, $\J$ is separable w.r.t. to $\T = \S^\bot$ if, and only if its polar $\J^\circ$ satisfies
    \begin{equation*}
      \Js(x+x') = \max \left( \Js(x), \Js(x') \right), \quad \foralls (x,x') \in \T \times \S ~.
    \end{equation*}
\end{lem}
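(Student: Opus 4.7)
The plan is to prove both implications by direct computation on the support-function representation of polars, relying on two ingredients: (a) the biduality $J = (\Js)^\circ$, which is available because the finite-valued gauge $\J$ is closed, convex, positively homogeneous and vanishes at the origin; (b) the orthogonality $\T \oplus \S = \RR^N$, which gives $\dotp{u}{z} = \dotp{u}{\proj_\T z}$ for every $u \in \T$, and symmetrically for $u' \in \S$, so that all cross-terms vanish as soon as one vector has a prescribed $\T/\S$ splitting.

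For the direction ``$\Rightarrow$'', I would start from
\begin{equation*}
  \Js(u+u') = \sup_{J(z) \leq 1} \dotp{u+u'}{z},
\end{equation*}
write $z = a + b$ with $a = \proj_\T z$ and $b = \proj_\S z$, and use separability to replace the single constraint $J(z) \leq 1$ by $J(a) + J(b) \leq 1$ over $(a,b)\in \T\times\S$. Orthogonality reduces the objective to $\dotp{u}{a} + \dotp{u'}{b}$, and parametrising $s = J(a)$, $t = J(b)$ together with positive homogeneity of $\J$ yields
\begin{equation*}
  \Js(u+u') \;=\; \sup_{s,t\geq 0,\ s+t\leq 1}\bigl(s\,\varphi_\T(u) + t\,\varphi_\S(u')\bigr) \;=\; \max\bigl(\varphi_\T(u),\,\varphi_\S(u')\bigr),
\end{equation*}
where $\varphi_\T(u) := \sup_{a \in \T,\ J(a)\leq 1}\dotp{u}{a}$ and $\varphi_\S$ is defined symmetrically; the last equality is the standard maximisation of a linear form over a simplex. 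It remains to identify $\varphi_\T(u)$ with $\Js(u)$ when $u \in \T$: in the full supremum $\Js(u) = \sup_{J(z)\leq 1}\dotp{u}{z}$ orthogonality again kills the $\S$-component of $z$, and separability shows that the choice $\proj_\S z = 0$ is feasible and (weakly) optimal. The same holds for $u' \in \S$, concluding the forward direction.

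The direction ``$\Leftarrow$'' is the polar mirror of the above, enabled by biduality. I would write $J(a+b) = \sup_{\Js(\eta)\leq 1}\dotp{\eta}{a+b}$, split $\eta = v + w$ with $v = \proj_\T\eta$ and $w = \proj_\S\eta$, and use the hypothesised max-identity for $\Js$ to turn $\Js(\eta)\leq 1$ into the two independent constraints $\Js(v)\leq 1$ and $\Js(w)\leq 1$. Orthogonality decouples the objective into $\dotp{v}{a}+\dotp{w}{b}$, so the supremum factors into a sum of two independent suprema. A symmetric argument (using that $J^{\circ\circ}=J$ and that the hypothesis forces $\Js(\proj_\T \eta) \leq \Js(\eta)$) identifies each summand with $J(a)$ and $J(b)$ respectively, yielding the separability identity.

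The main obstacle I foresee is bookkeeping rather than depth: at each step the equivalence of constraints must be established at the level of the entire feasible set, not pointwise, and one must respect the $\overline{\RR}$-valued nature of $\Js$ whenever $\Ker \J \neq \ens{0}$. With the conventions $0\cdot(+\infty)=0$ and $\max(+\infty,\cdot)=+\infty$ all the identities still hold, but this deserves an explicit check so that none of the suprema is silently infinite in the wrong direction.
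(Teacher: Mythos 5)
Your proof is correct and follows essentially the same route as the paper's: both directions rewrite the polar (resp.\ the gauge, via biduality) as a support function, split the dual variable along $\T \oplus \S$, convert the constraint using separability (resp.\ the max identity), and decouple the supremum — your simplex parametrisation is the paper's $\rho \in [0,1]$ parametrisation in different clothes. The only cosmetic difference is that the paper identifies the restricted suprema with $\Js$ and $\J$ via the support-function-of-an-intersection formula $\sigma_{C\cap\T}$, whereas you argue directly that the $\S$-component may be set to zero without loss; your explicit attention to the degenerate case $\Ker\J \neq \ens{0}$ is a welcome extra care the paper glosses over.
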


The decomposability of $\partial \J(x)$ as described in Theorem~\ref{thm:decomp} depends on the particular choice of the map $x \mapsto f_x \in \ri \partial \J(x)$. An interesting situation is encountered when $e_x \in \ri \partial J(x)$, in which case, one can just choose $f_x = e_x$, hence implying that $f_{\S_x} = 0$. Strong gauges are precisely a class of gauges for which this situation occurs.

In the sequel, for a given model subspace $\T$, we denote $\widetilde \T$ the set of vectors sharing the same $\T$,
\begin{equation*}
    \widetilde \T =
    \enscond{x \in \RR^N}{\T_x = T} .
  \end{equation*}
Using positive homogeneity, it is easy to show that $\T_{\rho x} = \T_{x}$ and $e_{\rho x}=e_{x}$ $\forall \rho > 0$, see Proposition~\ref{prop:convex-basics-decompos-gauge}(i). Thus $\widetilde \T$ is a non-empty cone which is contained in $\T$ by Proposition~\ref{prop:convex-basics-decompos-gauge}(ii).

\begin{defn}[Strong Gauge]\label{defn:strong-gauge}
  A \emph{strong gauge} on $\T$ is a finite-valued gauge $\J$ such that 
  \begin{enumerate}
  \item For every $x \in \widetilde \T$, $e_x \in \ri \partial \J(x)$.
  \item $\J$ is separable with respect to $\T$ and $\S = \T^\bot$.
  \end{enumerate}
\end{defn}

The following result shows that the decomposability property of Theorem~\ref{thm:decomp} has a simpler form when $\J$ is a strong gauge.
\begin{prop}\label{prop:strong-dec}
  Let $\J$ be a strong gauge on $\T_x$.
  Then, the subdifferential of $\J$ at $x$ reads
    \begin{equation*}
      \partial J(x) =
      \enscond{\eta \in \RR^N}
      {
        \eta_{T_x} = e_x
        \qandq
        \Js(\eta_{\S_x}) \leq 1
      } .
    \end{equation*}    
\end{prop}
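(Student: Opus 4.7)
The plan is to apply Theorem~\ref{thm:decomp} with the distinguished choice $f_x = e_x$, which is legitimate precisely because the first requirement of Definition~\ref{defn:strong-gauge} supplies $e_x \in \ri \partial J(x)$. Since $e_x \in \T_x$ by Proposition~\ref{prop:convex-basics-decompos}(i), the projection simplifies: $\proj_{\S_x}(\eta - e_x) = \eta_{\S_x}$. Therefore Theorem~\ref{thm:decomp} already gives
\begin{equation*}
  \partial J(x) = \enscond{\eta \in \RR^N}{\eta_{\T_x} = e_x \qandq \antigx(\eta_{\S_x}) \leq 1} ,
\end{equation*}
and the only remaining task is to identify $\antigx$ with $\Js$ on $\S_x$.

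To carry this out, I will characterize the translated subdifferential $\partial J(x) - e_x$, whose gauge is by definition $\antigx$. Using the standard description of the subdifferential of a finite-valued gauge at $x \neq 0$, namely $\partial J(x) = \enscond{\eta}{\Js(\eta) \leq 1, \dotp{\eta}{x} = J(x)}$, the $\T_x$-part of $\eta \in \partial J(x)$ is forced to equal $e_x$, so every element can be written $\eta = e_x + v$ with $v \in \S_x$. Note that $x \in \T_x$: this follows from the fact that $\dotp{\eta-\eta'}{x} = J(x) - J(x) = 0$ for all $\eta,\eta' \in \partial J(x)$, which places $x$ in $\S_x^\bot = \T_x$. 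Consequently $\dotp{v}{x} = 0$ for $v \in \S_x$, and the condition $e_x + v \in \partial J(x)$ reduces to $\Js(e_x + v) \leq 1$. Invoking separability of $\J$ (second requirement of Definition~\ref{defn:strong-gauge}) together with Lemma~\ref{lem:eq-separable} converts this into
\begin{equation*}
  \Js(e_x + v) = \max\bigl(\Js(e_x),\Js(v)\bigr) \leq 1 .
\end{equation*}
Because $e_x \in \partial J(x)$ we have $\Js(e_x) \leq 1$ for free, so the condition collapses to $\Js(v) \leq 1$. Hence $\partial J(x) - e_x = \enscond{v \in \S_x}{\Js(v) \leq 1}$.

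From here the conclusion is immediate: the gauge of $\enscond{v \in \S_x}{\Js(v) \leq 1}$ equals $\Js$ on $\S_x$ and is $+\infty$ off $\S_x$ (so $\antigx(\eta_{\S_x}) = \Js(\eta_{\S_x})$ always), and substituting into the expression displayed above yields the claim. The main obstacle is the set identity $\partial J(x) - e_x = \enscond{v \in \S_x}{\Js(v) \leq 1}$: the containment in $\S_x$ is given by Theorem~\ref{thm:decomp}, but matching the polar sublevel set with the translated subdifferential genuinely requires both strong-gauge ingredients simultaneously—separability, to split $\Js(e_x + v)$ via Lemma~\ref{lem:eq-separable}, and $e_x \in \partial J(x)$, to discharge the $\Js(e_x) \leq 1$ term appearing in the maximum.
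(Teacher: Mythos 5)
Your proof is correct, and its overall strategy matches the paper's: specialize Theorem~\ref{thm:decomp} to the choice $f_x=e_x$ (licensed by Definition~\ref{defn:strong-gauge}(1)), note $\proj_{\S_x}(e_x)=0$, and then identify $\antigx$ with $\Js$ on $\S_x$, with the two strong-gauge ingredients --- separability via Lemma~\ref{lem:eq-separable} and $\Js(e_x)\leq 1$ from $e_x\in\partial J(x)\subseteq C^\circ$ --- doing the real work in both arguments. Where you diverge is in how that last identification is carried out. The paper starts from the closed-form expression $\antigx(\eta)=\inf_{\tau\geq 0}\max\bigl(\Js(\tau e_x+\eta),\tau\bigr)$ on $\S_x$ (Proposition~\ref{prop:convex-basics-decompos-gauge}(iii)) and simplifies it: separability turns the inner term into $\max(\tau\Js(e_x),\Js(\eta_{\S_x}))$, the bound $\Js(e_x)\leq 1$ absorbs it into $\tau$, and the infimum is attained at $\tau=0$. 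You instead work at the level of sets, using the exposed-face description $\partial J(x)=\enscond{\eta}{\Js(\eta)\leq 1,\ \dotp{\eta}{x}=J(x)}$ together with $x\in\T_x$ to show directly that $\partial J(x)-e_x=\enscond{v\in\S_x}{\Js(v)\leq 1}$, and then read off the gauge of that set. Your route is slightly more self-contained in that it bypasses the $\inf_\tau$ formula entirely, needing only Proposition~\ref{prop:aff-pol} and Proposition~\ref{prop:convex-basics-decompos-gauge}(i)--(ii); the paper's route is a shorter calculation once that formula is available. Both arguments are complete (and both, like Theorem~\ref{thm:decomp} itself, implicitly assume $x\neq 0$).
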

When $\J$ is in addition a norm, this coincides exactly with the decomposability definition of~\cite{candes2011simple}. Note however that the last part of assertion (ii) in Proposition~\ref{prop:antig-polar} is an intrinsic property of the polar of the subdifferential gauge, while it is stated as an assumption in ~\cite{candes2011simple}.  

\begin{exmp}[$\lun$-$\ldeux$ norm]
Recall the notations of this example in Section~\ref{sec:convex-geom}. Since $e_x = (\Nn(x_b))_{b \in \Bb} \in \ri \partial \J(x)$, and the $\lun$-$\ldeux$ norm is separable, it is a strong norm according to Definition~\ref{defn:strong-gauge}. Thus, its subdifferential at $x$ reads
  \begin{equation*}
    \partial \J(x) =
    \enscond{\eta \in \RR^N}
    {
      \eta_{T_x} = e_x = (\Nn(x_b))_{b \in \Bb}
      \qandq
      \umax{b \not\in I} \norm{\eta_b} \leq 1
    } .    
  \end{equation*}
\end{exmp}
Note however that, except for $N=2$, $\linf$ is not a strong gauge.



\section{Uniqueness}
\label{sec:uniqueness}

This section derives sufficient conditions under which the solution of problems~\eqref{eq:reg} and~\eqref{eq:reg-noiseless} is unique.

We start with the key observation that although~\eqref{eq:reg} does not necessarily have a unique minimizer in general, all solutions share the same image under $\Phi$.
\begin{lem}\label{lem:same-image}
  Let $x,x'$ be two solutions of~\eqref{eq:reg}.
  Then,
  \begin{equation*}
    \Phi x = \Phi x'.
  \end{equation*}
\end{lem}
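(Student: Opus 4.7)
The plan is to exploit the strict convexity of the quadratic data-fidelity term $x \mapsto \tfrac{1}{2}\norm{y-\Phi x}^2$ in the variable $\Phi x$, combined with the convexity (but not necessarily strict convexity) of $J$, and the fact that the set of minimizers of a convex function is convex.

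First I would fix two minimizers $x, x'$ of~\eqref{eq:reg}, and introduce the convex combination $x_\rho = (1-\rho)x + \rho x'$ for some $\rho \in (0,1)$. Since the set of minimizers of the convex objective $F(x) = \tfrac{1}{2}\norm{y-\Phi x}^2 + \lambda J(x)$ is convex, $x_\rho$ is also a minimizer, so $F(x_\rho) = F(x) = F(x')$. Writing $g(z)=\tfrac{1}{2}\norm{y-z}^2$, which is strictly convex on $\RR^Q$, and using convexity of $J$, I would chain the inequalities
\begin{equation*}
  g(\Phi x_\rho) + \lambda J(x_\rho)
  \leq (1-\rho)\bigl(g(\Phi x)+\lambda J(x)\bigr) + \rho\bigl(g(\Phi x')+\lambda J(x')\bigr)
  = F(x_\rho),
\end{equation*}
so equality holds throughout. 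Separating the two convex pieces, and using $J(x_\rho) \leq (1-\rho)J(x)+\rho J(x')$, this forces
\begin{equation*}
  g\bigl((1-\rho)\Phi x + \rho\, \Phi x'\bigr) = (1-\rho) g(\Phi x) + \rho\, g(\Phi x').
\end{equation*}

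Finally, I would invoke strict convexity of $g$: the above equality in Jensen's inequality for a strictly convex function on a line segment forces $\Phi x = \Phi x'$, which is the desired conclusion. The argument is essentially a standard convex-analytic observation, and I do not anticipate any obstacle; the only subtle point is that $J$ need not be strictly convex (and in fact typically is not for the regularizers of interest), which is precisely why uniqueness of $\Phi x$ does not automatically upgrade to uniqueness of $x$ itself — and this is what motivates the more refined analysis carried out in the subsequent results of this section.
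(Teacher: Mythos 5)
Your proof is correct and uses essentially the same argument as the paper: strict convexity of $u \mapsto \tfrac{1}{2}\norm{y-u}^2$ combined with convexity of $J$ applied to a convex combination of two minimizers. The paper phrases it as a contradiction (assuming $\Phi x \neq \Phi x'$ yields a point with strictly smaller objective value), while you run the equality case of Jensen's inequality directly; the substance is identical.
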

Consequently, the set of the minimizers of~\eqref{eq:reg} is a closed convex subset of the affine space $x + \Ker(\Phi)$, where $x$ is any minimizer of~\eqref{eq:reg}. This is also obviously the case for~\eqref{eq:reg-noiseless} since all feasible solutions belong to the affine space $x_0+ \Ker \Phi$.

\subsection{The Strong Null Space Property}
\label{sec:uniqueness-nsp}
The following theorem gives a sufficient condition to ensure uniqueness of the solution to~\eqref{eq:reg} and~\eqref{eq:reg-noiseless}, that we coin \emph{Strong Null Space Property}. This condition is a generalization of the Null Space Property introduced in~\cite{donoho01uncertainty} and popular in $\lun$ regularization.

\begin{thm}\label{thm:snsp}
  Let $x$ be a solution of~\eqref{eq:reg} (resp. a feasible point of~\eqref{eq:reg-noiseless}). Denote $\T=\S^\perp=\T_x$ the associated model subspace.
  If the \emph{Strong Null Space Property} holds
  \begin{equation}\label{eq:nsps}\tag{NSP$^\text{S}$}
    \forall \delta \in \Ker(\Phi) \setminus \ens{0}, \quad
    \dotp{e_x}{\delta_{\T}} + \dotp{\proj_{\S}(\f_{x})}{\delta_{\S}}
    < \antigPx(- \delta_{\S}) ,
  \end{equation}
  then the vector $x$ is the unique minimizer of~\eqref{eq:reg} (resp.~\eqref{eq:reg-noiseless}).  
\end{thm}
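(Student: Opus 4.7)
The approach is to show that NSP$^{\textrm{S}}$ forces the directional derivative $J'(x,\delta)$ to be strictly positive along every nonzero $\delta \in \Ker(\Phi)$; convexity of $J$ then rules out any competing feasible point (noiseless case) or alternative minimizer (penalized case). The reduction is straightforward: for~\eqref{eq:reg-noiseless}, any other feasible point $x'$ yields $\delta := x'-x \in \Ker(\Phi) \setminus \ens{0}$ directly; for~\eqref{eq:reg}, Lemma~\ref{lem:same-image} gives $\Phi x' = \Phi x$, hence $\delta \in \Ker(\Phi)$, and equality of the data-fidelity terms then forces $J(x')=J(x)$. In both situations it suffices to show $J(x+\delta) > J(x)$ for every $\delta \in \Ker(\Phi) \setminus \ens{0}$, which, via the subgradient inequality $J(x+\delta) \geq J(x) + J'(x,\delta)$, follows from $J'(x,\delta) > 0$.

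To obtain a closed form for $J'(x,\delta) = \sigma_{\partial J(x)}(\delta)$, I would use Theorem~\ref{thm:decomp} to parametrize an arbitrary $\eta \in \partial J(x)$ as $\eta = e_x + \proj_\S(f_x) + w$, where $w \in \S_x$ satisfies $\antigx(w) \leq 1$ (this is the equivalent description of the constraints $\eta_{\T_x} = e_x$ and $\antigx(\proj_\S(\eta - f_x)) \leq 1$). Splitting $\delta = \delta_\T + \delta_\S$ along the orthogonal decomposition $\T_x \oplus \S_x$, and using $e_x \in \T_x$ while $\proj_\S(f_x), w \in \S_x$, one obtains
\[
  \dotp{\eta}{\delta} = \dotp{e_x}{\delta_\T} + \dotp{\proj_\S(f_x)}{\delta_\S} + \dotp{w}{\delta_\S}.
\]
Taking the supremum over admissible $w$ and invoking Proposition~\ref{prop:antig-polar}(ii) to identify that supremum as $\antigPx(\delta_\S)$ yields the key formula
\[
  J'(x,\delta) = \dotp{e_x}{\delta_\T} + \dotp{\proj_\S(f_x)}{\delta_\S} + \antigPx(\delta_\S).
\]

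The conclusion is then a one-line application of NSP$^{\textrm{S}}$ to $-\delta$, which belongs to $\Ker(\Phi) \setminus \ens{0}$ since $\Ker(\Phi)$ is a linear subspace. Evaluated at $-\delta$, the hypothesis reads $-\dotp{e_x}{\delta_\T} - \dotp{\proj_\S(f_x)}{\delta_\S} < \antigPx(\delta_\S)$, equivalently $\dotp{e_x}{\delta_\T} + \dotp{\proj_\S(f_x)}{\delta_\S} > -\antigPx(\delta_\S)$; substituting into the formula above gives $J'(x,\delta) > 0$, hence $J(x+\delta) > J(x)$. This contradicts $J(x') = J(x)$ in the penalized case, and establishes strict increase of $J$ along every admissible direction in the noiseless case, proving uniqueness in both. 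The main delicate step is the middle one: correctly combining Theorem~\ref{thm:decomp} with Proposition~\ref{prop:antig-polar}(ii) to turn the pointwise description of $\partial J(x)$ into an explicit expression for $J'(x,\delta)$ in terms of $e_x$, $f_x$ and the polar subdifferential gauge; once that formula is in hand, the rest is a symmetry argument exploiting $-\Ker(\Phi) = \Ker(\Phi)$.
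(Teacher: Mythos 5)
Your proof is correct and follows essentially the same route as the paper: reduce uniqueness to strict positivity of $J'(x,\delta)$ along $\Ker(\Phi)\setminus\ens{0}$ (the paper packages the monotone difference-quotient argument as a separate lemma), establish the formula $J'(x,\delta)=\dotp{e_x}{\delta_{\T}}+\dotp{\proj_{\S}(f_x)}{\delta_{\S}}+\antigPx(\delta_{\S})$, and conclude by evaluating the hypothesis at $-\delta$ using that $\Ker(\Phi)$ is a subspace. The only (immaterial) difference is that you derive the directional-derivative formula by parametrizing $\partial J(x)$ via Theorem~\ref{thm:decomp} and taking the supremum with Proposition~\ref{prop:antig-polar}(ii), whereas the paper obtains the same identity by writing $\antigPx(\delta)$ as the support function of $\partial J(x)-\ens{f_x}$.
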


This result reduces to the one proved in~\cite{fadili13stable} when $\J$ is a strong norm, i.e. decomposable in the sense of~\cite{candes2011simple}, pre-composed by a linear operator. Note that when specializing \eqref{eq:nsps} to a strong gauge $\J$, it reads
\begin{equation*}
  \forall \delta \in \Ker(\Phi) \setminus \ens{0}, \quad
  \dotp{e_x}{\delta_{\T_x}}
  < \J(- \delta_{\S_x}).  
\end{equation*}

\subsection{Dual Certificates}
\label{sec:uniqueness-topological}

In this section we derive from~\eqref{eq:nsps} a weaker sufficient condition, stated in terms of a dual vector, the existence of which certifies uniqueness.

For some model subspace $\T$, the restricted injectivity of $\Phi$ on $\T$ plays a central role in the sequel. This is achieved by imposing that
\begin{equation}\label{eq:injT}\tag{$\Cc_T$}
  \Ker(\Phi) \cap \T = \ens{0} .
\end{equation}
To understand the importance of~\eqref{eq:injT}, consider the noiseless case where we want to recover a vector $x_0$ from $y=\Phi x_0$, whose model subspace is $\T$. Assume that the latter is known. From Proposition~\ref{prop:convex-basics-decompos}(iv), $x_0 \in \T \cap \{x: y = \Phi x\}$. For $x_0$ to be uniquely recovered from $y$,~\eqref{eq:injT} must be verified. Otherwise, if~\eqref{eq:injT} does not hold, then any $x_0+\delta$, with $\delta \in \Ker \Phi \cap \T \setminus \{0\}$, is also a candidate solution. Thus, such objects cannot be uniquely recovered.\\

We can derive from Theorem~\ref{thm:snsp} the following corollary.
\begin{cor}\label{prop:uniqueness-topo}
  Let $x$ be a solution of~\eqref{eq:reg} (resp. a feasible point of~\eqref{eq:reg-noiseless}).
  Assume that there exists a dual vector $\alpha$ such that $\eta = \Phi^*\alpha \in \ri( \partial \J(x) )$, and~\eqref{eq:injT} holds where $T = \T_x$.
  Then $x$ is the unique solution of~\eqref{eq:reg} (resp.~\eqref{eq:reg-noiseless}).
\end{cor}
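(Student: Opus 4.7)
The plan is to show that the hypotheses of Corollary~\ref{prop:uniqueness-topo} imply the Strong Null Space Property~\eqref{eq:nsps}, after which uniqueness in both the noisy and noiseless cases follows directly from Theorem~\ref{thm:snsp}. The argument splits naturally into three steps: unpacking what $\eta \in \ri \partial \J(x)$ means via the decomposability of Theorem~\ref{thm:decomp}, using orthogonality of $\eta$ to $\Ker(\Phi)$ together with the polar pairing to bound the left-hand side of~\eqref{eq:nsps}, and finally invoking~\eqref{eq:injT} to upgrade the bound to a strict inequality.

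First, I would translate the dual certificate assumption $\eta = \Phi^*\alpha \in \ri \partial \J(x)$ into the decomposable parametrization. Since $\Aff \partial \J(x) = e_x + \S_x$ by Proposition~\ref{prop:convex-basics-decompos}, and since the subdifferential gauge $\antigx$ is finite and coercive on $\S_x$ by Proposition~\ref{prop:anti-coer}, the origin lies in the interior of the sublevel set $\enscond{u \in \S_x}{\antigx(u) \leq 1}$ relative to $\S_x$, so its relative interior coincides with the strict sublevel set. Combined with Theorem~\ref{thm:decomp}, this yields
\begin{equation*}
\eta_{\T_x} = e_x \qandq \antigx\bigl(\proj_{\S_x}(\eta - f_x)\bigr) < 1.
\end{equation*}

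Next, for any $\delta \in \Ker(\Phi)$ one has $\dotp{\eta}{\delta} = \dotp{\alpha}{\Phi\delta} = 0$; splitting $\delta = \delta_{\T_x} + \delta_{\S_x}$ and using $\eta_{\T_x} = e_x$ rearranges this to $\dotp{e_x}{\delta_{\T_x}} = -\dotp{\eta_{\S_x}}{\delta_{\S_x}}$. Substituting into the left-hand side of~\eqref{eq:nsps} gives
\begin{equation*}
\dotp{e_x}{\delta_{\T_x}} + \dotp{\proj_{\S_x}(f_x)}{\delta_{\S_x}} = -\dotp{\proj_{\S_x}(\eta - f_x)}{\delta_{\S_x}}.
\end{equation*}
The standard polarity inequality on $\S_x$ then bounds the right-hand side by $\antigx\bigl(\proj_{\S_x}(\eta - f_x)\bigr)\,\antigPx(-\delta_{\S_x})$, which, by the strict inequality obtained above, is strictly less than $\antigPx(-\delta_{\S_x})$ provided the latter is nonzero.

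The decisive step, and the only place where the restricted injectivity hypothesis~\eqref{eq:injT} enters, is to rule out $\antigPx(-\delta_{\S_x}) = 0$ for $\delta \in \Ker(\Phi)\setminus\ens{0}$. By Proposition~\ref{prop:antig-polar}(iii), $\Ker \antigPx = \T_x$, so $\antigPx(-\delta_{\S_x}) = 0$ forces $\delta_{\S_x} \in \T_x \cap \S_x = \ens{0}$, hence $\delta = \delta_{\T_x} \in \T_x \cap \Ker(\Phi)$; by~\eqref{eq:injT} this gives $\delta = 0$, a contradiction. Consequently~\eqref{eq:nsps} is verified, and Theorem~\ref{thm:snsp} delivers uniqueness of $x$ as a solution of~\eqref{eq:reg} (respectively~\eqref{eq:reg-noiseless}). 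I expect the only slightly delicate point to be the first step, namely identifying $\ri \partial \J(x)$ with the strict sublevel set of $\antigx$ inside the affine hull of the subdifferential, which is where the finiteness and coercivity of $\antigx$ on $\S_x$ are essential; everything else is either a one-line rewriting or a direct invocation of an earlier result.
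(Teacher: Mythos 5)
Your proof is correct and follows essentially the same route as the paper: both arguments reduce the claim to the Strong Null Space Property~\eqref{eq:nsps} and conclude via Theorem~\ref{thm:snsp}, and both use $\Ker \antigPx = \T_x$ (Proposition~\ref{prop:antig-polar}(iii)) together with~\eqref{eq:injT} in exactly the same place, namely to exclude nonzero $\delta \in \Ker(\Phi)$ with $\delta_{\S_x} = 0$. The only difference is in how the hypothesis $\eta \in \ri \partial \J(x)$ is converted into a strict inequality: you go through the decomposability of Theorem~\ref{thm:decomp} and the gauge--polar pairing for $\antigx$, whereas the paper invokes the support-function characterization of the relative interior ($\eta \in \ri \partial \J(x)$ iff $\J'(x,\delta) > \dotp{\eta}{\delta}$ whenever $\J'(x,\delta) + \J'(x,-\delta) > 0$) combined with Lemma~\ref{lem:directional-gauge} --- two faces of the same duality, with no substantive gap on your side.
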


Piecing together Theorem~\ref{thm:decomp} and Corollary~\ref{prop:uniqueness-topo}, one can build a particular dual certificate for~\eqref{eq:reg}, and then state a sufficient uniqueness explicitly in terms of the decomposable structure of the subdifferential of the regularizer $\J$.

\begin{thm}\label{thm:focu}
  Let $x \in \RR^N$, and suppose that $f_x \in \ri \partial \J(x)$.
  Assume furthermore that~\eqref{eq:injT} holds for $\T = \T_x$ and let $\S = \T^\bot$.
  \begin{enumerate}[(i)]
  \item
    If
    \begin{align}
      &\Phi_{\T}^* ( y-\Phi x)  = \lambda e_x, \label{eq:cs-min-T} \\
      &\antigx \left( \lambda^{-1} \Phi_{\S}^*(y - \Phi x)  - \proj_{\S}(f_x) \right) < 1 .  \label{eq:cs-min-S} 
    \end{align}
    then $x$ is the unique solution of~\eqref{eq:reg}.
  \item
    If there exists a dual certificate $\alpha$ such that
    \begin{equation*}
      \Phi_{\T}^* \alpha = e_x
      \qandq \antigx \left( \Phi_{\S}^* \alpha  - \proj_\S(f_x) \right) < 1, 
    \end{equation*}
    then $x$ is the unique solution of~\eqref{eq:reg-noiseless}.
  \end{enumerate}
\end{thm}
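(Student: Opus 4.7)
The strategy is in two steps: certify that $x$ is optimal via Proposition~\ref{prop:foc}, and then promote optimality to uniqueness via Corollary~\ref{prop:uniqueness-topo}. Theorem~\ref{thm:decomp} bridges the two, by translating the ``$\Phi^*\alpha \in \ri \partial J(x)$'' condition of Corollary~\ref{prop:uniqueness-topo} into an $e_x$-equation on $\T$ plus a \emph{strict} subdifferential-gauge inequality on $\S$, which is precisely the strict form of the first-order condition assumed here.

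For part (i), I would take as candidate dual certificate the normalized residual $\alpha = (y - \Phi x)/\lambda$. Using $\Phi_{\T}^* = \proj_{\T} \Phi^*$ and $\Phi_{\S}^* = \proj_{\S} \Phi^*$, identities \eqref{eq:cs-min-T} and \eqref{eq:cs-min-S} rewrite as
\begin{equation*}
  (\Phi^*\alpha)_{\T} = e_x \qandq \antigx\bigl(\proj_{\S}(\Phi^*\alpha - f_x)\bigr) < 1.
\end{equation*}
Comparing with Theorem~\ref{thm:decomp} yields $\Phi^*\alpha \in \partial J(x)$, so Proposition~\ref{prop:foc}(i) gives that $x$ solves~\eqref{eq:reg}. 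The strict inequality is what is needed to lift $\Phi^*\alpha$ into $\ri \partial J(x)$, after which Corollary~\ref{prop:uniqueness-topo} together with the assumption~\eqref{eq:injT} closes the uniqueness argument.

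Part (ii) proceeds identically, with $\alpha$ now being the certificate supplied by the hypothesis: its two defining conditions again coincide with the ones characterizing $\ri \partial J(x)$ through Theorem~\ref{thm:decomp}, Proposition~\ref{prop:foc}(ii) gives that $x$ solves~\eqref{eq:reg-noiseless}, and Corollary~\ref{prop:uniqueness-topo} closes the argument.

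The one step requiring care is the characterization
\begin{equation*}
  \ri \partial J(x) = \bigl\{\eta \in \RR^N : \eta_{\T_x} = e_x,\ \antigx\bigl(\proj_{\S_x}(\eta - f_x)\bigr) < 1\bigr\}.
\end{equation*}
This will follow by writing $\partial J(x) - f_x = \{\zeta \in \S_x : \antigx(\zeta) \leq 1\}$ (using $f_x \in \bar\S_x$, which is implied by $f_x \in \ri \partial J(x)$), recognizing the right-hand side as the unit ball of a finite-valued coercive gauge on $\S_x$ via Proposition~\ref{prop:anti-coer}, noting that its relative interior in $\S_x$ is the open unit ball, and translating back by $f_x$. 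I expect this bit of relative-interior bookkeeping to be the only genuinely technical point; the rest of the theorem then follows by direct substitution into the already-established Proposition~\ref{prop:foc} and Corollary~\ref{prop:uniqueness-topo}.
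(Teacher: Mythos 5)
Your proposal is correct and follows essentially the same route as the paper's proof: the same candidate certificate $\alpha=(y-\Phi x)/\lambda$, the same translation of the strict gauge inequality into $\Phi^*\alpha\in\ri\partial J(x)$ via Theorem~\ref{thm:decomp}, and the same conclusion through Corollary~\ref{prop:uniqueness-topo}. The only difference is that you spell out the relative-interior bookkeeping (via the coercivity of $\antigx$ on $\S_x$ from Proposition~\ref{prop:anti-coer}) that the paper states without elaboration.
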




\section{Partly Smooth Functions Relative to a Linear Manifold}
\label{sec:stable}

Until now, except of being convex and finite-valued (i.e. full domain), no other assumption was imposed on the regularizer $\J$. But, toward the goal of studying robust recovery by solving~\eqref{eq:reg}, more will be needed. This is the main reason underlying the introduction of a subclass of finite-valued convex function $\J$ for which the mappings $x \mapsto e_x$, $x \mapsto \proj_{\S_x}(f_x)$ and $x \mapsto \antig_{f_x}$ exhibit local regularity.

\subsection{Partly Smooth Functions}
\label{sec:stable-psg}

The notion of ``partly smooth'' functions~\cite{lewis2002active} unifies many non-smooth functions known in the literature.
Partial smoothness (as well as identifiable surfaces~\cite{wright1993ident}) captures essential features of the geometry of non-smoothness which are along the so-called ``active/identifiable manifold''.
Loosely speaking, a partly smooth function behaves smoothly as we move on the partial smoothness manifold, and sharply if we move normal to the manifold.
In fact, the behaviour of the function and of its minimizers (or critical points) depend essentially on its restriction to this manifold, hence offering a powerful framework for sensitivity analysis theory.
In particular, critical points of partly smooth functions move stably on the manifold as the function undergoes small perturbations~\cite{lewis2002active,lewis2013partial}.

Specialized to finite-valued convex functions, the definition of partly smooth functions reads as follows. 

\begin{defn}\label{defn:psg}
  A finite-valued convex function $J$ is said to be \emph{partly smooth} at $x$ relative to a set $\Mm \subseteq \RR^N$ if
  \begin{enumerate}
  \item \textbf{Smoothness.}
    $\Mm$ is a $\Cdeux$-manifold around $x$ and $J$ restricted to $\Mm$ is $\Cdeux$ around x.
  \item \textbf{Sharpness.}
    The tangent space of $\Mm$ at $x$ is the model space $T_x$,
    \begin{equation*}
      \tgtManif{x}{\Mm} = T_x .
    \end{equation*}
  \item \textbf{Continuity.}
    The set-valued mapping $\partial J$ is continuous at $x$ relative to $\Mm$.
  \end{enumerate}
  The manifold $\Mm$ is coined the \emph{model manifold} of $x \in \RR^N$.
  $J$ is said to be \emph{partly smooth relative to a set $\Mm$} if $\Mm$ is a manifold and $J$ is partly smooth at each point $x \in \Mm$ relative to $\Mm$.
\end{defn}

Since $J$ is proper convex and finite-valued, the subdifferential of $\partial \J(x)$ is everywhere non-empty and compact and every subgradient is regular.
Therefore, the Clarke regularity property~\cite[Definition~2.7(ii)]{lewis2002active} is automatically verified.
In view of~\cite[Proposition~2.4(i)-(iii)]{lewis2002active}, our sharpness property is equivalent to that of~\cite[Definition~2.7(iii)]{lewis2002active}.
Obviously, any smooth function $\J: \RR^N \to \RR$ is partly smooth relative to the manifold $\RR^N$.
Moreover, if $\Mm$ is a manifold around $x$, the indicator function $\iota_\Mm$ is partly smooth at $x$ relative to $\Mm$.
Remark that in the previous definition, $\Mm$ needs only to be defined locally around $x$, and it can be shown to be locally unique, see~\cite[Corollary~ 4.2]{HareLewis04}. Hence the notation $\Mm$ is unambiguous.

\subsection{Partial Smoothness Relative to a Linear Manifold}
\label{sec:stable-gauges}

Many of the partly smooth functions considered in the literature are associated to linear manifolds, i.e. in which case the model subspace is the model manifold $\Mm = T_x$ (see the sharpness property). This class of functions, coined partly smooth functions with linear manifolds, encompasses most of the popular regularizers in signal/image processing, machine learning and statistics. One of course thinks of the $\lun$, $\lun-\ldeux$, $\linf$ norms, their composition by a linear operator, and/or positive combinations of them, to name a few. However, this family of regularizers does not include the nuclear norm, whose manifold is obviously not linear.In the sequel, we restrict our attention to the class functions $\J$ which are finite-valued convex and partly smooth at $x$ with respect to $T_x$.

When the continuity property (Definition~\ref{defn:psg}(iii)) of the set-valued mapping $\partial \J$ is strenghthned to Lipschitz-continuity, it turns out that one can quantify precisely the local regularity of the mappings $x \mapsto e_x$, $x \mapsto \proj_{\S_x}(f_x)$ and $x \mapsto \antigx$. This is formalized as follows.

\begin{thm}\label{thm:lip-prg}
  Let $\Gamma$ be any gauge which is finite and coercive on $T_x$ for $x \in \RR^N$.
  Let $J$ be a partly smooth function at $x$ relative to $T_x$, and assume that $\partial J : \RR^N \rightrightarrows \RR^N$ is Lipschitz-continuous around $x$ relative to $T_x$. Then for any Lipschitz-continuous mapping
  \begin{equation*}
    f:
    \begin{cases}
      T_x &\to \RR^N \\
      \tilde x &\mapsto f_{\tilde x} \in \ri \partial J(\tilde x) ,
    \end{cases}
  \end{equation*}
  there exist four non-negative reals $\nu_x,\mu_x,\tau_x,\xi_x$ such that
  \begin{gather}
    \forall x' \in T, \certCo{x - x'} \leq \nu_x \Rightarrow T_x = T_{x'} \label{eq:lip-nu}
  \end{gather}
  and for every $x' \in \T$ with $\certCo{x - x'} < \nu_x$, one has
  \begin{align}
    \certCo{e_x - e_{x'}} &\leq \mu_x \certCo{x-x'} , \label{eq:lip-mu} \\
    \antigx(\proj_\S(f_x - f_{x'})) &\leq \tau_x \certCo{x-x'} , \label{eq:lip-tau} \\
    \sup_{\substack{u \in \S \\ u \neq 0}} \frac{\antigxp(u) - \antigx(u)}{\antigx(u)}
                          &\leq
                            \xi_x \certCo{x - x'} \label{eq:lip-xi} .
  \end{align}
  Moreover, the mapping $f$ always exists.
\end{thm}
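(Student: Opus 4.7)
First I would dispose of the last assertion by constructing $f$ via the Steiner-point map $C \mapsto s(C)$: since $\partial J$ is Hausdorff-Lipschitz on $T_x$ near $x$, and since $s(C) \in \ri C$ with $s$ itself Lipschitz in the Hausdorff metric on compact convex sets, the composition $\tilde x \mapsto s(\partial J(\tilde x))$ produces a Lipschitz selection $f_{\tilde x} \in \ri \partial J(\tilde x)$ with some constant $L_f$. To obtain~\eqref{eq:lip-nu}, I invoke the sharpness axiom of partial smoothness together with the fact that the model manifold $\Mm = T_x$ is a linear subspace: the tangent space of $\Mm$ at every $x' \in T_x$ is then again $T_x$, and continuity of $\partial J$ along $T_x$ together with $T_{x'} = \Lin(\partial J(x'))^\perp$ (Definition~\ref{defn:linmod}) gives $T_{x'} = T_x$ for every $x' \in T_x$ with $\certCo{x - x'}$ smaller than a suitably chosen $\nu_x$. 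In particular $\S_{x'} = \S_x$ on this neighbourhood, which is exactly what is needed even to state the last three inequalities.

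\textbf{Linear estimates~\eqref{eq:lip-mu} and~\eqref{eq:lip-tau}.} Both reduce to the Lipschitz behaviour of $f$ and finite-dimensional equivalence of norms. By Proposition~\ref{prop:convex-basics-decompos}(ii), every element of $\partial J(x)$ (in particular $f_x$) has $T_x$-part equal to $e_x$; applying the same at $x'$ and using $T_{x'}=T_x$ yields $e_x - e_{x'} = \proj_{T_x}(f_x - f_{x'})$, which is controlled by $L_f \|x-x'\|$ and hence by $\mu_x \certCo{x-x'}$ after exchanging $\|\cdot\|$ for $\certCo{\cdot}$ on the finite-dimensional subspace $T_x$. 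For~\eqref{eq:lip-tau}, Proposition~\ref{prop:anti-coer} shows that $\antigx$ is finite and coercive on $\S_x$, hence comparable to the Euclidean norm there; composing this comparison with the contraction $\proj_{\S_x}$ and with $\|f_x - f_{x'}\| \le L_f\|x-x'\|$ delivers $\antigx(\proj_{\S_x}(f_x - f_{x'})) \le \tau_x \certCo{x-x'}$.

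\textbf{Multiplicative gauge comparison~\eqref{eq:lip-xi}.} Set $C_y := \partial J(y) - f_y \subset \S_x$ for $y = x, x'$; each $C_y$ is compact convex with $0 \in \ri C_y$, and $\antigx = \gauge_{C_x}$, $\antigxp = \gauge_{C_{x'}}$. The Hausdorff-Lipschitz property of $\partial J$ together with the Lipschitz control on $f$ yields $d_H(C_x, C_{x'}) \le L''\,\|x-x'\|$ for some constant $L''$. Since $0 \in \ri C_x$, there exists $r > 0$ with $B_{\S_x}(0,r) \subseteq C_x$, and by Hausdorff continuity $B_{\S_x}(0,r/2) \subseteq C_{x'}$ once $x'$ is close enough to $x$. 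The Hausdorff bound then gives the inclusion $C_x \subseteq C_{x'} + L''\|x-x'\|\, B_{\S_x}(0,1) \subseteq \bigl(1 + 2 L''\|x-x'\|/r\bigr)\, C_{x'}$, and passing to gauges (using $\gauge_{\lambda C}(u) = \lambda^{-1} \gauge_C(u)$ for $\lambda > 0$) produces $\antigxp(u) \le \bigl(1 + \xi_x \|x-x'\|\bigr)\antigx(u)$ with $\xi_x = 2L''/r$, which rearranges into~\eqref{eq:lip-xi} after converting to $\certCo{\cdot}$ as before.

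\textbf{Main obstacle.} The delicate point is this last multiplicative estimate, precisely the need to control the inscribed-ball radius of $C_{x'}$ inside $\S_x$ uniformly in $x'$. This is where the Lipschitz (rather than merely continuous) hypothesis on $\partial J$ and the fact that $\S_{x'} = \S_x$ on a whole neighbourhood both become indispensable: without them one cannot guarantee that the comparison is carried out inside a fixed ambient subspace, much less with uniform inradius for $C_{x'}$, and the constant $\xi_x$ would blow up.
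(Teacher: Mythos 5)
Your proof is correct, and its overall architecture (Steiner selection, local stability of sharpness for $\nu_x$, Lipschitz selection for $\tau_x$, Hausdorff control of $\partial J(\cdot)-f_{(\cdot)}$ for $\xi_x$) coincides with the paper's. Two of the four estimates are, however, obtained by genuinely different local arguments, and in both cases your route is arguably cleaner. For \eqref{eq:lip-mu} the paper does not use the selection $f$ at all: it expands $J(x+th)=J(x)+t\dotp{e_x}{h}+o(t)$ for $h\in T_x$ and invokes the $\Cdeux$ smoothness of $J$ restricted to $T_x$ to conclude that $z\mapsto e_z$ is $\Calt{1}$, hence locally Lipschitz; you instead write $e_y=\proj_{T_y}(f_y)$ (valid by Proposition~\ref{prop:convex-basics-decompos}(ii)) and use $T_{x'}=T_x$ to get $e_x-e_{x'}=\proj_{T_x}(f_x-f_{x'})$, so that $\mu_x$ is controlled by the Lipschitz constant of $f$. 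Your version needs only the Lipschitz continuity of $\partial J$ (through $f$), while the paper's exploits the smoothness axiom and yields a constant independent of the chosen selection. For \eqref{eq:lip-xi} the paper first proves an \emph{additive} perturbation bound $\antigxp(u)-\antigx(u)\leq \beta\norm{x-x'}\norm{u}$ (Lemma~\ref{lem:polar-lip}, via polarity and the gauge of a Minkowski sum) and then converts it to the multiplicative form using the coercivity of $\antigx$ on $\S_x$, i.e. $\norm{u}\leq \normOP{\Id}{\ldeux}{\antigx}\antigx(u)$; you go directly to the multiplicative inclusion $C_x\subseteq (1+2L''\norm{x-x'}/r)\,C_{x'}$ via the relative inradius $r$ of $C_x$ in $\S_x$. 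These are two faces of the same coin — the paper's operator bound $\normOP{\Id}{\ldeux}{\antigx}$ is essentially your $1/r$ — but your set-dilation argument makes explicit where the hypothesis $0\in\ri(\partial J(x)-f_x)$ enters, which is exactly the obstacle you flag. One small point to tighten: in your $\nu_x$ step, the sharpness axiom as stated in Definition~\ref{defn:psg} holds only \emph{at} $x$, so deducing $T_{x'}=\tgtManif{x'}{T_x}=T_x$ for nearby $x'\in T_x$ requires the local persistence of partial smoothness (the paper cites \cite[Proposition~2.10]{lewis2002active} for precisely this); your phrasing assumes sharpness at $x'$ without justifying it, though the fix is exactly that citation.
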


This result motivates the following definition.

\begin{defn}\label{def:PRG}
The set of finite-valued convex and partly smooth functions at $x$ relative to $T_x$, such that $\partial J$ is Lipschitz around $x$ relative to $T_x$, with parameters $(\Gamma,f_x,\nu_x,\mu_x,\tau_x,\xi_x)$, is denoted $\psfl{x}{\Gamma,f_x,\nu_x,\mu_x,\tau_x,\xi_x}$.
\end{defn}

\subsection{Operations Preserving Partial Smoothness Relative to a Linear Manifold}
\label{sec:stable-algebraic}

The set $\PSFL_x$ is closed under addition and pre-composition by a linear operator.

\subsubsection{Addition}
\label{sec:alge-sum}

The following proposition determines the model subspace and the subdifferential gauge of the sum of two functions 
\begin{equation*}
  H = J + G
\end{equation*}
in terms of those associated to $J$ and $G$. 

\begin{prop}\label{prop:sum-dec}
  Let $J$ and $G$ be two finite-valued convex functions. Denote $\TtJ$ and $e_J$ (resp. $\TtG$ and $e_G$) the model subspace and vector at a point $x$ corresponding to $J$ (resp. $G$).
  Then the subdifferential of $H$ has the decomposability property with
  \begin{enumerate}[(i)]
  \item $\TtH = \TtJ \cap \TtG$, or equivalently $\SsH = (\TtH)^\bot = \Span\pa{ \SsJ \cup \SsG}$.
  \item $e_H = \proj_{\TtH}(e_J+e_G)$.
  \item Moreover, let $\AntiG{J}$ and $\AntiG{G}$ denote the subdifferential gauges for the pairs $(J, \f_{x}^{J}\in \ri \partial J(x))$ and $(G, \f_{x}^{G}\in \ri \partial G(x))$, correspondingly.
  Then, for the particular choice of
  \begin{equation*}
    f_{x}^{H} = f_{x}^{J}+ f_{x}^{G}
  \end{equation*}
  we have $f_{x}^{H} \in \ri \partial H(x)$, and for a given $\eta \in \SsH$, the subdifferential gauge of $H$ reads
  \[
  \AntiG{H} (\eta) = \inf_{\eta_1+\eta_2=\eta} \max(\AntiG{J}(\eta_1),\AntiG{G}(\eta_2)) ~.
  \]
  \end{enumerate}
\end{prop}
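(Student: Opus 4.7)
The plan is to derive everything from the Moreau--Rockafellar sum rule: since $J$ and $G$ are finite-valued convex, they have full domain, so $\partial H(x) = \partial J(x) + \partial G(x)$, where the right-hand side is a Minkowski sum of non-empty compact convex sets. Every claim in the proposition will be a transparent consequence of this identity combined with standard calculus of affine hulls, linear hulls, relative interiors of sums, and gauges of Minkowski sums of closed convex sets containing the origin.

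For (i), I would first observe that taking affine hulls commutes with Minkowski sums of non-empty convex sets, hence
\[
\bar{S}_x^H = \Aff \partial H(x) = \Aff \partial J(x) + \Aff \partial G(x) = \bar{S}_x^J + \bar{S}_x^G.
\]
Subtracting a point from each side shows that $S_x^H = \Lin \partial H(x) = S_x^J + S_x^G = \Span(S_x^J \cup S_x^G)$. Taking orthogonal complements then yields $T_x^H = (S_x^J)^\perp \cap (S_x^G)^\perp = T_x^J \cap T_x^G$, which is (i). For (ii), invoking Proposition~\ref{prop:convex-basics-decompos}(ii) reduces the claim to identifying the common value of $\eta_{T_x^H}$ for $\eta\in\bar S_x^H$. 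Writing such an $\eta$ as $\eta_J+\eta_G$ with $(\eta_J)_{T_x^J}=e_x^J$ and $(\eta_G)_{T_x^G}=e_x^G$, and using the inclusions $T_x^H\subseteq T_x^J$ and $T_x^H\subseteq T_x^G$, I would compute
\[
\eta_{T_x^H} = \proj_{T_x^H}(\eta_J) + \proj_{T_x^H}(\eta_G) = \proj_{T_x^H}(e_x^J + e_x^G),
\]
which, by the characterisation of $\bar S_x^H$, equals $e_x^H$.

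For (iii), the membership $f_x^J + f_x^G \in \ri \partial H(x)$ follows from the standard identity $\ri(A+B) = \ri A + \ri B$ for non-empty convex sets $A,B$ (applied to $A=\partial J(x)$, $B=\partial G(x)$). For the gauge formula, the Minkowski translation gives
\[
\partial H(x) - f_x^H = \bigl(\partial J(x) - f_x^J\bigr) + \bigl(\partial G(x) - f_x^G\bigr),
\]
and both summands are closed convex sets containing the origin. It then remains to use the elementary fact that for any two such sets $C_1, C_2$ one has
\[
\gauge_{C_1+C_2}(\eta) = \inf_{\eta_1+\eta_2=\eta}\max\bigl(\gauge_{C_1}(\eta_1),\gauge_{C_2}(\eta_2)\bigr),
\]
which I would justify by writing $\eta \in \lambda(C_1+C_2)$ iff there exist $\eta_1=\lambda c_1, \eta_2=\lambda c_2$ with $\eta_1+\eta_2=\eta$ and $\max(\gauge_{C_1}(\eta_1),\gauge_{C_2}(\eta_2))\leq \lambda$, then optimising over $\lambda$. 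Applied to the identity above this yields the asserted formula for $\AntiG{H}$ on $S_x^H$.

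The only mildly delicate step is ensuring that all the set-algebra manipulations (affine hull, linear hull, relative interior, gauge) genuinely distribute over Minkowski sums in our setting; everything else is bookkeeping. The relative-interior identity requires that both subdifferentials be non-empty, which is automatic here since $J$ and $G$ are finite-valued. Likewise, the gauge-of-sum identity needs the summands to contain $0$, which is precisely the role of the centring by $f_x^J$ and $f_x^G\in\ri\partial J(x), \ri\partial G(x)$. With these mild checks in place, the three assertions follow immediately.
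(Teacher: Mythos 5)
Your proposal is correct and follows the same skeleton as the paper's proof: the sum rule $\partial H(x)=\partial J(x)+\partial G(x)$ for finite-valued convex functions, set algebra on the hulls for (i), the orthogonality/projection computation for (ii) (your observation that $\proj_{T_x^H}(\eta_J)=\proj_{T_x^H}(e_x^J)$ because $T_x^H\subseteq T_x^J=(S_x^J)^\perp$ is exactly the mechanism the paper uses), and Rockafellar's identity $\ri(A+B)=\ri A+\ri B$ for the membership $f_x^H\in\ri\partial H(x)$. The one genuine divergence is how you justify the gauge-of-Minkowski-sum identity $\gamma_{C_1+C_2}(\eta)=\inf_{\eta_1+\eta_2=\eta}\max(\gamma_{C_1}(\eta_1),\gamma_{C_2}(\eta_2))$. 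The paper routes this through Lemma~\ref{lem:gaugesum}, whose proof passes to polars and support functions via $\sigma_{C_1+C_2}=\sigma_{C_1}+\sigma_{C_2}$, derives a sup--inf-convolution representation, and then swaps $\sup$ and $\inf$ using a minimax theorem; this forces an extra continuity/finiteness hypothesis which the paper must then verify on $S_x^J\cap(S_x^G+\eta)$ when invoking the lemma inside the proof of the proposition. Your argument instead works directly from the definition of the gauge, using only that $\gamma_{C_i}(\eta_i)\leq\lambda\iff\eta_i\in\lambda C_i$ for closed convex $C_i\ni 0$ and $\lambda>0$, plus an interchange of iterated infima over a joint feasible set. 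This is more elementary and unconditional (no continuity check, no attainment of the infimum, no minimax), at the cost of not producing the more general sup--inf-convolution formula that the paper's lemma also records and reuses elsewhere (e.g.\ in Proposition~\ref{prop:convex-basics-decompos-gauge} and Lemma~\ref{lem:polar-lip}). The remaining step you flag --- that affine hulls distribute over Minkowski sums --- does hold for arbitrary non-empty sets, so your proof is complete.
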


Armed with this result, we show the following.
\begin{prop}\label{prop:sum-prg}
Let $x \in \RR^N$. Suppose that
\[
J \in \psfl{x}{\certCof^J,\nu_x^J,\mu_x^J,\tau_x^J,\xi_x^J} \qandq G \in \psfl{x}{\certCof^G, \nu_x^G,\mu_x^G,\tau_x^G,\xi_x^G}.
\] 
Then, for the choice $f_{x}^{H} = f_{x}^{J}+ f_{x}^{G}$ and $\certCof^H = \max (\certCof^J,\certCof^G)$, we have
\[
H=J+G \in \psfl{x}{\certCof^H, \nu_x^H,\mu_x^H,\tau_x^H,\xi_x^H}
\] 
with  
  \begin{align*}
    &\nu_x^H = \min(\nu_x^J, \nu_x^G) \\
    &\mu^H_x = \mu_x^J \, \normOP{\proj_{\TtH}}{\certCof^J} {\certCof^H} + \mu_x^G\, \normOP{\proj_{\TtH}}{\certCof^G} {\certCof^H}\\
    &\tau_x^H = \tau^J_x+\tau^G_x+ \mu^J_x \,  \normOP{\proj_{\SsH \cap \TtJ}}{\certCof^J}{\AntiG{H}} + \mu^G_x \,  \normOP{\proj_{\SsH \cap \TtG}}{\certCof^G}{\AntiG{H}} \\ 
    &\xi_x^H = \max (\xi_x^J, \xi_x^G) .
  \end{align*}
\end{prop}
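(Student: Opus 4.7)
My plan is to handle the five claims in order, starting from the structural data for $H$ furnished by Proposition~\ref{prop:sum-dec} and then deriving each explicit Lipschitz constant by transporting the corresponding estimate for $J$ and $G$ obtained via Theorem~\ref{thm:lip-prg}. From Proposition~\ref{prop:sum-dec} I record: $T_x^H=T_x^J\cap T_x^G$, $S_x^H=S_x^J+S_x^G$, $e_x^H=\proj_{T_x^H}(e_x^J+e_x^G)$, $f_x^H=f_x^J+f_x^G\in\ri\partial H(x)$, and the inf-convolution formula
\[
\AntiG{H}(\eta)=\inf\bigl\{\max(\AntiG{J}(\eta_1),\AntiG{G}(\eta_2)):\eta_1+\eta_2=\eta,\ \eta_1\in S_x^J,\ \eta_2\in S_x^G\bigr\}
\]
for every $\eta\in S_x^H$. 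Partial smoothness of $H$ at $x$ relative to the linear manifold $T_x^H$ is then essentially automatic: $J|_{T_x^H}$ and $G|_{T_x^H}$ are $\Cdeux$ because $T_x^H\subseteq T_x^J,T_x^G$; sharpness holds since the tangent to a subspace is the subspace itself and coincides with the model subspace of $H$ by Proposition~\ref{prop:sum-dec}(i); continuity and Lipschitz continuity of $\partial H=\partial J+\partial G$ follow from the convex sum rule, valid without qualification since $J,G$ are finite-valued. Theorem~\ref{thm:lip-prg} thus applies to $H$ with the chosen $f_x^H$ and $\certCof^H=\max(\certCof^J,\certCof^G)$, which is itself finite and coercive on $T_x^H$; only the explicit constants remain to be produced.

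For $\nu_x^H$, since $\certCof^H\geq\certCof^J,\certCof^G$ the assumption $\certCof^H(x-x')<\min(\nu_x^J,\nu_x^G)$ yields $\certCof^J(x-x')<\nu_x^J$ and $\certCof^G(x-x')<\nu_x^G$, whence $T_{x'}^J=T_x^J$, $T_{x'}^G=T_x^G$, and $T_{x'}^H=T_x^H$. For $\mu_x^H$, because $T_{x'}^H=T_x^H$ the same projector applies at both points, so
\[
e_{x'}^H-e_x^H=\proj_{T_x^H}\bigl((e_{x'}^J-e_x^J)+(e_{x'}^G-e_x^G)\bigr),
\]
and subadditivity of $\certCof^H$, the operator-norm definition, the $\mu^J,\mu^G$ bounds, and $\certCof^J,\certCof^G\leq\certCof^H$ combine to give the stated formula. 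For $\xi_x^H$, fix $u\in S_x^H\setminus\{0\}$ and $\varepsilon>0$, pick a decomposition $u=u_1+u_2$ with $u_1\in S_x^J$, $u_2\in S_x^G$ that is $\varepsilon$-optimal in the inf-convolution for $\AntiG{H}(u)$ at $x$; the same splitting remains feasible at $x'$ (the subspaces agree), so applying the $\xi^J,\xi^G$-bounds to each factor, taking the maximum, invoking $\certCof^J,\certCof^G\leq\certCof^H$, and letting $\varepsilon\downarrow 0$ yields $\xi_x^H=\max(\xi_x^J,\xi_x^G)$.

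The delicate step is $\tau_x^H$. Subadditivity of $\AntiG{H}$ gives
\[
\AntiG{H}\bigl(\proj_{S_x^H}(f_{x'}^H-f_x^H)\bigr)\leq \AntiG{H}\bigl(\proj_{S_x^H}(f_{x'}^J-f_x^J)\bigr)+\AntiG{H}\bigl(\proj_{S_x^H}(f_{x'}^G-f_x^G)\bigr).
\]
I split the $J$-summand along the orthogonal decomposition $\RR^N=T_x^J\oplus S_x^J$: writing $f_{x'}^J-f_x^J=(e_{x'}^J-e_x^J)+v^J$ with $v^J:=\proj_{S_x^J}(f_{x'}^J-f_x^J)\in S_x^J$ and $e_{x'}^J-e_x^J\in T_x^J$. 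Because $T_x^J$ decomposes orthogonally as $T_x^H\oplus(T_x^J\cap S_x^H)$, projecting the $e$-term onto $S_x^H$ coincides with projecting it onto $S_x^H\cap T_x^J$, while $v^J$ is unchanged by $\proj_{S_x^H}$ since $S_x^J\subseteq S_x^H$. The $v^J$-piece is controlled by the trivial splitting $\eta_1=v^J,\eta_2=0$ in the inf-convolution, giving $\AntiG{H}(v^J)\leq\AntiG{J}(v^J)\leq\tau_x^J\certCof^J(x-x')$ via the $\tau^J$-bound; the $e$-piece is bounded by $\mu_x^J\normOP{\proj_{S_x^H\cap T_x^J}}{\certCof^J}{\AntiG{H}}\,\certCof^J(x-x')$ via the $\mu^J$-bound and the operator-norm definition. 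Adding the symmetric $G$-contribution and using $\certCof^J,\certCof^G\leq\certCof^H$ produces exactly the claimed expression.

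I expect this $\tau$-estimate to be the main obstacle, for two linked reasons. First, $\AntiG{J}$ is $+\infty$ on generic elements of $S_x^H\setminus S_x^J$, so $\AntiG{H}$ cannot be dominated by a simple maximum of $\AntiG{J}$ and $\AntiG{G}$; the remedy is to pre-split $f_{x'}^\bullet-f_x^\bullet$ along $T_x^\bullet\oplus S_x^\bullet$ and then invoke the trivial splitting in the inf-convolution on the $S_x^\bullet$-part. Second, the residual $e$-difference lives in $T_x^J$ and must therefore contribute through the projection onto $S_x^H\cap T_x^J$, rather than onto $S_x^H$ or $T_x^J$ separately, and recognizing this is exactly what forces the operator norms in the statement to take the form $\normOP{\proj_{S_x^H\cap T_x^J}}{\certCof^J}{\AntiG{H}}$ and $\normOP{\proj_{S_x^H\cap T_x^G}}{\certCof^G}{\AntiG{H}}$.
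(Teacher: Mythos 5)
Your proposal is correct and follows essentially the same route as the paper's proof: the same reduction of $\nu$, $\mu$, $\xi$ to the individual bounds via $\certCof^J,\certCof^G\leq\certCof^H$, and for $\tau$ the same decomposition $f^\bullet_x-f^\bullet_{x'}=\proj_{\SsJ}(\cdot)+(e^\bullet_x-e^\bullet_{x'})$, the trivial splitting in the inf-convolution to dominate $\AntiG{H}$ by $\AntiG{J}$ on $\SsJ$, and the identification $\proj_{\SsH}(e^J_x-e^J_{x'})=\proj_{\SsH\cap\TtJ}(e^J_x-e^J_{x'})$ that produces the stated operator norms. Your $\varepsilon$-optimal splitting in the $\xi$-step is a minor (and safe) variant of the paper's use of an exact minimizer.
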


\subsubsection{Smooth perturbation} 

It is common in the litterature to find regularizers of the form $J_\epsilon(x) = J(x) + \frac{\epsilon}{2} \norm{x}_2^2$, such as the Elastic net ~\cite{zou2005elastic} . More generally, we consider any smooth perturbation of $J$. The following is a straighforward consequence of Proposition~\ref{prop:sum-dec}.
\begin{cor}\label{cor:sum-smooth-sum}
  Let $J$ be a finite-valued convex function, $x \in \RR^N$ and $G$ a convex function which is G\^ateaux-differentiable at $x$.
  Then,
  \begin{equation*}
    T_x^{J+G} = T^{J} \qandq e_x^{J+G} = e_x^{J} + P_{T_x^{J}} \nabla G(x) .
  \end{equation*}
  Moreover, for the particular choice of
  \begin{equation*}
    f_{x}^{J+G} = f_x^{J} + \nabla G(x),
  \end{equation*}
  we have $f_{x}^{J+G} \in \ri (J+G)(x)$ and for a given $\eta \in S_x^J$, the subdifferential gauge of $J + G$ reads
  \begin{equation*}
    (J+G)_{f_x^{J+G},x}^{x,\circ}(\eta) = J_{f_x^J,x}^{x,\circ}(\eta) .
  \end{equation*}
\end{cor}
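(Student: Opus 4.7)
The plan is to recognize this corollary as a degenerate case of Proposition~\ref{prop:sum-dec}, in which the second summand $G$ has a singleton subdifferential. Concretely, since $G$ is G\^ateaux-differentiable at $x$, we have $\partial G(x)=\{\nabla G(x)\}$, so all of the decomposition objects attached to $G$ collapse: $\bar S_x^{G}=\{\nabla G(x)\}$, $e_x^{G}=\nabla G(x)$, $S_x^{G}=\{0\}$, $T_x^{G}=\RR^N$, and $\ri\partial G(x)=\{\nabla G(x)\}$. The whole proof amounts to plugging these trivial values into the three assertions of Proposition~\ref{prop:sum-dec}.

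For the subspace and the $e$-vector, assertion (i) of that proposition gives $T_x^{H}=T_x^{J}\cap T_x^{G}=T_x^{J}$, and assertion (ii) yields $e_x^{H}=\proj_{T_x^{J}}(e_x^{J}+\nabla G(x))$; using $e_x^{J}\in T_x^{J}$ from Proposition~\ref{prop:convex-basics-decompos}(i), this simplifies to $e_x^{J}+\proj_{T_x^{J}}\nabla G(x)$, proving the first displayed equation. Since $\nabla G(x)\in\ri\partial G(x)$ trivially, assertion (iii) of Proposition~\ref{prop:sum-dec} applies with $f_x^{G}=\nabla G(x)$ and delivers $f_x^{J+G}=f_x^{J}+\nabla G(x)\in\ri\partial(J+G)(x)$.

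For the subdifferential-gauge identity, Proposition~\ref{prop:sum-dec}(iii) expresses the gauge of $H=J+G$ at $\eta\in S_x^{H}=S_x^{J}$ as the infimal convolution
\begin{equation*}
  (J+G)_{f_x^{J+G},x}^{x,\circ}(\eta)
  = \inf_{\eta_1+\eta_2=\eta} \max\bigl(J_{f_x^{J},x}^{x,\circ}(\eta_1),\, G_{f_x^{G},x}^{x,\circ}(\eta_2)\bigr).
\end{equation*}
By Proposition~\ref{prop:anti-coer} applied to $G$, the gauge $G_{f_x^{G},x}^{x,\circ}$ has effective domain $S_x^{G}=\{0\}$, so the max is finite only when $\eta_2=0$. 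The unique admissible decomposition is therefore $(\eta_1,\eta_2)=(\eta,0)$, which reduces the right-hand side to $J_{f_x^{J},x}^{x,\circ}(\eta)$, yielding the claim.

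The argument is essentially mechanical once one correctly identifies the degenerate objects attached to $G$; the only mildly delicate point is the last step, where one must invoke coercivity of the subdifferential gauge on its $S$-subspace (Proposition~\ref{prop:anti-coer}) to conclude that $G$'s contribution to the infimal convolution forces $\eta_2=0$. No new estimate needs to be established.
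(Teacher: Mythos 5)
Your proof is correct and follows essentially the same route as the paper: both reduce the corollary to Proposition~\ref{prop:sum-dec} with the degenerate data $\partial G(x)=\{\nabla G(x)\}$, $T_x^{G}=\RR^N$, $S_x^{G}=\{0\}$. The only cosmetic difference is in the last step, where the paper simply observes that $G$ translates $\partial J(x)$ by $\nabla G(x)$ so that $\partial(J+G)(x)-f_x^{J+G}=\partial J(x)-f_x^{J}$, whereas you obtain the same conclusion by collapsing the infimal convolution of Proposition~\ref{prop:sum-dec}(iii) using $\dom G_{f_x^{G},x}^{x,\circ}=\{0\}$; both are valid.
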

Hence, the model subspace $T_x$ and the subdifferential gauge are insensitive to smooth perturbations.
Combining Proposition~\ref{prop:sum-prg} and Corollary~\ref{cor:sum-smooth-sum} yields the partial smoothness Lipschitz constants of smooth perturbation.
\begin{cor}\label{cor:sum-smooth-prg}
Let $x \in \RR^N$. Suppose that $J \in \psfl{x}{\certCof^J,\nu_x^J,\mu_x^J,\tau_x^J,\xi_x^J}$, that $G$ is $\Cdeux$ on $\RR^N$ with a $\beta$-Lipschitz gradient. Then for the choice $f_{x}^{H} = f_x^{J} + \nabla G(x)$ and $\certCof^H = \max (\certCof^J,\norm{\cdot})$, $H=J+G \in \psfl{x}{\certCof^H, \nu_x^H,\mu_x^H,\tau_x^H,\xi_x^H}$ with  
  \begin{gather*}
    \nu_x^H = \nu_x^J, \quad
    \mu^H_x = \mu_x^J \, \normOP{\proj_{\TtJ}}{\certCof^J}{\certCof^H} + \beta \, \normOP{\proj_{\TtJ}}{\ldeux} {\certCof^H},  \\
    \tau_x^H = \tau^J_x, \quad
    \xi_x^H = \xi_x^J . 
  \end{gather*}
\end{cor}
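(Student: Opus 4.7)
The plan is to deduce this corollary by combining Proposition~\ref{prop:sum-prg} (sum of two partly smooth functions) with the sharper identities established in Corollary~\ref{cor:sum-smooth-sum} (smooth perturbation), after first recognizing the smooth function $G$ as a trivially partly smooth function with respect to the full space.

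First, I would observe that any $C^2$ function $G$ with $\beta$-Lipschitz gradient belongs to $\psfl{x}{\certCof^G, \nu_x^G, \mu_x^G, \tau_x^G, \xi_x^G}$ relative to the manifold $\Mm = \RR^N$, with the explicit constants read off by inspection. Indeed, $\partial G(x) = \{\nabla G(x)\}$ forces $T_x^G = \RR^N$, $S_x^G = \{0\}$, and the unique choice $f_x^G = e_x^G = \nabla G(x)$. Taking $\certCof^G = \norm{\cdot}_2$, which is coercive on $\RR^N$, the conclusions of Theorem~\ref{thm:lip-prg} are elementary: $\nu_x^G = +\infty$ because $T^G_{x'} = \RR^N$ for every $x'$; $\mu_x^G = \beta$ is precisely the Lipschitz constant of the gradient; and $\tau_x^G = \xi_x^G = 0$ follow vacuously since $\proj_{S_x^G} = 0$ and the subdifferential gauge $\AntiG{G}_x$ has trivial domain $\{0\}$.

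Next, I would apply Proposition~\ref{prop:sum-prg} to $H = J + G$ with these constants. The identities $T_x^H = T_x^J \cap \RR^N = T_x^J$, $S_x^H = S_x^J$, and the prescribed $f_x^H = f_x^J + \nabla G(x)$ with $\certCof^H = \max(\certCof^J, \norm{\cdot}_2)$ match both the hypotheses of that proposition and the formulas of Corollary~\ref{cor:sum-smooth-sum}. The constants $\nu_x^H = \min(\nu_x^J, +\infty) = \nu_x^J$ and $\xi_x^H = \max(\xi_x^J, 0) = \xi_x^J$ are immediate, while $\mu_x^H$ is obtained by substituting $\mu_x^G = \beta$, $\certCof^G = \norm{\cdot}_2$, and $\TtH = \TtJ$ into the additive formula of Proposition~\ref{prop:sum-prg}.

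The main obstacle is handling $\tau_x^H$, because the generic bound from Proposition~\ref{prop:sum-prg} involves the operator norms $\normOP{\proj_{S_x^H \cap T_x^J}}{\certCof^J}{\AntiG{H}}$ and $\normOP{\proj_{S_x^H \cap T_x^G}}{\ldeux}{\AntiG{H}}$. The first vanishes since $S_x^J \cap T_x^J = \{0\}$ by orthogonality. For the second, the key leverage is Corollary~\ref{cor:sum-smooth-sum}, which gives the \emph{exact} identity $\AntiG{H}_{x'} = \AntiG{J}_{x'}$ on $S_x^J$ rather than just a bound; combined with $\tau_x^G = 0$ and the explicit form of $f_{x'}^H = f_{x'}^J + \nabla G(x')$, this lets one reduce the sum-formula bound to the clean statement $\tau_x^H = \tau_x^J$. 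The delicate point is book-keeping the contribution of $\proj_{S_x^J}(\nabla G(x) - \nabla G(x'))$ through the equality, rather than the loose subadditive bound used in the general proof of Proposition~\ref{prop:sum-prg}.
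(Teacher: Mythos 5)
Your overall route is the same as the paper's: view $G$ as partly smooth relative to $T_x^G=\RR^N$ with $\certCof^G=\norm{\cdot}$, $\nu_x^G=+\infty$, $\mu_x^G=\beta$, $\tau_x^G=\xi_x^G=0$, then feed these constants into Proposition~\ref{prop:sum-prg}. Your identification of $T_x^G$, $S_x^G=\{0\}$, $f_x^G=e_x^G=\nabla G(x)$ and the resulting $\nu$, $\mu$ and $\xi$ computations coincide with what the paper does and are correct.

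The gap is in your treatment of $\tau_x^H$. You correctly isolate the troublesome term of Proposition~\ref{prop:sum-prg}, namely $\mu_x^G\,\normOP{\proj_{\SsH\cap\TtG}}{\certCof^G}{\AntiG{H}}=\beta\,\normOP{\proj_{\SsJ}}{\ldeux}{\AntiG{H}}$ (since $\SsH\cap\TtG=\SsJ\cap\RR^N=\SsJ$), but the mechanism you invoke to kill it does not work. Corollary~\ref{cor:sum-smooth-sum} gives an identity between the \emph{gauges} $\AntiG{H}=\AntiG{J}$; the obstruction, however, sits in the \emph{argument}: with $f_{x'}^H=f_{x'}^J+\nabla G(x')$ one has
\begin{equation*}
\proj_{\SsH}\big(f_x^H-f_{x'}^H\big)=\proj_{\SsJ}\big(f_x^J-f_{x'}^J\big)+\proj_{\SsJ}\big(\nabla G(x)-\nabla G(x')\big),
\end{equation*}
and the second summand is in general nonzero for $x'\in\TtJ$ (take $G$ quadratic with a Hessian that does not leave $\TtJ$ invariant); no identity between $\AntiG{H}$ and $\AntiG{J}$ makes it disappear. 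What the sum rule actually delivers is $\tau_x^H=\tau_x^J+\beta\,\normOP{\proj_{\SsJ}}{\ldeux}{\AntiG{H}}$. To be fair, the paper's own proof commits the same elision: it writes that $S^G=\{0\}$ forces $\tau_x^G=\xi_x^G=0$ and concludes $\tau_x^H=\tau_x^J$ directly from Proposition~\ref{prop:sum-prg}, silently dropping exactly the term you flagged. So you have put your finger on a genuine soft spot rather than introduced a new error; but your proposal, as written, asserts the reduction to $\tau_x^H=\tau_x^J$ without supplying an argument, and I do not see one that yields it in general.
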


\subsubsection{Pre-composition by a Linear Operator}
\label{sec:alge-precomp}

Convex functions of the form $\J_0 \circ D^*$, where $\J_0$ is a finute-valued convex function, correspond to the so-called analysis-type regularizers.
The most popular example in this class if the total variation where $\J_0$ is the $\lun$ or the $\lun-\ldeux$ norm, and $D^*=\nabla$ is a finite difference discretization of the gradient.

In the following, we denote $\T=\T_x=\S^\bot$ and $e = \e_x$ the subspace and vector in the decomposition of the subdifferential of $\J$ at a given $x \in \RR^N$. Analogously, $\Tz=\Sz^\bot$ and $\ez$ are those of $\J_0$ at $D^* x$. The following proposition details the decomposability structure of analysis-type regularizers.

\begin{prop}\label{prop:analysis-dec-form} 
Let $\J_0$ be a convex finite-valued function. Then the subdifferential of $\J = \J_0 \circ D^*$ has the decomposability property with
  \begin{enumerate}[(i)]
  \item $\T = \Ker( D_{\Sz}^* )$, or equivalently $\S = \Im( D_{\Sz} )$.
  \item $e = \proj_\T D \ez$. 
  \item Moreover, let $\antigxa$ denote the subdifferential gauge for the pair $(\J_{0}, \f_{0,D^* x} \in \ri \partial \J_0(x))$.
  Then, for the particular choice of
  \begin{equation*}
    \f_x = D \f_{0,D^*x}
  \end{equation*}
  we have $f_{x} \in \ri \partial \J(x)$, $\dom \antigx = \S$ and for every $\eta \in \S$
  \[
  \antigx(\eta) = \inf_{z \in \ker(D_{\Sz})} \antigxa(D_{\Sz}^+ \eta + z) ~.
  \]
  The infimum can be equivalently taken over $\ker(D) \cap \Sz$.
  \end{enumerate}
\end{prop}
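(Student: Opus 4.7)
The plan is to leverage the classical chain rule for subdifferentials of compositions with linear maps. Since $\J_0$ is finite-valued convex on $\RR^P$ (hence continuous everywhere), the chain rule yields
\[
\partial \J(x) = D \, \partial \J_0(D^* x).
\]
Every geometric object attached to $\partial \J(x)$ — affine and linear hull, projection of the origin, relative interior, gauge of the recentered set — is then obtained by pushing the corresponding object attached to $\partial \J_0(D^* x)$ through $D$.

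For (i), using that $\Lin$ commutes with linear maps, $\S = \Lin(\partial \J(x)) = D \, \Lin(\partial \J_0(D^* x)) = D \Sz = \Im(D_{\Sz})$. Taking orthogonals, $\T = (D \Sz)^{\bot} = \enscond{z \in \RR^N}{\proj_{\Sz}(D^* z) = 0} = \Ker(D_{\Sz}^*)$. For (ii), note that $D \ez \in D \, \bar \S_{0,D^* x} = \bar \S_x$. Decompose $D \ez = \proj_{\T}(D \ez) + \proj_{\S}(D \ez)$; since $\bar \S_x$ is an affine coset of $\S$, subtracting the $\S$-component leaves the result in $\bar \S_x$, so $\proj_{\T}(D \ez) \in \bar \S_x \cap \T$. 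By Proposition~\ref{prop:convex-basics-decompos}, this intersection is the singleton $\{e\}$, giving $e = \proj_{\T}(D \ez)$.

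For (iii), relative interior commutes with linear maps, so $\f_{0,D^* x} \in \ri \partial \J_0(D^* x)$ implies $\f_x = D \f_{0,D^* x} \in D \, \ri \partial \J_0(D^* x) = \ri \partial \J(x)$. The equality $\dom \antigx = \S$ then follows from Proposition~\ref{prop:anti-coer} applied to $\J$. For the subdifferential gauge formula, introduce $C_0 = \partial \J_0(D^* x) - \f_{0,D^* x}$, whose gauge is $\antigxa$, and observe that $\partial \J(x) - \f_x = D C_0$. A direct application of the definition of the gauge yields
\[
\antigx(\eta) = \gauge_{D C_0}(\eta) = \inf \enscond{\antigxa(v)}{v \in \RR^P, \; D v = \eta}.
\]
Since $\dom \antigxa = \Sz$ by Proposition~\ref{prop:anti-coer} applied to $\J_0$, only $v \in \Sz$ contribute to the infimum. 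For $\eta \in \S = \Im D_{\Sz}$, the element $D_{\Sz}^+ \eta$ lies in $\Sz$ and satisfies $D(D_{\Sz}^+ \eta) = \eta$, so any feasible $v \in \Sz$ can be written as $v = D_{\Sz}^+ \eta + z'$ with $z' \in \Sz \cap \Ker D$. Finally, to upgrade $\Sz \cap \Ker D$ to $\Ker D_{\Sz}$, decompose any $z \in \Ker D_{\Sz}$ as $z = z' + z''$ with $z' \in \Sz \cap \Ker D$ and $z'' \in \Tz$; when $z'' \neq 0$ the argument $D_{\Sz}^+ \eta + z$ leaves $\dom \antigxa = \Sz$ and contributes $+\infty$, so only $z'' = 0$ matters and both infima coincide. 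The main delicate point will be justifying carefully the identity $\gauge_{D C_0}(\eta) = \inf\{\antigxa(v) : D v = \eta\}$ together with the validity of the pseudo-inverse parameterization, both of which hinge on the restriction $\eta \in \Im D_{\Sz}$.
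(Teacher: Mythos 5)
Your proof is correct, and parts (i), (ii) and the relative-interior claim of (iii) follow essentially the same lines as the paper (for (ii) you identify $\proj_\T D\ez$ as the unique point of $\bar\S_x\cap\T$ via Proposition~\ref{prop:convex-basics-decompos}, whereas the paper computes the minimal-norm element of $\bar\S_x=D\ez+\S$ directly; these are equivalent). The one genuine divergence is in the gauge formula of (iii): the paper works on the dual side, computing $\antigPx(d)=\sigma_{D(\partial\J_0(D^*x)-f_{0,D^*x})}(d)=\antigPxa(D_{\Sz}^*d)$ and then conjugating back through the machinery of Lemma~\ref{lem:gaugelin} and Proposition~\ref{prop:antig-polar}(ii), while you compute $\gauge_{DC_0}$ directly from the definition of a gauge as $\inf\{\antigxa(v): Dv=\eta\}$ and then justify the pseudo-inverse parameterization by hand. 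Your primal route is more elementary and avoids the qualification conditions needed for the conjugate calculus ($\Im(D^*)\cap\ri(C_0^\circ)\neq\emptyset$), at the cost of having to argue carefully that the two nested infima (over $\lambda$ and over $v$) can be interchanged for the compact set $C_0$; you correctly flag this as the delicate point, and it does go through since $C_0=\partial\J_0(D^*x)-f_{0,D^*x}$ is compact, closed, and contains the origin in its relative interior, so $\lambda C_0=\{v:\antigxa(v)\leq\lambda\}$ for $\lambda>0$. Your final reduction from $\Ker(D_{\Sz})$ to $\Ker(D)\cap\Sz$ via the decomposition $\Ker(D_{\Sz})=(\Sz\cap\Ker D)\oplus\Tz$ and the fact that $\dom\antigxa=\Sz$ matches the paper's closing remark.
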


Capitalizing on these properties, we now establish the following.
\begin{prop}\label{prop:gauge-analysis-stable}
Let $x \in \RR^N$ and $u=D^* x$. Suppose that $\J_0 \in \psfl{u}{\Gamma_0,\nu_{0,u},\mu_{0,u},\tau_{0,u},\xi_{0,u}}$. Then with the choice $f_x = D f_{0,u}$ and $\Gamma$ any finite-valued coercive gauge on $\T$, $\J = \J_0 \circ D^* \in \psfl{x}{\Gamma,\nu_{x},\mu_{x},\tau_{x},\xi_{x}}$, with
  \begin{align*}
    \nu_x  &= 
    \frac{1}{\normOP{D^*}{\Gamma}{\Gamma_0}}\nu_{0,u}\\
    \mu_x  &=
    \mu_{0,u}
    \normOP{P_\T D}{\Gamma}{\Gamma_0}
    \normOP{D^*}{\Gamma}{\Gamma_0} \\
    \tau_x &=
    \left(
      \tau_{0,u}
      \normOP{D_{\Sz}^+ \proj_\S D}{\J_{0,f_{0,u}}^{u,\circ}}{\J_{0,f_{0,u}}^{u,\circ}}
      +
      \mu_{0,u}
      \normOP{D_{\Sz}^+ \proj_\S D}{\Gamma_0}{\J_{0,f_{0,u}}^{u,\circ}}
    \right)
    \normOP{D^*}{\Gamma}{\Gamma_0}     \\
    \xi_x  &=
    \xi_{0,u}\normOP{D^*}{\Gamma}{\Gamma_0} ~.
  \end{align*}
\end{prop}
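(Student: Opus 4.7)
My plan is to reduce every Lipschitz estimate for $\J = \J_0\circ D^*$ at $x$ to the corresponding estimate for $\J_0$ at $u = D^* x$, using the explicit formulas for $T_x$, $e_x$, $f_x$, and $\antigx$ provided by Proposition~\ref{prop:analysis-dec-form}. The single workhorse bound that will link the two scales is
\[
\certCof^0(D^*(x-x')) \leq \normOP{D^*}{\Gamma}{\Gamma_0}\certCo{x-x'},
\]
which transports a $\Gamma$-ball around $x$ to a $\Gamma_0$-ball around $u$. This immediately suggests $\nu_x = \nu_{0,u}/\normOP{D^*}{\Gamma}{\Gamma_0}$: whenever $\certCo{x-x'} < \nu_x$, the above inequality puts $u' = D^* x'$ inside the $\nu_{0,u}$-neighborhood of $u$ in which $\J_0$ is partly smooth, so $T_{0,u'} = T_{0,u}$ and hence $T_{x'} = \Ker(D_{S_{0,u'}}^*) = \Ker(D_{S_{0,u}}^*) = T_x$ by Proposition~\ref{prop:analysis-dec-form}(i).

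I would then verify partial smoothness of $\J$ at $x$ relative to $T_x$ directly from Definition~\ref{defn:psg}: smoothness of $\J|_{T_x}$ follows from smoothness of $\J_0$ on the affine patch $D^* x + D^*(T_x) \subset u + T_{0,u}$; sharpness is exactly Proposition~\ref{prop:analysis-dec-form}(i); and continuity (in fact Lipschitz-continuity) of $\partial \J = D\,\partial \J_0 \circ D^*$ relative to $T_x$ is inherited from that of $\partial \J_0$ relative to $T_{0,u}$, with Lipschitz modulus scaled by $\normOP{D^*}{\Gamma}{\Gamma_0}$ and $\normOP{D}{\Gamma_0}{\Gamma}$.

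For the three remaining constants, I would apply Theorem~\ref{thm:lip-prg} to $\J_0$ and then post-compose with the appropriate operators. The bound on $\mu_x$ is direct: since $e_{x'} = \proj_{T} D e_{0,u'}$ on the neighborhood, we get $e_x - e_{x'} = \proj_{T} D(e_{0,u} - e_{0,u'})$, and applying the operator-norm inequality twice yields the stated product form. The bound on $\xi_x$ uses the infimum formula of Proposition~\ref{prop:analysis-dec-form}(iii): choosing the optimal $z^\star \in \ker(D_{S_0})$ for $\antigx(\eta)$, we have $\antigxp(\eta) \leq \antigxpa(D_{S_0}^+ \eta + z^\star) \leq (1 + \xi_{0,u}\certCof^0(u-u'))\antigxa(D_{S_0}^+ \eta + z^\star) = (1+\xi_{0,u}\certCof^0(u-u'))\antigx(\eta)$, which, combined with the transport inequality, gives $\xi_x = \xi_{0,u}\normOP{D^*}{\Gamma}{\Gamma_0}$.

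The hard part will be $\tau_x$, where I need to bound $\antigx(\proj_\S(f_x - f_{x'}))$ with $f_x = D f_{0,u}$. The key observation is that since $\proj_{T_{0,u}} f_{0,u} = e_{0,u}$ by Proposition~\ref{prop:convex-basics-decompos}(ii), we can split
\[
f_{0,u} - f_{0,u'} = (e_{0,u} - e_{0,u'}) + \proj_{S_0}(f_{0,u} - f_{0,u'}),
\]
so that applying $D$, projecting onto $S$, feeding the result into the infimum formula with $z=0$ and then sublinearity of $\antigxa$, the first summand contributes a term proportional to $\mu_{0,u}$ through $\normOP{D_{S_0}^+\proj_\S D}{\Gamma_0}{\antigxa}$, and the second contributes $\tau_{0,u}$ through $\normOP{D_{S_0}^+\proj_\S D}{\antigxa}{\antigxa}$. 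Multiplying by $\normOP{D^*}{\Gamma}{\Gamma_0}$ to convert the $\Gamma_0$-Lipschitz rate into a $\Gamma$-Lipschitz rate yields exactly the $\tau_x$ of the statement.
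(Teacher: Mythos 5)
Your proposal is correct and follows essentially the same route as the paper's proof: the transport inequality $\Gamma_0(D^*(x-x'))\leq \normOP{D^*}{\Gamma}{\Gamma_0}\Gamma(x-x')$ for $\nu_x$, the identity $e_x-e_{x'}=\proj_\T D(e_{0,u}-e_{0,u'})$ for $\mu_x$, the splitting $f_{0,u}-f_{0,u'}=(e_{0,u}-e_{0,u'})+\proj_{\Sz}(f_{0,u}-f_{0,u'})$ combined with the infimum formula of Proposition~\ref{prop:analysis-dec-form}(iii) evaluated at $z=0$ for $\tau_x$, and the choice of the optimal $z$ in that formula together with pointwise $\xi_{0,u}$-stability for $\xi_x$. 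No gaps.
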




\section{Exact Model Selection and Identifiability}
\label{sec:small}

In this section, we state our main recovery guarantee, which asserts that under appropriate conditions, \eqref{eq:reg} with a partly smooth function $\J$ relative to a linear manifold has a unique solution $\xsol$ such that its model subspace $\T_{\xsol} = \T_{x_0}$, even in presence of small enough noise. Put differently, regularization by $\J$ is able to stably select the correct model subspace underlying $x_0$.

\subsection{Linearized Precertificate}
\label{sec:small-identifiability}

Let us first introduce the definition of the linearized precertificate.
\begin{defn}\label{defn:minimal-pre}
  The \emph{linearized precertificate} $\ce_F$ for $x \in \RR^N$ is defined by
  \begin{equation*}
    \ce_F = \uargmin{\Phi_{T_x}^* \ce = e_x} \norm{\ce}.
  \end{equation*}
\end{defn}
The subscript $F$ is used as a salute to J.-J. Fuchs~\cite{fuchs2004on-sp} who first considered this vector as a dual certificate for $\lun$ minimization. The intuition behind it is well-understood if one realizes that the existence of a dual certificate $\ce$ is equivalent to $\eta = \Phi^* \ce$ for some $\ce$ such that $\eta_T = e_x$ and $\antigx (\eta_S - P_S f_x) \leq 1$.
Dropping the last constraint, and choosing the minimal $\ldeux$-norm solution to the first constraint recovers the definition of $\ce_F$.

A convenient property of this vector, is that under the restricted injectivity condition, it has a closed form expression.

\begin{lem}\label{lem:closed-form-cef}
  Let $x \in \RR^N$ and suppose that~\eqref{eq:injT} is verified with $T=T_x$.
  Then $\ce_F$ is well-defined and
  \begin{equation*}
    \ce_F = \Phi_{T_x}^{+,*} e_x^{} .
  \end{equation*}
\end{lem}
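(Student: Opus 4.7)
The argument splits into two steps: first showing that the constraint set defining $\ce_F$ is nonempty so that the argmin is well-posed, and second recognizing the resulting minimum-norm solution as the Moore--Penrose expression.

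For feasibility, I would use that $\Phi_{T_x}^* = (\Phi\proj_{T_x})^* = \proj_{T_x}\Phi^*$, since $\proj_{T_x}$ is self-adjoint. The kernel of $\Phi_{T_x}$ consists of $S_x$ together with all $u \in T_x$ satisfying $\Phi u = 0$; by hypothesis~\eqref{eq:injT} the latter intersection is trivial, so $\Ker(\Phi_{T_x}) = S_x$. Taking orthogonal complements,
\begin{equation*}
  \mathrm{Range}(\Phi_{T_x}^*) \;=\; (\Ker \Phi_{T_x})^\perp \;=\; S_x^\perp \;=\; T_x.
\end{equation*}
By Proposition~\ref{prop:convex-basics-decompos}(i), $e_x \in T_x$, hence the linear equation $\Phi_{T_x}^*\ce = e_x$ admits at least one solution, and $\ce_F$ is well-defined as the unique minimum $\ell^2$-norm element of this nonempty affine set.

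For the closed form, the standard characterization of the Moore--Penrose pseudo-inverse states that $A^+ b$ is exactly the minimum-norm solution of $A\ce = b$ whenever $b \in \mathrm{Range}(A)$. Applied to $A = \Phi_{T_x}^*$ and $b = e_x$, this yields $\ce_F = (\Phi_{T_x}^*)^+ e_x$. It only remains to invoke the identity $(\Phi_{T_x}^*)^+ = (\Phi_{T_x}^+)^*$ valid for any linear map, giving $\ce_F = \Phi_{T_x}^{+,*} e_x$. Alternatively, one can verify directly that $\tilde \ce := \Phi_{T_x}^{+,*} e_x$ satisfies the two optimality conditions: (a) $\Phi_{T_x}^* \tilde\ce = (\Phi_{T_x}^+\Phi_{T_x})^* e_x$, and since $\Phi_{T_x}^+\Phi_{T_x}$ is the orthogonal projector onto $(\Ker\Phi_{T_x})^\perp = T_x$, applying it to $e_x \in T_x$ leaves it unchanged; (b) $\tilde\ce \in \mathrm{Range}(\Phi_{T_x}) = (\Ker \Phi_{T_x}^*)^\perp$, which is the orthogonality condition characterizing the minimum-norm solution.

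There is no real obstacle here: the whole lemma is a direct application of Proposition~\ref{prop:convex-basics-decompos}(i) (to secure $e_x \in T_x$) combined with the restricted-injectivity hypothesis~\eqref{eq:injT} (to secure surjectivity of $\Phi_{T_x}^*$ onto $T_x$) and the classical least-squares interpretation of the pseudo-inverse. The only point requiring a touch of care is to justify that $\Ker(\Phi_{T_x}) = S_x$ exactly, rather than only $\supseteq S_x$, which is precisely where~\eqref{eq:injT} enters.
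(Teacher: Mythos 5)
Your argument is correct. The paper does not actually print a proof of this lemma (it is treated as a standard least-squares fact; the only trace of it is the line ``$\Phi_T^*\Phi_T^{+,*}e = e$ since $e\in\Im(\Phi_T^*)$'' inside the proof of Theorem~\ref{thm:identifiability}), and your derivation supplies exactly the intended routine: $\Ker(\Phi_{T_x})=S_x$ via~\eqref{eq:injT}, hence $\Im(\Phi_{T_x}^*)=T_x\ni e_x$ by Proposition~\ref{prop:convex-basics-decompos}(i), and then the minimum-norm/pseudo-inverse characterization.
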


Beside condition $(\Cc_{\T_x})$ stated above, the following Irrepresentability Criterion will play a pivotal role. 
\begin{defn}\label{defn:ic}
  For $x \in \RR^N$ such that $(\Cc_{\T_x})$ with $T = T_x$ holds, we define the \emph{Irrepresentability Criterion} at $x$ as
  \begin{equation*}
    \IC(x) = \antigx(  \Phi_{\S_x}^* \Phi_{\T_x}^{+,*} e_x - \proj_{S_x} f_x ) .
  \end{equation*}
\end{defn}
A fundamental remark is that $\IC(x) < 1$ is the analytical equivalent to the topological non-degeneracy condition $\Phi^* \ce_F \in \ri \partial J(x)$.
Note that if $\J$ is a strong gauge on $T$, then it reads $\IC(x) = \Js(  \Phi_{\S_x}^* \Phi_{\T_x}^{+,*} e_x )$.
The Irrepresentability Criterion clearly brings into play the promoted subspace $\T_{x}$ and the interaction between the restriction of $\Phi$ to $\T_{x}$ and $\S_{x}$. It is a generalization of the irrepresentable condition that has been studied in the literature for some popular regularizers, including the $\lun$-norm \cite{fuchs2004on-sp}, analysis-$\lun$ \cite{vaiter2011robust}, and $\lun$-$\ldeux$ \cite{bach2008consistency}. See Section~\ref{sec:examples} for a comprehensive discussion.

We begin with the noiseless case, i.e. $w=0$ in \eqref{eq:ip}. In fact, in this setting, $\IC(x_0) < 1$ is a sufficient condition for identifiability without any any other particular assumption on the finite-valued convex function $\J$, such as partial smoothness. By identifiability, we mean the fact that $x_0$ is the unique solution of~\eqref{eq:reg-noiseless}.

\begin{thm}\label{thm:identifiability}
  Let $x_0 \in \RR^N$ and~$\T=\T_{x_0}$.
  We assume that~\eqref{eq:injT} holds and $\IC(x_0) < 1$.
  Then $x_0$ is the unique solution of \eqref{eq:reg-noiseless}.
\end{thm}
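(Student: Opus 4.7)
The plan is to construct an explicit dual certificate satisfying the sufficient uniqueness condition of Theorem~\ref{thm:focu}(ii), and then verify that our hypotheses make that certificate work. The natural candidate is the linearized precertificate $\ce_F$ of Definition~\ref{defn:minimal-pre}, evaluated at $x_0$.

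First I would invoke Lemma~\ref{lem:closed-form-cef}: since $(\Cc_{\T_{x_0}})$ holds by hypothesis, the vector $\ce_F = \Phi_{\T_{x_0}}^{+,*} e_{x_0}$ is well-defined, and by the standard identity for the Moore--Penrose pseudo-inverse under restricted injectivity, $\Phi_{\T_{x_0}}^{*} \ce_F = e_{x_0}$. Setting $\alpha := \ce_F$, the first equation in Theorem~\ref{thm:focu}(ii), namely $\Phi_{\T}^{*} \alpha = e_{x_0}$, is automatically satisfied.

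Next, for any choice of $f_{x_0} \in \ri \partial J(x_0)$ (which is non-empty since $\partial J(x_0)$ is a non-empty compact convex set, $J$ being convex and finite-valued), I would merely unpack Definition~\ref{defn:ic}: substituting $\ce_F = \Phi_{\T_{x_0}}^{+,*} e_{x_0}$ gives
\begin{equation*}
  \IC(x_0) = \antig_{f_{x_0}}\bigl(\Phi_{\S_{x_0}}^{*} \ce_F - \proj_{\S_{x_0}} f_{x_0}\bigr).
\end{equation*}
The hypothesis $\IC(x_0) < 1$ is therefore exactly the strict inequality required by the second condition of Theorem~\ref{thm:focu}(ii) at $\alpha = \ce_F$.

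Since $x_0$ is trivially feasible for \eqref{eq:reg-noiseless}, as $y = \Phi x_0$, we are in a position to apply Theorem~\ref{thm:focu}(ii), which yields that $x_0$ is the unique solution of \eqref{eq:reg-noiseless}. There is no substantive obstacle in the argument; the theorem is essentially a semantic rewriting of the sufficient condition of Theorem~\ref{thm:focu}(ii) in the more explicit language of the linearized precertificate, with Lemma~\ref{lem:closed-form-cef} providing the bridge to a closed form for $\ce_F$ under $(\Cc_{\T_{x_0}})$.
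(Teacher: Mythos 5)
Your proposal is correct and follows essentially the same route as the paper: both take $\alpha = \ce_F = \Phi_{\T}^{+,*} e_{x_0}$, verify $\Phi_{\T}^*\alpha = e_{x_0}$ using the restricted injectivity (the paper phrases this as $e_{x_0} \in \Im(\Phi_{\T}^*)$, which holds since $e_{x_0} \in \T$ and $(\Cc_{\T})$ forces $\Im(\Phi_{\T}^*)=\T$), and then observe that the second condition of Theorem~\ref{thm:focu}(ii) is literally the hypothesis $\IC(x_0) < 1$.
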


\subsection{Exact Model Selection}
\label{sec:small-exact}

It turns out that even in presence of noise in the measurements $y$ according to~\eqref{eq:ip}, condition $\IC(x_0) < 1$ is also sufficient for \eqref{eq:reg} with $PSFL_{x_0}$ regularizer to stably recover the model subspace underlying $x_0$. This is stated in the following theorem.

\begin{thm}\label{thm:local-stab}
  Let $x_0 \in \RR^N$ and $\T=\T_{x_0}$.
  Suppose that $\J \in \psfl{x_0}{\Gamma,\nu_{x_0},\mu_{x_0},\tau_{x_0},\xi_{x_0}}$.
  Assume that~\eqref{eq:injT} holds and $\IC(x_0) < 1$.
  Then there exist positive constants $(A_\T,  B_\T)$ that solely depend on $\T$ and a constant
  $C(x_0)$ such that if $w$ and $\lambda$ obey
  \begin{equation}\label{eq-constr-lambda-robustness}
    \frac{A_\T}{1-\IC(x_0)} \norm{w} 
    \leq \lambda \leq 
    \nu_{x_0} \min \big(  B_\T, C(x_0) \big)    
  \end{equation}
  the solution $\xsol$ of \eqref{eq:reg} with noisy measurements $y$ is unique, and satisfies $\T_{\xsol} = T$. 
  Furthermore, one has
  \begin{equation*}
    \norm{x_0-\xsol} = O\Big(\max(\norm{w},\lambda)\Big).
  \end{equation*}
\end{thm}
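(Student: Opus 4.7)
The plan is to construct a candidate $\bar x$ lying in the model subspace $\T = \T_{x_0}$ by imposing only the first part of the minimality condition of Proposition~\ref{prop:foc}(i), namely $\Phi_\T^*(y-\Phi \bar x)=\lambda e_{\bar x}$, and then to verify that the strict inequality in Theorem~\ref{thm:focu}(i) holds as well at $\bar x$. That theorem will then identify $\bar x$ as the unique solution $\xsol$ of~\eqref{eq:reg}, force $\T_{\xsol}=\T$, and deliver the $\ell^2$ bound from the explicit form of $\bar x$.

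Existence of $\bar x$ follows from a Banach fixed-point argument. Since $(\Cc_\T)$ holds, $\Phi_\T^*\Phi_\T$ is invertible when restricted to $\T$. On the closed $\Gamma$-ball of radius $\nu_{x_0}$ around $x_0$ in $\T$, the Lipschitz estimate~\eqref{eq:lip-mu} on $x\mapsto e_x$ turns the map
\[
F(x) \;=\; x_0 + \Phi_\T^+ w - \lambda \,(\Phi_\T^*\Phi_\T|_\T)^{-1} e_x
\]
into a contraction whose unique fixed point $\bar x\in\T$ satisfies $\Phi_\T^*(y-\Phi \bar x)=\lambda e_{\bar x}$ for $y=\Phi x_0+w$, together with a quantitative bound $\Gamma(\bar x-x_0)\leq K_1(\|w\|+\lambda)$ in which $K_1$ depends only on $\T$, $\Phi$, $\mu_{x_0}$ and $\Gamma$. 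Choosing $B_\T$ and $C(x_0)$ in~\eqref{eq-constr-lambda-robustness} so that $K_1(\|w\|+\lambda) < \nu_{x_0}$, relation~\eqref{eq:lip-nu} immediately yields $\T_{\bar x}=\T_{x_0}=\T$. Consequently $e_{\bar x}$, $f_{\bar x}$ and the subdifferential gauge $\Omega_{\bar x}$ at $\bar x$ obey the perturbation bounds~\eqref{eq:lip-mu}--\eqref{eq:lip-xi} relative to $x_0$.

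The heart of the argument is then to establish
\[
\Omega_{\bar x}\bigl(\lambda^{-1}\Phi_\S^*(y-\Phi \bar x) - \proj_\S f_{\bar x}\bigr) \;<\; 1.
\]
Combining the first-order equation with $y=\Phi x_0+w$ and the identity $\Phi_\T\Phi_\T^+=\proj_{\Im \Phi_\T}$ decomposes the argument of $\Omega_{\bar x}$ as
\[
\bigl[\Phi_\S^* \Phi_\T^{+,*} e_{\bar x} - \proj_\S f_{\bar x}\bigr] \;+\; \lambda^{-1}\Phi_\S^*(\mathrm{Id}-\Phi_\T\Phi_\T^+)\,w.
\]
On the ``noiseless'' bracket,~\eqref{eq:lip-xi} converts $\Omega_{\bar x}$ into a $(1+\xi_{x_0}\Gamma(\bar x-x_0))$-perturbation of $\Omega_{x_0}$, and~\eqref{eq:lip-mu}--\eqref{eq:lip-tau} replace $e_{\bar x}$ and $\proj_\S f_{\bar x}$ by $e_{x_0}$ and $\proj_\S f_{x_0}$ up to additive errors of size $O(\Gamma(\bar x-x_0))$, yielding by definition of $\IC$ an upper bound $\IC(x_0)+O(\Gamma(\bar x-x_0))$. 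The noise term lies in $\S$, on which $\Omega_{\bar x}$ is a finite continuous gauge (Proposition~\ref{prop:anti-coer}), and is therefore bounded by $O(\|w\|/\lambda)$. Balancing both contributions against the slack $1-\IC(x_0)$ pins the lower bound in~\eqref{eq-constr-lambda-robustness} through the $\|w\|/\lambda$ term and the upper bound through $\Gamma(\bar x-x_0) = O(\|w\|+\lambda)$.

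Theorem~\ref{thm:focu}(i) then asserts that $\bar x$ is the unique solution of~\eqref{eq:reg}, giving $\T_{\xsol}=\T$; the fixed-point formula together with equivalence of $\Gamma$ and $\|\cdot\|$ on the finite-dimensional subspace $\T$ yields $\|\xsol-x_0\|=O(\max(\|w\|,\lambda))$. The main obstacle is the certificate inequality: the finite budget $1-\IC(x_0)$ must simultaneously absorb three distinct perturbation terms, one for each of $e_x$, $f_x$ and the gauge $\Omega_x$, plus the noise term of order $\|w\|/\lambda$. Extracting the explicit form of $A_\T$, $B_\T$ and $C(x_0)$ so that each contribution is quantitatively controlled by the partial-smoothness constants $\mu_{x_0},\tau_{x_0},\xi_{x_0}$ and by restricted-injectivity quantities associated to $\T$ is where most of the technical work lies.
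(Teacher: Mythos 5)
Your overall strategy is the same as the paper's: produce a candidate $\bar x\in\T$ satisfying the $\T$-part of the optimality condition, show $\Gamma(\bar x-x_0)\leq\nu_{x_0}$ so that \eqref{eq:lip-nu} gives $\T_{\bar x}=\T$, then verify the strict certificate inequality by isolating the $\IC(x_0)$ term and absorbing the three partial-smoothness perturbations (for $e$, $f$ and the gauge) plus an $O(\norm{w}/\lambda)$ noise term into the slack $1-\IC(x_0)$, and finally invoke Theorem~\ref{thm:focu}(i). Your second and third steps match the paper's estimates \eqref{eq:to-be-bounded}--\eqref{eq:polynom1} essentially line by line.

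The one place you genuinely diverge is the construction of the candidate, and that is where there is a gap. The paper takes $\hat x$ to be the minimizer of the restricted problem $\min_{x\in\T}\tfrac12\norm{y-\Phi x}^2+\lambda J(x)$: under \eqref{eq:injT} this objective is strongly convex on $\T$, so existence and uniqueness are immediate, and the implicit equation $\hat x=x_0+\Phi_{\T}^+w-\lambda(\Phi_{\T}^*\Phi_{\T})^{-1}\tilde e$ is just its first-order condition (Lemma~\ref{lem:implicit-equation}). Your Banach fixed-point argument on $F(x)=x_0+\Phi_{\T}^+w-\lambda(\Phi_{\T}^*\Phi_{\T})^{-1}e_x$ needs a \emph{two-sided} Lipschitz estimate $\Gamma(e_{x'}-e_{x''})\leq L\,\Gamma(x'-x'')$ for arbitrary pairs $x',x''$ in the ball, whereas \eqref{eq:lip-mu} only controls $\Gamma(e_{x_0}-e_{x'})$, i.e.\ it is anchored at $x_0$; to justify the contraction you would have to return to the proof of Theorem~\ref{thm:lip-prg}, where $z\mapsto e_z$ is shown to be $\Calt{1}$ on $\T$ near $x_0$, and extract a uniform constant from there. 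You also need the contraction condition $\lambda L\,\normOP{(\Phi_{\T}^*\Phi_{\T})^{-1}}{\Gamma}{\Gamma}<1$, an additional upper bound on $\lambda$ that the paper's variational construction never requires; this is harmless for the statement, since the constants are only asserted to exist, but it must be folded into $C(x_0)$ and stated explicitly. With those two points repaired, your argument goes through and is otherwise the paper's proof.
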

Clearly this result asserts that exact recovery of $\T_{x_0}$ from noisy partial measurements is possible with the proviso that the regularization parameter $\lambda$ lies in the interval~\eqref{eq-constr-lambda-robustness}. The value $\lambda$ should be large enough to reject noise, but small enough to recover the entire subspace $\T_{x_0}$. In order for the constraint~\eqref{eq-constr-lambda-robustness} to be non-empty, the noise-to-signal level $\norm{w}/\nu_{x_0}$ should be small enough, i.e.
\begin{equation*}
  \frac{\norm{w}}{\nu_{x_0}} \leq 
  \frac{1-\IC(x_0)} {A_\T}  \min\pa{B_\T, C(x_0)} ~. 
\end{equation*}
The constant $C(x_0)$ involved in this bound depends on $x_0$ and has the form
\begin{gather*}
  C(x_0) =  \frac{1-\IC(x_0)}{\xi_{x_0} \nu_{x_0} } H\pa{ \frac{D_\T \:\mu_{x_0} +  \tau_{x_0}}{\xi_{x_0}} } \\
  \qwhereq
  H(\beta) = \frac{ \beta + 1/2 }{ E_T\, \beta }	
  \, \phi\pa{ \frac{2\beta}{ (\beta + 1 )^2 } }  \qandq \phi(u) = \sqrt{1+u}-1 ~.
\end{gather*}
The constants $(D_\T,E_\T)$ only depend on $\T$. $C(x_0)$ captures the influence of the parameters $\pi_{x_0} = (\mu_{x_0},\tau_{x_0},\xi_{x_0})$, where the latter reflect the geometry of the partly smooth regulrizer $\J$ at $x_0$. More precisely, the larger $C(x_0)$, the more tolerant the recovery is to noise. Thus favorable regularizers are those where $C(x_0)$ is large, or equivalently where $\pi_{x_0}$ has small entries, since $H$ is a strictly decreasing function.

It is worth noting that this analysis is in some sense sharp following the argument in~\cite[Proposition 1]{2014-vaiter-ps-stability}.
The only case not covered by our analysis is when $\IC(x) = 1$.



\section{Examples of Partly Smooth Functions Relative to a Linear Manifold}
\label{sec:examples}

\subsection{Synthesis $\ell^1$ Sparsity}
\label{sec:ex-sparsity}
The regularized problem \eqref{eq:reg} with $\J(x)= \norm{x}_1 = \sum_{i=1}^N \abs{x_i}$ promotes sparse solutions. It goes by the name of Lasso \cite{tibshirani1996regre} in the statistical literature, and Basis Pursuit DeNoising (or Basis Pursuit in the noiseless case) \cite{chen1999atomi} in signal processing. 

\subsubsection{Structure of the $\lun$ norm}

The norm $\J(x)= \norm{x}_1$ is a symmetric (finite-valued) strong gauge. 
More precisely, we have the following result.
\begin{prop}\label{prop:ex-lun-swdg}
  $J = \normu{\cdot}$ is a symmetric strong gauge with
  \begin{gather*}
    \T_x = \enscond{\eta \in \RR^N}{ \forall j \not\in I, \, \eta_j = 0 }, \quad
    \S_x = \enscond{\eta \in \RR^N}{ \forall i \in I, \, \eta_i = 0 }, \\
    e_x = \sign(x), \quad
    f_x = e_x, \quad \Js_{f_x} = \normi{\cdot} + \iota_{\S_x} ~,
  \end{gather*}
  where $I=I(x) = \enscond{i}{x_i \neq 0}$.
  Moreover, it is partly smooth relative to a linear manifold with
  \begin{equation*}
    \certCof = \normi{\cdot}, \quad
    \nu_x = (1-\delta)\umin{i \in I} \abs{x_i} \, , \delta \in ]0,1]
    \qandq
    \mu_x = \tau_x = \xi_x = 0 .
  \end{equation*}
\end{prop}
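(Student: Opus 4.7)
The plan is to unpack the subdifferential of $\|\cdot\|_1$ explicitly, read off all the geometric objects $(T_x,\S_x,e_x,f_x,\J^\circ_{f_x})$ from it, verify the strong gauge conditions, and then check the four Lipschitz parameters of partial smoothness by a direct inspection.

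First I would recall the classical formula
\[
\partial\normu{x} = \enscond{\eta\in\RR^N}{\eta_i = \sign(x_i)\;\foralls i\in I,\;\abs{\eta_j}\leq 1\;\foralls j\in I^c},
\]
with $I=\{i:x_i\neq 0\}$. Taking its affine hull removes only the inequality constraints, so $\bar\S_x=\{\eta:\eta_i=\sign(x_i)\;\foralls i\in I\}$. The Euclidean projection of $0$ onto this affine subspace has coordinates $\sign(x_i)$ on $I$ and $0$ on $I^c$, i.e. $e_x=\sign(x)$. Subtracting $e_x$ yields $\S_x=\{\eta:\eta_{(I)}=0\}$, whence $T_x=\S_x^\perp=\{\eta:\eta_{(I^c)}=0\}$, which matches the statement. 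Since $(e_x)_j=0$ satisfies $\abs{(e_x)_j}<1$ strictly on $I^c$, we have $e_x\in\ri\partial\J(x)$, so the choice $f_x=e_x$ is admissible.

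Next I would confirm the two strong-gauge requirements of Definition~\ref{defn:strong-gauge}: (a) $e_x\in\ri\partial\J(x)$ is just shown; (b) separability holds because for $x\in T$ (supported on $I$) and $x'\in\S$ (supported on $I^c$) the supports are disjoint, so $\normu{x+x'}=\sum_{i\in I}\abs{x_i}+\sum_{j\in I^c}\abs{x'_j}=\normu{x}+\normu{x'}$. Symmetry is obvious. Now applying Proposition~\ref{prop:strong-dec}, the subdifferential reads $\partial\J(x)=\{\eta:\eta_{T_x}=e_x,\;\J^\circ(\eta_{\S_x})\leq 1\}$. Since the polar of $\normu{\cdot}$ is $\normi{\cdot}$ and $\J^\circ_{f_x}$ is supported on $\S_x$ by Proposition~\ref{prop:antig-polar}(iii), one gets $\J^\circ_{f_x}=\normi{\cdot}+\iota_{\S_x}$.

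For partial smoothness relative to the linear manifold $\Mm=T_x$, I would check the three items of Definition~\ref{defn:psg} directly. Smoothness: $T_x$ is clearly a $\Cdeux$ submanifold, and for $x'\in T_x$ with the same sign pattern as $x$, $\J(x')=\dotp{\sign(x)}{x'}$ is linear, hence $\Cdeux$. Sharpness: $\tgtManif{x}{T_x}=T_x$ holds tautologically. Continuity (in fact local constancy) of $\partial\J$ relative to $T_x$ will follow once I check that the sign pattern is preserved locally. This is where I use $\certCof=\normi{\cdot}$: for $x'\in T_x$ with $\normi{x-x'}<(1-\delta)\min_{i\in I}\abs{x_i}$, coordinate-wise one has $x'_{(I^c)}=0$ and $\abs{x'_i-x_i}<\abs{x_i}$ for $i\in I$, forcing $\sign(x'_i)=\sign(x_i)$. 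Therefore $I(x')=I$, $T_{x'}=T_x$ establishing \eqref{eq:lip-nu} with $\nu_x=(1-\delta)\min_{i\in I}\abs{x_i}$, and moreover $e_{x'}=\sign(x')=\sign(x)=e_x$ exactly, so $f_{x'}=f_x$ and $\J^\circ_{f_{x'}}=\J^\circ_{f_x}$. This gives $\mu_x=\tau_x=\xi_x=0$, and the inequalities \eqref{eq:lip-mu}--\eqref{eq:lip-xi} are satisfied trivially.

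No step seems genuinely hard; the one place requiring a little care is the choice of $\certCof=\normi{\cdot}$ and the corresponding bound on $\nu_x$, because one must simultaneously (i) keep $x'$ on $T_x$, (ii) prevent any active coordinate from vanishing, and (iii) prevent any active coordinate from flipping sign. Using the $\ell^\infty$ norm decouples the coordinates, and the quantity $\min_{i\in I}\abs{x_i}$ is the sharpest admissible threshold up to the slack factor $(1-\delta)$ (which ensures the strict inequality in \eqref{eq:lip-nu} can be closed to a non-strict one on a closed neighborhood).
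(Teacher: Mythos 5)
Your proposal is correct and follows essentially the same route as the paper's proof: explicit formula for $\partial\normu{\cdot}(x)$, reading off $(\T_x,\S_x,e_x,f_x)$, verifying $e_x\in\ri\partial\J(x)$ and separability to invoke Proposition~\ref{prop:strong-dec}, and then using local constancy of the support and sign pattern on the $\linf$-ball of radius $\nu_x$ restricted to $\T_x$ to conclude $\mu_x=\tau_x=\xi_x=0$. The only difference is that you spell out the three items of Definition~\ref{defn:psg} and the separability computation explicitly, which the paper leaves implicit.
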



\subsubsection{Relation to previous works}

The theoretical recovery guarantees of $\lun$-regularization have been extensively studied in the recent years. There is of course a huge literature on the subject, and covering it comprehensively is beyond the scope of this paper. In this section, we restrict our overview to those works pertaining to ours, i.e., sparsity pattern recovery in presence of noise.

For instance, an irrepresentability criterion was introduced in \cite{fuchs2004on-sp}. Let $s \in \ens{-1,0,+1}^N$ and $I$ its support. Suppose that $\Phi_{(I)}$ has full column rank, which is precisely \eqref{eq:injT} in this case. The synthesis irrepresentability criterion $\IC_{\lun}$ of $s$ is defined as
\begin{equation*}
  \IC_{\lun}(s) = \normi{\Phi_{(I^c)}^* \Phi_{(I)}^{+,*} s_{(I)}^{ }} 
           = \umax{j \in I^c} \abs{\dotp{\Phi_j}{\Phi_{(I)}^{+,*} s_{(I)}}} .
\end{equation*}
From Definition~\ref{defn:ic} and Proposition~\ref{prop:ex-lun-swdg}, one immediately recognizes that $\IC_{\lun}(\sign(x)) = \IC(x)$.
The condition $\IC_{\lun}(\sign(x)) < 1$, also known as the irrepresentable condition in the statistical literature, was proposed \cite{fuchs2004on-sp} for exact support (and sign) pattern recovery with $\ell^1$-regularization from partial noisy measurements. In this respect, this work can then be viewed as a special instance of ours, as Theorem~\ref{thm:local-stab} in this case ensures recovery of the support pattern.


\subsection{Analysis $\lun$ Sparsity}
\label{sec:ex-sparsity-analysis}

Let $D = (d_i)_{i=1}^P$ be a collection of $P$ atoms $d_i \in \RR^N$.
The analysis semi-norm associated to $D$ is $\J(x) = \normu{D^* x} = \sum_{i=1}^P \abs{\dotp{d_i}{x}}$.
Obviously, the synthesis $\lun$-regularization corresponds to $D = \Id$.
Popular examples of analysis-type $\lun$ semi-norms include for instance the discrete anisotropic total variation~\cite{rudin1992nonlinear}, the Fused Lasso~\cite{tibshirani2004sparsity} and shift invariant wavelets~\cite{steidl2004equivalence}.

\subsubsection{Structure of the analysis $\lun$ semi-norm}

The semi-norm $\J(x) = \normu{D^* x}$ is a symmetric partly smooth function relative to a linear manifold. This is formalized in the following proposition whose proof is a straightforward application of Proposition~\ref{prop:analysis-dec-form}, Proposition~\ref{prop:gauge-analysis-stable} and Proposition~\ref{prop:ex-lun-swdg}.
\begin{prop}\label{prop:ex-lunanalysis-swdg}
  $\J = \normu{D^* \cdot}$ is a symmetric (finite-valued) gauge with
  \begin{gather*}
    \T_x      = \ker(D_{(I^c))}^*) = \enscond{\eta \in \RR^N}{ \forall j \not\in I, \, \dotp{d_j}{\eta_j} = 0 }, \quad
    \S_x      = \Im(D_{I^c}), \\
    e_x       = \proj_{\Ker(D_{I^c}^*)}D\sign(D^* x), \quad
    f_x       = D \sign(D^* x), \\
    \antig_{f_x}(\eta) = \inf_{z \in \ker(D_{(I^c)})} \normi{D_{(I^c)}^+ \eta + z}, \qforq \eta \in \S_x ~,
  \end{gather*}
  where $I=I(x) = \enscond{i}{\dotp{d_i}{x_i} \neq 0}$.
  Moreover, it is partly smooth relative to a linear manifold with parameters
  \begin{equation*}
    \nu_x = (1-\delta)\umin{i \in I} |\dotp{d_i}{x_i}|, \delta \in ]0,1]
    \qandq
    \mu_x = \tau_x = \xi_x = 0 . 
  \end{equation*}
\end{prop}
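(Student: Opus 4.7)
The proposition is essentially an algebraic combination of three earlier results, so the plan is to identify $\J$ as the pre-composition $\J_0 \circ D^*$ with $\J_0 = \normu{\cdot}$ and feed the known data for the synthesis $\lun$ norm into the analysis machinery of Propositions~\ref{prop:analysis-dec-form} and~\ref{prop:gauge-analysis-stable}. Write $u = D^* x$ and $I = \enscond{i}{u_i \neq 0}$. By Proposition~\ref{prop:ex-lun-swdg}, $\J_0$ is a symmetric strong gauge, and at the point $u$ one has
\begin{equation*}
  \Tz = \enscond{\eta}{\eta_j = 0,\, j \notin I}, \quad
  \Sz = \enscond{\eta}{\eta_i = 0,\, i \in I}, \quad
  \ez = f_{0,u} = \sign(u), \quad \antigxa = \normi{\cdot} + \iota_{\Sz}.
\end{equation*}
Feeding these ingredients into Proposition~\ref{prop:analysis-dec-form}(i)--(iii) yields the four claimed expressions: $\T_x = \Ker(D_{\Sz}^*) = \Ker(D_{(I^c)}^*)$, $\S_x = \Im(D_{\Sz}) = \Im(D_{(I^c)})$, $e_x = \proj_{\T_x} D \ez = \proj_{\Ker(D_{(I^c)}^*)} D\sign(D^*x)$, and $f_x = D f_{0,u} = D \sign(D^*x)$. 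The formula for $\antigx$ then follows by substituting $\antigxa = \normi{\cdot}$ (on $\Sz$) into the infimal expression of Proposition~\ref{prop:analysis-dec-form}(iii), giving $\antigx(\eta) = \inf_{z \in \Ker(D_{(I^c)})} \normi{D_{(I^c)}^+ \eta + z}$ for $\eta \in \S_x$.

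For the partial smoothness parameters I would invoke Proposition~\ref{prop:gauge-analysis-stable} with the data $(\certCof_0, \nu_{0,u}, \mu_{0,u}, \tau_{0,u}, \xi_{0,u})$ supplied by Proposition~\ref{prop:ex-lun-swdg}, namely $\certCof_0 = \normi{\cdot}$, $\nu_{0,u} = (1-\delta) \umin{i \in I}\abs{u_i}$, and $\mu_{0,u} = \tau_{0,u} = \xi_{0,u} = 0$. Since each of $\mu_x, \tau_x, \xi_x$ in Proposition~\ref{prop:gauge-analysis-stable} is a non-negative linear combination of $\mu_{0,u}, \tau_{0,u}, \xi_{0,u}$ with finite operator-norm coefficients (finiteness comes from the coercivity of $\certCof$ on $\T_x$, so $\normOP{D^*}{\certCof}{\certCof_0}$ and the associated operator bounds are all finite), the vanishing of these three inputs forces $\mu_x = \tau_x = \xi_x = 0$. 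Finally, because the parameters are zero, the value of $\nu_x$ may be taken in its sharpest form: the sign pattern of $D^* x'$ agrees with that of $D^* x$ (and hence $T_{x'} = T_x$) as soon as $\normi{D^*(x-x')} < \umin{i \in I}\abs{u_i}$, which gives $\nu_x = (1-\delta)\umin{i\in I}\abs{(D^*x)_i}$ as stated. Symmetry of $\J$ is immediate from the symmetry of $\normu{\cdot}$.

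The only non-mechanical step, and the one I would be most careful with, is unpacking the infimal-convolution formula for $\antigx$ in Proposition~\ref{prop:analysis-dec-form}(iii). One must check that when $\antigxa = \normi{\cdot} + \iota_{\Sz}$, the domain constraint $D_{\Sz}^+ \eta + z \in \Sz$ is automatically ensured by restricting $z$ to $\Ker(D_{\Sz})$, because $\Im(D_{\Sz}^+) \subseteq \Sz$ by construction of the pseudo-inverse on $\Sz$. After that observation the infimum reduces cleanly to $\inf_{z \in \Ker(D_{(I^c)})} \normi{D_{(I^c)}^+ \eta + z}$, matching the stated expression. Everything else in the proposition is a transcription from the three cited results.
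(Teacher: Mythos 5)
Your proof is correct and follows exactly the route the paper takes: the paper itself states that Proposition~\ref{prop:ex-lunanalysis-swdg} "is a straightforward application of Proposition~\ref{prop:analysis-dec-form}, Proposition~\ref{prop:gauge-analysis-stable} and Proposition~\ref{prop:ex-lun-swdg}", and gives no further argument. Your additional care about the domain constraint $\Im(D_{\Sz}^+) \subseteq \Sz$ in the infimal formula and the direct justification of the stated $\nu_x$ are sound and consistent with how the paper handles these points elsewhere.
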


\subsubsection{Relation to previous works}

Some insights on the relation and distinction between synthesis- and analysis-based sparsity regularizations were first given in~\cite{elad2007analysis}. When $D$ is orthogonal, and more generally when $D$ is square and invertible, the two forms of regularization are equivalent in the sense that the set of minimizers of one problem can be retrieved from that of an equivalent form of the other through a bijective change of variable. It is only recently that theoretical guarantees of $\lun$-analysis sparse regularization have been investigated, see \cite{vaiter2011robust} for a comprehensive review.
Among such a work, the authors in~\cite{nam2012cosparse} propose a null space property for identifiability in the noiseless case. The most relevant work to ours here is that of~\cite{vaiter2011robust}, where the authors prove exact robust recovery of the support and sign patterns under conditions that are a specialization of those in Theorem \ref{thm:local-stab}. 

More precisely, let $I$ be the support of $D^* x_0$, and $s$ its sign vector. Denote $\T=\T_{x_0}=\S^\perp=\ker(D_{I^c}^*)$, $e_{x_0}=\sign(D^* x_0)=s$, $e = e_{x_0} = \proj_{\T} D s$, $f = f_{x_0}=D s$. From Definition~\ref{defn:ic} and Proposition~\ref{prop:ex-lunanalysis-swdg}, the criterion $\IC(x_0)$ in this case takes the form
\begin{align*}
\IC(x_0) &= \antig_{f_x}(  \Phi_{\S}^* \Phi_{\T}^{+,*} \proj_{\T} D s - \proj_{S} D s ) \\
	 &= \inf_{z \in \ker(D_{(I^c)})} \normi{D_{(I^c)}^+ \pa{\Phi_{\S}^* \Phi_{\T}^{+,*} \proj_{\T} - \proj_{S}} D s + z} \\
	 &= \inf_{z \in \ker(D_{(I^c)})} \normi{D_{(I^c)}^+ \pa{(\Id - \proj_{\T})\Phi^*\Phi \proj_{\T}(\Phi_{\T}^*\Phi_{\T})^{-1} \proj_{\T} - \proj_{S}} D s + z} \\
	 &= \inf_{z \in \ker(D_{(I^c)})} \normi{D_{(I^c)}^+ \pa{\Phi^*\Phi \proj_{\T}(\Phi_{\T}^*\Phi_{\T})^{-1} \proj_{\T} - (\proj_{\T} + \proj_{S})}D s + z} \\
	 &= \inf_{z \in \ker(D_{(I^c)})} \normi{D_{(I^c)}^+ \pa{\Phi^*\Phi \proj_{\T}(\Phi_{\T}^*\Phi_{\T})^{-1} \proj_{\T} - \Id} D_{(I)} s_{(I)} + z} ~.
\end{align*}
Introducing $U$ as a matrix whose columns form a basis of $\T$, $\IC(x_0)$ can be equivalently rewritten
\begin{align*}
\IC(x_0) &= \inf_{z \in \ker(D_{(I^c)})} \normi{D_{(I^c)}^+ \pa{\Phi^*\Phi A^{[I^c]} - \Id} D_{(I)} s_{(I)} + z} ~,
\end{align*}
where $A^{[I^c]} = U (U^* \Phi^* \Phi U)^{-1} U^*$. We recover exactly the expression of the $\IC_{\lun-D}$ introduced in~\cite{vaiter2011robust}.



\subsection{$\linf$ Antisparsity Regularization}
\label{sec:ex-antisparsity}

Regularization by the $\linf$-norm corresponds to taking $\J(x) = \norm{x}_\infty = \umax{1 \leq i \leq N} \abs{x_i}$. This regularizer promotes flat solutions. It plays a prominent role in a variety of applications including approximate nearest neighbor search \cite{jegou2012anti} or vector quantization \cite{lyubarskii2010uncertainty}; see also \cite{studer12signal} and references therein.

\subsubsection{Structure of the $\linf$-norm}

The norm $\J(x) = \norm{x}_\infty$ is a symmetric partly smooth function relative to a linear manifold, but unlike the $\lun$-norm, it is not strongly so (except for $N=2$). Therefore, in the following proposition, we rule out the trivial case $x=0$.
\begin{prop}\label{prop:ex-linf-swdg}
  $\J = \norm{\cdot}_\infty$ is a symmetric (finite-valued) gauge with
  \begin{gather*}
    \S_x = \enscond{\eta}{ \eta_{(I^c)} = 0 \qandq \dotp{\eta_{(I)}}{s_{(I)}}=0 }, \quad
    \T_x = \enscond{\alpha}{ \alpha_{(I)} = \rho s_{(I)} \qforq \rho \in \RR }, \\
    e_x = \frac{s}{\abs{I}}, \quad
    \f_{x} = e_x, \quad
    \antig_{f_x}(\eta) = \umax{i \in I} (-\abs{I} s_i \eta_i)_+ \qforq \eta \in \S_x ~,
  \end{gather*}
  where $s = \sign(x)$ and $I = I(x) = \enscond{i}{ \abs{x_i} = \norm{x}_\infty }$.
  Moreover, it is partly smooth relative to a linear manifold with
  \begin{equation*}
    \Gamma = \normu{\cdot}, \quad \nu_x = (1-\delta)\big(\norm{x}_\infty - \umax{j \notin I} \abs{x_j}\big), \delta \in ]0,1]
    \qandq
    \mu_x = \tau_x = \xi_x = 0 .
  \end{equation*}
\end{prop}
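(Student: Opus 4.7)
The plan is to establish the three claims—structural form of the subdifferential data, partial smoothness, and vanishing Lipschitz constants—in that order, reusing the $\linf$ example worked out right after Theorem~\ref{thm:decomp}.

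First I would note that the formulas for $\S_x$, $\T_x$, $e_x$ and $\antig_{f_x}$ are essentially a repackaging of the computation already given in Section~\ref{sec:decomposable-prop}, together with the obvious symmetry and finiteness of $\norm{\cdot}_\infty$ as a norm. The one remaining verification is the claim $f_x = e_x \in \ri \partial J(x)$: since $\partial J(x)$ is the face of the $\lun$-ball exposed by $s = \sign(x)$, namely $\{\eta : \eta_{(I^c)} = 0,\; \sum_{i \in I} \eta_i s_i = 1,\; \eta_i s_i \geq 0\}$, the barycenter $e_x = s/|I|$ satisfies the strict inequalities $e_{x,i} s_i = 1/|I| > 0$, so it sits in the relative interior.

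Next I would verify the three items of Definition~\ref{defn:psg} with $\Mm = \T_x$. The sharpness and $\Cdeux$-smoothness conditions are immediate: $\T_x$ is a linear subspace, hence a $C^\infty$ submanifold with tangent space $\T_x$ everywhere, and every $x' \in \T_x$ writes $x'_{(I)} = \rho\, s_{(I)}$, so in a small enough neighborhood of $x$ (where $\rho$ stays close to $\norm{x}_\infty > 0$ and dominates $|x'_j|$ for $j \notin I$) one has $J(x') = \rho$, a linear function of $x'$. Continuity of $\partial J$ along $\T_x$ will be a free by-product of the Lipschitz step below, which will show that $\partial J$ is in fact locally constant on $\T_x$.

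The technical heart is the Lipschitz step. Fix $x' \in \T_x$ with $\norm{x - x'}_1 < \nu_x$ and write $x'_{(I)} = \rho\, s_{(I)}$. The key observation is the decomposition $\norm{x - x'}_1 = |I|\,|\norm{x}_\infty - \rho| + \sum_{j \notin I} |x_j - x'_j|$; setting $a = |\norm{x}_\infty - \rho|$ and $b = \sum_{j \notin I} |x_j - x'_j|$ one has $a + b \leq \norm{x - x'}_1 < \nu_x$. I would then bound $\rho \geq \norm{x}_\infty - a$ and $\max_{j \notin I} |x'_j| \leq \max_{j \notin I} |x_j| + b$ and subtract, obtaining
\begin{equation*}
\rho - \max_{j \notin I} |x'_j| \;\geq\; \bigl(\norm{x}_\infty - \max_{j \notin I} |x_j|\bigr) - (a + b) \;>\; \delta\,\bigl(\norm{x}_\infty - \max_{j \notin I} |x_j|\bigr) \;>\; 0.
\end{equation*}
This forces $I(x') = I$ and, since $\rho > 0$, $\sign(x'_{(I)}) = s_{(I)}$, so $\T_{x'} = \T_x$, $e_{x'} = e_x$, $f_{x'} = f_x$ and $\antig_{f_{x'}} = \antig_{f_x}$; plugging these equalities into~\eqref{eq:lip-mu}--\eqref{eq:lip-xi} yields $\mu_x = \tau_x = \xi_x = 0$. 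The only delicate point in the whole argument is calibrating the slack $\delta \in\,]0,1]$ so that the factor $|I| \geq 1$ in the decomposition of $\norm{x-x'}_1$ is not wasted; no genuine obstacle is anticipated beyond that elementary perturbation bound.
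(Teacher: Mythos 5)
Your proposal is correct and follows essentially the same route as the paper: the structural formulas are imported from the worked $\linf$ example of Section~\ref{sec:decomposable-prop}, and the partial-smoothness constants are obtained by showing that within the $\lun$-ball of radius $\nu_x$ restricted to $\T_x$ the saturation support $I$ and the sign vector $s$ are locally constant, forcing $\T_{x'}=\T_x$, $e_{x'}=e_x$ and hence $\mu_x=\tau_x=\xi_x=0$. Your explicit decomposition $\norm{x-x'}_1=\abs{I}\,\abs{\norm{x}_\infty-\rho}+\sum_{j\notin I}\abs{x_j-x'_j}$ merely spells out the perturbation bound that the paper states without computation.
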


\subsubsection{Relation to previous work}

In the noiseless case, i.e. \eqref{eq:reg-noiseless} with $\J=\normi{\cdot}$, theoretical analysis of $\linf$-regularization goes back to the 70's through the work of~\cite{Cadzow71}. \cite{lyubarskii2010uncertainty} provided results that characterize signal representations with small (but not necessarily minimal) $\linf$-norm subject to linear constraints. A necessary and sufficient condition for a vector to be the unique minimizer of \eqref{eq:reg-noiseless} is derived in \cite{mangasarian2011probability}. The work of \cite{donoho2010counting} analyzes recovery guarantees by $\linf$-regularization in a noiseless random sensing setting.

The authors in \cite{studer12signal} analyzed the properties of solutions obtained from a constrained form of \eqref{eq:reg} with $\J=\normi{\cdot}$. In particular, they improved and generalized the bound of \cite{lyubarskii2010uncertainty} on the $\linf$ of the solution.

The work of \cite{bach2010structured,obozinski2012convex} studies robust recovery with regularization using a subclass of polyhedral norms obtained by convex relaxation of combinatorial penalties. Although this covers the case of the $\linf$-norm, their notion of support is however, completely different from ours. We will come back to this work with a more detailed discussion in Section~\ref{sec:ex-polyhedral}.



\subsection{Group Sparsity Regularization}
\label{sec:ex-block}

 Let's recall from Section~\ref{sec:convex-geom} that $\Bb$ is a uniform disjoint partition of $\{1,\cdots,N\}$, 
  \[
	\{1,\ldots,N\} = \bigcup_{b \in \Bb} b, \quad b \cap b' = \emptyset, ~  \forall b \neq b' ~.
  \]
  The $\lun-\ldeux$ norm of $x$ is 
  \begin{equation*}
    \J(x) = \norm{x}_{\Bb} = \sum_{b \in \Bb} \norm{x_b}.
  \end{equation*}
This prior has been advocated when the signal exhibits a structured sparsity pattern where the entries are assumed to be clustered in few non-zero groups; see for instance \cite{bakin1999adaptive,yuan2005model}. The corresponding regularized problem \eqref{eq:reg} is known as the group Lasso.

\subsubsection{Structure of the $\lun$-$\ldeux$ norm}

The $\lun-\ldeux$ norm is a symmetric partly smooth function relative to a linear manifold.
\begin{prop}\label{prop:ex-l1l2-swdg}
  The $\lun-\ldeux$ norm associated to the partition $\Bb$ is a symmetric (finite-valued) strong gauge with
  \begin{gather*}
    \T_x = \enscond{\eta}{ \forall j \notin I, \: \eta_j = 0 }, \quad
    \S_x = \enscond{\eta}{ \forall i \in I, \: \eta_i = 0 }, \\
    \e_x = ( \Nn(x_b) )_{b \in \Bb}, \quad
    \f_x = \e_x, \quad
    \Js = \norm{\cdot}_{\infty,2} + \iota_{\S_x} ~,
  \end{gather*}
  where $I = I(x) = \enscond{b}{x_b \neq 0}$, and $\Nn(a)=a/\norm{a}$ if $a\neq 0$, and $\Nn(0)=0$.
  Moreover, it is partly smooth relative to a linear manifold with
  \begin{equation*}
    \certCof = \norm{\cdot}_{\infty,2}, \quad
    \nu_x = (1-\delta)\umin{b \in I} \norm{x_b}, \delta \in ]0,1]
    \quad
    \mu_x = \frac{\sqrt{2}}{\nu_x}
    \qandq
    \tau_x = \xi_x = 0 .
  \end{equation*}
\end{prop}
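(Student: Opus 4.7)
The proof has two halves: first the strong-gauge structure, then the partly-smooth bookkeeping.

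For the strong-gauge part, I would start from the subdifferential already computed in the illustrative example in Section~\ref{sec:convex-geom}: writing $I=\{b:x_b\neq 0\}$ and $\Nn(a)=a/\|a\|$ for $a\neq 0$, $\partial J(x)=\{\eta:\eta_b=\Nn(x_b)\text{ for }b\in I,\ \|\eta_b\|\leq 1\text{ for }b\notin I\}$. This immediately gives $\bar\S_x$, $e_x=(\Nn(x_b))_{b\in\Bb}$, and the orthogonal pair $(\T_x,\S_x)$ of the statement. To check that $\J$ is a strong gauge I verify the two items of Definition~\ref{defn:strong-gauge}: (1)~$e_x\in\ri\partial J(x)$, because the equality constraints $\eta_b=\Nn(x_b)$ for $b\in I$ are satisfied and the inequalities $\|\eta_b\|\leq 1$ for $b\notin I$ are strict at $e_x$ (where those blocks vanish); (2)~separability along $\T_x\oplus\S_x$ follows from $J(x+x')=\sum_{b\in I}\|x_b\|+\sum_{b\notin I}\|x'_b\|$ when $x\in\T_x$ and $x'\in\S_x$. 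Choosing $f_x=e_x$ is then legitimate.

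For the polar description, I invoke the classical fact that the dual of $\|\cdot\|_\Bb$ is $\|\cdot\|_{\infty,2}=\max_b\|\cdot_b\|$. Proposition~\ref{prop:strong-dec} then yields $\partial J(x)-e_x=\{\eta\in\S_x:\|\eta\|_{\infty,2}\leq 1\}$, so the subdifferential gauge is exactly $\antigx=\|\cdot\|_{\infty,2}+\iota_{\S_x}$ as claimed. Symmetry of $J$ is direct.

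For partial smoothness relative to $\T_x$, I verify the three axioms of Definition~\ref{defn:psg}. Smoothness of $J|_{\T_x}$: on $\T_x$ the functional equals $x'\mapsto\sum_{b\in I}\|x'_b\|$, which is $\Cdeux$ at $x$ since every block $x_b$ with $b\in I$ is nonzero. Sharpness: the tangent to the linear manifold $\T_x$ at $x$ is $\T_x$. Continuity: on $\T_x$ near $x$ the support $I$ is locally constant, so $\partial J$ depends on $x'\in\T_x$ only through the smooth map $x'\mapsto(\Nn(x'_b))_{b\in I}$ together with a fixed $b\notin I$ block; Hausdorff-continuity is immediate.

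The only genuine work is the Lipschitz constants in Theorem~\ref{thm:lip-prg} with $\Gamma=\|\cdot\|_{\infty,2}$. I would choose $\delta\in\,]0,1]$, set $\nu_x=(1-\delta)\min_{b\in I}\|x_b\|$, and observe that if $x'\in\T_x$ with $\Gamma(x-x')<\nu_x$ then $\|x'_b\|\geq\delta\|x_b\|$ for every $b\in I$, so $\T_{x'}=\T_x$ and $f_{x'}=e_{x'}\in\T_x$. Since $f_x-f_{x'}\in\T_x$, $\proj_{\S_x}(f_x-f_{x'})=0$ and hence $\tau_x=0$; and since $J$ is a strong gauge with $\antigx=\|\cdot\|_{\infty,2}+\iota_{\S_x}$ depending on $x$ only through $\S_x=\S_{x'}$, the gauge is unchanged and $\xi_x=0$. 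The single nontrivial estimate is $\mu_x$: it reduces to a blockwise Lipschitz bound on the normalization map $\Nn$, which, using either the integral of $\|\nabla\Nn(y_t)\|\leq 1/\|y_t\|$ along the segment $y_t=(1-t)x_b+tx'_b$ or the identity $\|\Nn(y)-\Nn(y')\|^2\,\|y\|\|y'\|=\|y-y'\|^2-(\|y\|-\|y'\|)^2$, gives a bound of the form $\|\Nn(x_b)-\Nn(x'_b)\|\lesssim \|x_b-x'_b\|/\sqrt{\|x_b\|\|x'_b\|}$. Feeding in the lower bound $\|x'_b\|\geq\delta\|x_b\|$, taking the maximum over $b\in I$, and optimising $\delta$ yields $\mu_x=\sqrt{2}/\nu_x$. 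The main subtlety here is exactly this constant: choosing $\delta$ so that the Lipschitz constant of $\Nn$ and the loss from lower-bounding $\|x'_b\|$ balance out; apart from that the argument is bookkeeping.
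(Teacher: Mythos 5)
Your overall route is the same as the paper's: the structural half is read off from the subdifferential already computed in Sections~\ref{sec:convex-geom} and~\ref{sec:decomposable-special}, and the partial-smoothness half reduces to (a) local constancy of the block support on $\T_x$ inside the $\norm{\cdot}_{\infty,2}$-ball of radius $\nu_x$, (b) $\tau_x=\xi_x=0$ because the gauge is strong (so $f_{x}=e_{x}\in\T_x$ and the subdifferential gauge depends only on $\S_x$), and (c) a blockwise Lipschitz estimate on the normalization map $\Nn$. Items (a), (b) and the entire first half are correct and coincide with what the paper does.

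The one genuine gap is in your derivation of $\mu_x=\sqrt{2}/\nu_x$. Once you weaken the exact identity $\norm{\Nn(u)-\Nn(v)}^2\,\norm{u}\norm{v}=\norm{u-v}^2-(\norm{u}-\norm{v})^2$ to $\norm{\Nn(u)-\Nn(v)}\leq\norm{u-v}/\sqrt{\norm{u}\norm{v}}$ and feed in $\norm{x'_b}\geq\delta\norm{x_b}$ together with $\norm{x_b}\geq\nu_x/(1-\delta)$, you obtain the constant $(1-\delta)/(\sqrt{\delta}\,\nu_x)$, which exceeds $\sqrt{2}/\nu_x$ as soon as $\delta<2-\sqrt{3}$. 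The integral route fares no better: $\int_0^1\norm{(1-t)x_b+tx'_b}^{-1}\,dt$ produces a factor $-\log\delta$, again unbounded as $\delta\downarrow 0$. Moreover ``optimising $\delta$'' is not available: $\delta$ is a datum of the statement ($\nu_x$ itself depends on it) and the claim is that $\mu_x=\sqrt{2}/\nu_x$ for \emph{every} $\delta\in\,]0,1]$. To close the gap you must not discard the term $-(\norm{u}-\norm{v})^2$: keeping it, one checks directly that $\norm{u}\bigl(\norm{u-v}^2-(\norm{u}-\norm{v})^2\bigr)\leq 2\norm{v}\,\norm{u-v}^2$ whenever $\norm{u-v}\leq\norm{u}$ (writing $d=\norm{u-v}$ and $s=\norm{u}-\norm{v}$, the only nontrivial case $s>0$ follows from $d^2+s^2\geq 2ds$ and $d\leq\norm{u}$), which yields $\norm{\Nn(u)-\Nn(v)}\leq\sqrt{2}\,\norm{u-v}/\norm{u}$ and hence $\mu_x=\sqrt{2}/\nu_x$ after using $\norm{x_b}\geq\nu_x$. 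The paper instead proves a sharper lemma with constant $C_\rho=\tfrac{\sqrt{2}}{\rho}\sqrt{1-\sqrt{1-\rho^2}}\in\,]1,\sqrt{2}[$ for $\rho=1-\delta$, obtained by maximizing over the angle between $u$ and $v-u$; either argument works, but your intermediate weakening does not.
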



\subsubsection{Relation to previous work}

Theoretical guarantees of the group Lasso have been investigated by several authors under different performance criteria; see e.g.~\cite{yuan2005model,roth2008group,bach2008consistency,chesneau2008some,LiuZhang09,Wei10} to cite only a few. In particular, the author in~\cite{bach2008consistency} studies the asymptotic group selection consistency of the group Lasso in the overdetermined case, under a group irrepresentable condition. This condition also appears in noiseless identifiability in the work of \cite{candes2011simple}. The group irrepresentable condition is nothing but the specialization to the group Lasso of our condition based on $\IC(x_0)$. Indeed, using Definition~\ref{defn:ic} and Proposition~\ref{prop:ex-l1l2-swdg}, and assuming that $\Phi_{(I)}$ is full column rank (i.e. \eqref{eq:injT} is fulfilled), $\IC(x_0)$ reads
\eql{\label{eq-block-irrepres}
\IC(x_0) = \left\|\Phi_{(I^c)}^* \Phi_{(I)}^{+,*} \pa{ \tfrac{x_b}{\norm{x_b}} }_{b \in I}\right\|_{\infty,2} ~.
}

It is worth mentioning that the discrete isotropic total variation in $d$-dimension, $d \geq 2$, can be viewed as an analysis-type $\lun-\ldeux$ semi-norm. Partial smoothness and theoretical recovery guarantees with such a regularization can be retrieved from those of this paper using the results on the pre-composition rule given in Section~\ref{sec:alge-precomp}.


\subsection{Polyhedral Regularization}
\label{sec:ex-polyhedral}

The $\lun$ and $\linf$ norms are special cases of polyhedral priors.
There are two alternative ways to define a polyhedral gauge.
The $H$-representation encodes the gauge through the hyperplanes that support the polygonal facets of its unit level set.
The $V$-representation encodes the gauge through the vertices that are the extreme points of this unit level set.
We focus here on the $H$-representation.


\subsubsection{Structure of polyhedral gauges}
 
A polyhedral gauge in the $H$-representation is defined as 
\begin{equation*}
  \J(x) = \max_{1 \leq i \leq N_H} (\dotp{x}{h_i})_+
  = \J_0(H^* x)
  \qwhereq
  \J_0(u) = \max_{1 \leq i \leq N_H} (u_i)_+, 
\end{equation*}
and we have defined $H=(h_i)_{i=1}^{N_H} \in \RR^{N \times N_H}$.

Such a polyhedral gauge can also be thought as an analysis gauge as considered in Section~\ref{sec:alge-precomp} by identifying $D=H$. One can then characterize decomposability and partial smoothness relative to a linear manifold of $\J_0$ and then invoke Proposition~\ref{prop:analysis-dec-form} and \ref{prop:gauge-analysis-stable} to derive those of $\J$. This is what we are about to do. In the following, we denote $(a^i)_{1 \leq i \leq N_{H}}$ the standard basis of $\RR^{N_H}$. 

\begin{prop}\label{prop:ex-polyh-swdg}
  $\J_0(u) = \max_{1 \leq i \leq N_H} (u_i)_+$ is a (finite-valued) gauge and,
  \begin{itemize}
  \item If $u_i \leq 0$, $\forall i \in \ens{1,\cdots,N_H}$, then
  \begin{gather*}
    \S_u = \Span\pa{a^i}_{i \in I_0}, \quad
    \T_u = \Span\pa{a^i}_{i \notin I_0}, \\
    \e_u = 0, \quad
    \f_{u} = \mu \sum_{i \in I_0} a^i, \, \text{ for any } \, 0 < \mu < 1, \\
    \antig_{\f_u}(\eta) = \inf_{\tau \geq \max_{i \in I_0}\pa{-\eta_i}_+/\mu}\max\bpa{\tau\mu\abs{I_0}+\sum_{i \in I_0}\eta_i,\tau} \qforq \eta \in \S_u ~,
  \end{gather*}
  where
  \begin{equation*}
    I_0 = \enscond{i \in \ens{1,\cdots,N_H}}{ u_i = \J_0(u) = 0 } ~.
  \end{equation*}
  \item If $\exists i \in \ens{1,\cdots,N_H}$ such that $u_i > 0$, then
  \begin{gather*}
    \S_u = \enscond{\eta}{ \eta_{(I_+^c)} = 0 \qandq \dotp{\eta_{(I_+)}}{s_{(I_+)}}=0 }, \\
    \T_u = \enscond{\alpha}{ \alpha_{(I_+)} = \mu s_{(I_+)} \qforq \mu \in \RR }, \\
    e_u = \frac{s}{\abs{I_+}}, \quad
    \f_{u} = e_u, \quad
    \antig_{\f_u}(\eta) =  \umax{i \in I_+} (-\abs{I_+} \eta_i)_+ \qforq \eta \in \S_u ~,
  \end{gather*}
  where
  \begin{equation*}
    s = \sum_{i \in I_+} a^i \qandq I_{+} = \enscond{i \in \ens{1,\cdots,N_H}}{ u_i = \J_0(u) \qandq u_i > 0  } ~.
  \end{equation*}
  \end{itemize}
  Moreover, it is partly smooth relative to a linear manifold with parameters (assuming $I_+ \neq \emptyset$)
  \begin{equation*}
    \nu_u = (1-\delta)\big(\max_{i \in I_+} u_i - \umax{j \notin I_+, u_j > 0} u_j\big), \delta \in ]0,1]
    \qandq
    \mu_u = \tau_u = \xi_u = 0 .
  \end{equation*}
\end{prop}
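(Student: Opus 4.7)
The plan is to identify $\J_0(u) = \max_{1 \leq i \leq N_H}(u_i)_+$ as the support function of the standard simplex $\Delta = \conv\{0, a^1, \dots, a^{N_H}\}$ in $\RR^{N_H}$, since $(u_i)_+ = \sigma_{[0,a^i]}(u)$ and the pointwise max of support functions is the support function of the convex hull. This immediately gives that $\J_0$ is sublinear, non-negative and finite-valued (hence a gauge with full domain), and it pins down the subdifferential via the classical identity $\partial \sigma_\Delta(u) = \argmax_{v \in \Delta}\dotp{v}{u}$. The whole proof is then a face-by-face computation of this argmax, followed by verification of the partial smoothness axioms.

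For Case 1 ($u_i \le 0$ for all $i$), the maximum of $\dotp{v}{u}$ over $\Delta$ is $0$, attained on the face $F_0 = \conv\{0,\,a^i : i \in I_0\}$, so $\partial \J_0(u) = F_0$. The affine hull of $F_0$ passes through the origin, giving $\bar\S_u = \Span(a^i)_{i \in I_0}$ and, after projecting $0$ onto it, $e_u = 0$, $\S_u = \Span(a^i)_{i\in I_0}$ and $\T_u = \S_u^\perp = \Span(a^i)_{i \notin I_0}$. I would pick $f_u = \mu \sum_{i \in I_0} a^i$ inside $\ri F_0$ (which requires $\mu|I_0| < 1$) and then compute $\antig_{f_u} = \gauge_{F_0 - f_u}$ directly: after the translation, $F_0 - f_u = \{w \in \S_u : w_i \ge -\mu,\ \sum_{i \in I_0} w_i \le 1 - \mu|I_0|\}$, and the definition of the gauge turns into the infimum of $\tau > 0$ satisfying $\tau \ge (-\eta_i)_+/\mu$ together with $\tau\mu|I_0| + \sum \eta_i \le \tau$; packaging these two constraints into an $\inf$–$\max$ form yields the stated expression. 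For Case 2, the active set $I_+$ selects the face $\partial\J_0(u) = \conv(a^i)_{i \in I_+}$; its affine hull is the hyperplane $\{v : v_{I_+^c}=0,\ \sum_{I_+} v_i = 1\}$, whose closest point to the origin is the barycenter $e_u = s/|I_+|$, yielding the stated $\S_u$, $\T_u$, and $f_u = e_u$. The subdifferential gauge is obtained by the same translation trick: $\partial\J_0(u) - e_u = \{w \in \S_u : w_i \ge -1/|I_+|,\ i \in I_+\}$, whose gauge on $\S_u$ is $\max_{i \in I_+}(-|I_+|\eta_i)_+$.

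For the partial smoothness statement, the model manifold is $\Mm = u + \T_u$ and its tangent space is $\T_u$ by construction, which gives sharpness; $\J_0$ is affine on $\Mm$ locally (in Case 2 it equals $v \mapsto (s^* v)/|I_+|$ restricted to $\Mm$; in Case 1 it is identically zero there), so the smoothness axiom is immediate. Continuity of the set-valued map $\partial \J_0$ at $u$ relative to $\Mm$ reduces to showing that the active sets $I_0$ or $I_+$ remain unchanged when we perturb $u$ along $\T_u$ by less than $\nu_u$, which is a direct consequence of how $\nu_u$ is defined as the gap between active and inactive coordinates. The key quantitative point is that on this neighborhood $I_+$ (resp. $I_0$ and the selected $\mu$) is frozen, so the maps $u' \mapsto e_{u'}$, $u' \mapsto \proj_{\S_{u'}} f_{u'}$, $u' \mapsto \antig_{f_{u'}}$ are \emph{constant}; hence the Lipschitz constants $\mu_u,\tau_u,\xi_u$ can all be taken to be zero. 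The main technical obstacle I expect is getting the stated closed form for $\antig_{f_u}$ in Case 1 correct — handling the two binding constraints of the translated simplex and exhibiting the $\inf$ over $\tau$ needs some care — but the rest is routine polyhedral bookkeeping.
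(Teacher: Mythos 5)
Your proof follows the same architecture as the paper's: an explicit face-by-face computation of $\partial\J_0(u)$, from which $\S_u$, $\T_u$, $e_u$, $\f_u$ are read off, a reduction of Case~2 to the $\linf$-norm computation already carried out in Sections~\ref{sec:convex-geom} and~\ref{sec:decomposable-prop}, and partial smoothness obtained from the local constancy of the active set on the $\nu_u$-ball inside $\T_u$, which forces $\mu_u=\tau_u=\xi_u=0$. Your identification of $\J_0$ as the support function of $\mathrm{conv}\{0,a^1,\dots,a^{N_H}\}$ is a tidier route to the subdifferential that the paper simply writes down, and your observation that $\f_u=\mu\sum_{i\in I_0}a^i$ belongs to $\ri\partial\J_0(u)$ only when $\mu\abs{I_0}<1$ is a correct sharpening of the statement.

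The one step that does not go through is your claim that the direct computation of $\gauge_{\partial\J_0(u)-\f_u}$ in Case~1 ``yields the stated expression''. Your description of the translated face is correct, $\partial\J_0(u)-\f_u=\enscond{w\in\S_u}{w_i\geq-\mu \ \forall i\in I_0,\ \sum_{i\in I_0}w_i\leq 1-\mu\abs{I_0}}$, and unwinding the definition of the gauge gives
\begin{equation*}
\antig_{\f_u}(\eta)
=\inf\enscond{\tau>0}{\tau\geq\umax{i\in I_0}\tfrac{(-\eta_i)_+}{\mu}\ \text{ and }\ \tau\mu\abs{I_0}+\textstyle\sum_{i\in I_0}\eta_i\leq\tau}
=\max\Big(\umax{i\in I_0}\tfrac{(-\eta_i)_+}{\mu},\ \tfrac{(\sum_{i\in I_0}\eta_i)_+}{1-\mu\abs{I_0}}\Big),
\end{equation*}
a \emph{constrained} infimum of $\tau$, not the infimum over $\tau$ of the maximum of the two quantities. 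The two do not coincide: for $\abs{I_0}=1$, $\mu=1/2$ and $\eta=1/2$, the set $\partial\J_0(u)-\f_u$ is $[-1/2,1/2]$, whose gauge at $1/2$ equals $1$, while the displayed $\inf$--$\max$ formula evaluates to $1/2$ (attained at $\tau=0$). So the ``packaging'' step would fail if carried out. The paper reaches its formula differently, by instantiating Proposition~\ref{prop:convex-basics-decompos-gauge}(iii), $\antig_{\f_u}(\eta)=\inf_{\tau\geq0}\max(\J_0^\circ(\tau\f_u+\eta),\tau)$; but the derivation of that identity splits the constraint $\sigma_{C^\circ}(v)-\dotp{\f_u}{v}\leq1$ as a union over $\rho\in[0,1]$ of $\{\sigma_{C^\circ}(v)\leq\rho\}\cap\{-\dotp{\f_u}{v}\leq1-\rho\}$, which is legitimate only when $-\dotp{\f_u}{v}\geq 0$, i.e.\ in effect when $\proj_{\S_u}\f_u=0$; otherwise it computes the gauge of $\partial\J_0(u)+\enscond{-t\f_u}{t\in[0,1]}$ rather than of the translate. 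Case~1 is precisely the situation $\f_u\in\S_u\setminus\{0\}$ where the two routes diverge. Your direct computation is the trustworthy one; carry it to its actual closed form and flag the mismatch with the displayed expression rather than asserting agreement. The remainder of your proposal (Case~2 and the partial smoothness constants) matches the paper's argument.
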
   


\subsubsection{Relation to previous works}

As stated in the case of $\linf$-norm, the work of of \cite{bach2010structured} considers robust recovery with a subclass of polyhedral norms but his notion of support is different from ours. The work~\cite{petry2012shrinkage} studies numerically some polyhedral regularizations. Again in a compressed sensing scenario, the work of~\cite{parrilo2010convex} studies a subset of polyhedral regularizations to get sharp estimates of the number of measurements for exact and $\ell^2$-stable recovery. The closest work to ours is that reported in~\cite{vaiter13polyhedral}, where theoretical recovery guarantees by polyhedral regularization were provided under similar conditions to ours and with the same notion of support as considered above. However only finite-valued coercive polyhedral gauges were considered there.


  



\subsection{A Counter-Example: the Nuclear Norm}

The nuclear norm is the natural extension of $\ell^1$ sparsity to matrix-valued data $x \in \RR^{N_0 \times N_0}$ (where $N=N_0^2$). We denote $x = V_{x} \diag(\Lambda_{x}) U_{x}^*$ an SVD decomposition of $x$, where $\Lambda_{x} \in \RR_+^{N_0}$. Note that this can be extended easily to rectangular matrices. The nuclear norm imposes such a sparsity and is defined as 
\eq{
	J(x) = \norm{x}_* = \norm{\Lambda_{x}}_1, 
}
see~\cite{2014-vaiter-ps-stability} and the reference therein. 
This norm can be shown to be partly smooth (in the sense of Definition~\ref{defn:psg}) at some $x$ with respect to the set $\Mm = \enscond{x'}{\rank(x)=\rank(x')}$ that is locally a manifold around $x$. This manifold is however not a linear space, hence one does not have $\Mm = T_x$. This shows that the nuclear norm is not in the set $\PSFL_x$ of fonctions that are partly smooth with respect to a linear manifold (in the sense of Definition~\ref{def:PRG}). In particular, Theorem~\ref{thm:local-stab} cannot be applied to this functional.

It is however possible to show that the manifold $\Mm$ associated to $x$ is stable to small noise perturbation in the observation under the same hypotheses as Theorem~\ref{thm:local-stab}. This result is proved in~\cite{2014-vaiter-ps-stability}, which extends the previous result of Bach~\cite{bach2008trace}. Note however that these proofs do not give explicit stability constants, on the contrary to Theorem~\ref{thm:local-stab}.




\section{Case Study: Compressed Sensing with $\linf$ Regularization}
\label{sec:cs}


In this section, based on the generalized irrepresentable condition, we provide a bound for the sampling complexity to guarantee exact and stable recovery of the model subspace $T_{x_0}$ of antisparsity minimization from noisy Gaussian measurements.

\begin{thm}\label{thm:linf-cs}
Let $x$ be an arbitrary vector with its saturation support $I$, its model tangent subspace $\T_x=\S_x^\perp$ and model vector $\e_x$ as defined above.
Let $\beta > 1$.
For $\Phi$ drawn from the standard Gaussian ensemble with
\begin{equation*}
  Q \geq N - \abs{I} + 2\beta \abs{I} \log(\abs{I}/2) ~,
\end{equation*}
$\IC(x) < 1$ with probability at least $1-2(\abs{I}/2)^{-f(\beta,\abs{I})}$ where
\begin{equation*}
  f(\beta,\abs{I}) = 
  \left(
    \sqrt{\frac{\beta}{2\abs{I}}+\beta-1} - \sqrt{\frac{\beta}{2\abs{I}}}
  \right)^2 ~.
\end{equation*}
\end{thm}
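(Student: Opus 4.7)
The strategy is two algebraic reductions followed by a sharp tail bound. For the first reduction, Proposition~\ref{prop:ex-linf-swdg} yields $\T_x = \{\alpha : \alpha_{(I)} = \rho s_{(I)}\}$, $\S_x = \T_x^\perp$, $e_x = s/\abs{I} = f_x$, and $\antig_{f_x}(\eta) = \umax{i \in I}(-\abs{I}s_i\eta_i)_+$ on $\S_x$. Since $f_x \in \T_x$, $\proj_{\S_x}f_x = 0$, so by Definition~\ref{defn:ic}, $\IC(x) = \antig_{f_x}(\Phi_{\S_x}^* w)$ for $w := \Phi_{\T_x}^{+,*} e_x$. Writing $u := \Phi^* w$, the identity $\Phi_{\T_x}^* w = e_x$ forces $u_j = 0$ for $j \in I^c$ and $\dotp{u_{(I)}}{s_{(I)}} = 1$; plugging the explicit form of $\proj_{\S_x} u$ into $\antig_{f_x}$ collapses $\IC(x) < 1$ to the clean sign condition $s_i\dotp{\phi_i}{w} > 0$ for every $i \in I$, where $\phi_i$ denotes the $i$-th column of $\Phi$.

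Next I would extract the Gaussian structure. The sign-flip $\psi_i := s_i\phi_i$ preserves the standard Gaussian distribution, and $\phi_s := \sum_{i\in I}\psi_i = \Phi s$. From $w \in \Im(\Phi_{\T_x}) = \Span\{\phi_s,\,\phi_j : j \in I^c\}$ together with $w \perp \phi_j$ for $j \in I^c$ and $\dotp{\phi_s}{w}=1$, one obtains the closed form $w = P\phi_s / \norm{P\phi_s}^2$, where $P$ is the orthogonal projector onto $(\Span\{\phi_j\}_{j \in I^c})^\perp$. Setting $\eta_i := P\psi_i$ and $\bar\eta := \abs{I}^{-1}\sum_{i\in I}\eta_i$, a short computation yields $s_i\dotp{\phi_i}{w} = \dotp{\eta_i}{\bar\eta}/(\abs{I}\norm{\bar\eta}^2)$. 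Conditioning on $\Phi_{(I^c)}$, which almost surely has rank $N-\abs{I}$, the projector $P$ has rank $r = Q - N + \abs{I}$, and in an orthonormal basis of $\Im(P)$ the $(\eta_i)_{i \in I}$ become i.i.d.\ standard Gaussian in $\RR^r$. Since the event is rotation-invariant, the conditioning drops, and the problem reduces to upper bounding $\Pr[\exists\, i : \dotp{\eta_i}{\bar\eta}\leq 0]$ for $\eta_1,\ldots,\eta_{\abs{I}}$ i.i.d.\ $\mathcal{N}(0,\Id_r)$.

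For the tail step, I would decompose $\eta_i = \bar\eta + \delta_i$ and exploit the classical independence of $\bar\eta$ from the residuals $(\delta_i)_{i \in I}$; note $\abs{I}\norm{\bar\eta}^2 \sim \chi_r^2$. The event $\{\dotp{\eta_i}{\bar\eta}\leq 0\}$ rewrites as $\{\dotp{\delta_i}{\bar\eta} \leq -\norm{\bar\eta}^2\}$, and rotational invariance of $\delta_i \sim \mathcal{N}(0,(1-1/\abs{I})\Id_r)$ reduces its marginal analysis to $\{Z \leq -R\sqrt{\abs{I}/(\abs{I}-1)}\}$, where $Z\sim\mathcal{N}(0,1)$ is independent of $R := \norm{\bar\eta}$. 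A union bound then gives
\[
  \Pr[\IC(x) \geq 1] \;\leq\; \tfrac{\abs{I}}{2}\,\Pr\!\big[(\abs{I}-1)Z^2 \geq \chi_r^2\big] .
\]
Splitting the right-hand event at an intermediate level $s$, combining the Gaussian bound $\Pr[Z^2 \geq s/(\abs{I}-1)] \leq \exp(-s/(2(\abs{I}-1)))$ with the Laurent--Massart lower-tail estimate $\Pr[\chi_r^2 \leq s] \leq \exp(-(\sqrt{r}-\sqrt{s})^2/2)$ valid for $s \leq r$, and choosing $s$ so that the two exponents coincide, produces a final bound of the stated form $2(\abs{I}/2)^{-f(\beta,\abs{I})}$ as soon as $r \geq 2\beta\abs{I}\log(\abs{I}/2)$.

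The two reduction steps are essentially linear algebra and follow routinely from Proposition~\ref{prop:ex-linf-swdg} together with the closed-form precertificate of Lemma~\ref{lem:closed-form-cef}. The technical heart is the concentration step: extracting the precise difference-of-square-roots form of $f(\beta,\abs{I})$ demands the sharp Laurent--Massart lower-tail bound on $\chi_r^2$ rather than a crude MGF estimate, and the careful balancing of the two exponents so that the final exponent consolidates into $(\sqrt{\beta/(2\abs{I})+\beta-1} - \sqrt{\beta/(2\abs{I})})^2 \log(\abs{I}/2)$. The scaling $Q \geq N - \abs{I} + 2\beta\abs{I}\log(\abs{I}/2)$ is exactly what makes this balance possible, with the asymptotic $f(\beta,\abs{I}) \to \beta-1$ confirming the expected logarithmic sampling complexity in $\abs{I}$.
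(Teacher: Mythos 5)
Your proposal follows the same overall strategy as the paper's proof: reduce $\IC(x)<1$ to a pointwise condition on the minimal $\ldeux$-norm precertificate $\ce_F=\Phi_{\T_x}^{+,*}e_x$, exploit the independence of $\Phi\proj_{\T_x}$ and $\Phi\proj_{\S_x}$ under the Gaussian ensemble, and then balance a union-bounded Gaussian upper tail against a $\chi^2$ lower tail with $q=Q-N+\abs{I}$ degrees of freedom. Your two reductions are correct and in fact more careful than the paper's: the closed form $\ce_F=P\phi_s/\norm{P\phi_s}^2$, the collapse of $\IC(x)<1$ to $s_i\dotp{\phi_i}{\ce_F}>0$, and the sample-mean/residual decomposition correctly track the correlations induced by $\proj_{\S_x}$ (hence the variance factor $\abs{I}-1$), whereas the paper asserts that the entries of $\Phi_{\S_x}^*\ce_F$ are conditionally i.i.d.\ $\Nn(0,\norm{\ce_F}^2)$, which is only an (harmless) over-estimate of the true marginal variance $(1-1/\abs{I})\norm{\ce_F}^2$. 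Note also that $\abs{I}\norm{\bar\eta}^2=\norm{P\phi_s}^2=1/(\abs{I}\norm{\ce_F}^2)$, so your $\chi^2_r$ variable is exactly the paper's inverse-Wishart quadratic form in disguise.

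The one place the write-up falls short of a proof is the final constant. You assert that balancing your two exponents ``consolidates into'' the stated $f(\beta,\abs{I})$, but with the tail bounds you quote it does not come out verbatim. The paper's $f$ is obtained from the specific pair $\Pr(Z\geq a)\leq\tfrac12 e^{-a^2/2}$ with variance $\norm{\ce_F}^2$ (not $(1-1/\abs{I})\norm{\ce_F}^2$) and the lower-tail form $\Pr(\chi_q^2\leq q-2\sqrt{qu})\leq e^{-u}$, i.e.\ $\Pr(\chi_q^2\leq s)\leq\exp(-(q-s)^2/(4q))$; the exponent $f(\beta,\abs{I})\log(\abs{I}/2)$ is precisely $t^2/(4q)$ at the balancing point $t$. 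Your form $\exp(-(\sqrt r-\sqrt s)^2/2)$ is a different (also valid) lower-tail estimate, and combined with the factor $\abs{I}-1$ it yields a comparable but not identical exponent, so the claimed ``difference-of-square-roots'' expression does not simply drop out of your chain. Either align your tail bounds with the paper's to land on the stated $f$, or carry out the balance explicitly and accept a different explicit $f'(\beta,\abs{I})$ with the same $\beta-1$ asymptotics; as written, this last step is asserted rather than derived.
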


The above bound and probability bears some similarities to what we get with $\lun$ minimization, except that now the probability of success scales in a power of $\abs{I}$ and not $N$ directly. The reason underlying such a similarity is the proof technique usual in compressed sensing-type bounds and the use of the minimal $\ldeux$-norm dual certificate. 

The map $f(\beta,\abs{I})$ is an increasing function of $\abs{I}$, so that $\lim_{\abs{I} \to \infty} f(\beta,\abs{I})=\beta-1$ and the probability of success increases with increasing size of the saturation support. But this comes at the price of a stronger requirement on the number of measurements.

For the noiseless problem \eqref{eq:reg-noiseless}, it can be shown using arguments based on the statistical dimension~\cite{amelunxen2013living} of the descent cone of the $\linf$-norm that there is a phase transition exactly at $N - \abs{I}/2$, see also~\cite[Proposition 3.12]{parrilo2010convex}. The reason is that each face of the descent cone of the hypercube at a point living on its $k$-dimensional face is the direct sum of a subspace (the linear hull of the face), and of an orthant of dimension $N-k$ (up to an isometry). The statistical dimension is then $(N-k)/2+k=(N+k)/2=N-\abs{I}/2$, observing that $k=N-\abs{I}$.



\section{Conclusion}
\label{sec:conclusion}

In this paper, we introduced the notion of partly smooth function relative to a linear manifold as a generic convex regularization framework, and presented a unified view to derive exact and robust recovery guarantees for a large class of convex regularizations. In particular, we provided sufficient conditions ensuring uniqueness of the minimizer to both \eqref{eq:reg} and \eqref{eq:reg-noiseless}, whose by-product is to guarantee exact recovery of the original object $x_0$ in the noiseless case by solving \eqref{eq:reg-noiseless}. In presence of noise, sufficient sharp conditions were given to certify exact recovery of the model subspace underlying $x_0$. As shown in the considered examples, these results encompass a variety of cases extensively studied in the literature (e.g. $\lun$, analysis $\lun$, $\lun-\ldeux$), as well as less popular ones ($\linf$, polyhedral).
We exemplified the usefulness of this analysis by providing a sampling complexity bound for exact support recovery in $\linf$ regularization from Gaussian measurements.



  \appendix

\section{Gauges and their Polars}
\label{sec:app-gauge}

\subsubsection{Definitions and main properties}

We start by collecting some important properties of gauges and their polars. A comprehensive account on them can be found in~\cite{rockafellar1996convex}.

\begin{lem}\label{lem:convex-equivalence}
{~\\}\vspace*{-0.5cm}
\begin{enumerate}[(i)]
\item $\gauge_C$ is a non-negative, lsc and sublinear function.
\item $C$ is the unique closed convex set containing the origin such that
\[
C = \enscond{x \in \RR^N}{\gamma_C(x) \leq 1} .
\]
\item $\gamma_C$ is finite everywhere if, and only if, $0 \in \interop C$, in which case $\gamma_C$ is continuous.
\item $\Ker \gamma_C = \{0\}$ if, and only if, $C$ is compact.
\item $\gamma_C$ is finite and coercive on $\dom \gamma_C = \Lin C$ if, and only if, $C$ is compact and $0 \in \ri C$. In particular, $\gamma_C$ is finite everywhere and coercive if, and only if, $C$ is compact and $0 \in \interop C$.
\end{enumerate}
\end{lem}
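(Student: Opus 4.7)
The plan is to prove (i)--(v) in sequence, since each builds on the previous ones. All arguments rest on the definition $\gauge_C(x) = \inf\{\lambda > 0 : x \in \lambda C\}$ and on the standing hypothesis that $C$ is a non-empty closed convex set containing the origin.

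For (i), non-negativity is immediate. Positive homogeneity follows by the change of variable $\lambda \mapsto \lambda/\rho$ in the defining infimum. Subadditivity comes from convexity of $C$: if $x_i \in \lambda_i C$ for $i = 1, 2$, then $x_1 + x_2 \in (\lambda_1 + \lambda_2) C$ by the convex combination of the representatives. Lower semicontinuity is equivalent to closedness of the epigraph $\{(x,t) : t \geq \gauge_C(x)\}$, which follows from closedness of $C$. For (ii), the inclusion $C \subseteq \{\gauge_C \leq 1\}$ is trivial. Conversely, if $\gauge_C(x) < 1$ one picks $\lambda \in (\gauge_C(x), 1)$ with $x = \lambda c$, $c \in C$, and invokes convexity via $x = \lambda c + (1-\lambda)\,0 \in C$; the boundary case $\gauge_C(x) = 1$ is handled by closedness of $C$ applied to the sequence $x/\lambda_n \in C$ with $\lambda_n \downarrow 1$. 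Uniqueness of the set with prescribed gauge is then immediate from the characterization.

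For (iii), if $B(0, r) \subseteq C$ then $r x / \|x\| \in C$, hence $\gauge_C(x) \leq \|x\|/r < \infty$; conversely, any finite-valued convex function on $\RR^N$ is continuous (standard), so $C = \gauge_C^{-1}([0,1])$ is a neighborhood of $0$ because $\gauge_C(0) = 0 < 1$. For (iv), $\gauge_C(d) = 0$ with $d \neq 0$ gives $d/\lambda \in C$ for all $\lambda > 0$, so $C$ is unbounded; conversely, an unbounded closed convex set in $\RR^N$ has a non-trivial recession cone (classical), and when $0 \in C$ any recession direction $d$ satisfies $\RR_+ d \subseteq C$, whence $\gauge_C(d) = 0$.

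For (v), I would apply (iii) and (iv) within the subspace $\Lin C$ (which equals $\Aff C$ since $0 \in C$). For $x \notin \Lin C$, $\gauge_C(x) = +\infty$ because every $\lambda C$ stays inside $\Lin C$, so automatically $\dom \gauge_C \subseteq \Lin C$. Within $\Lin C$, (iii) applied intrinsically identifies finiteness of $\gauge_C$ with $0 \in \ri C$. Coercivity of a finite sublinear function on the finite-dimensional space $\Lin C$ reduces, via compactness of the relative unit sphere, to having trivial kernel there (one obtains $\gauge_C(x) \geq c\|x\|$ for $x \in \Lin C$), which by (iv) applied inside $\Lin C$ is equivalent to $C$ being bounded hence compact. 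The ``in particular'' case is just $\Lin C = \RR^N$. The one delicate point is (v), where one must make sure that (iii) and (iv) localize correctly to $\Lin C$; but this is a routine adaptation, since $\gauge_C$ is identically $+\infty$ off $\Lin C$, so its behaviour on $\Lin C$ is exactly that of the gauge of $C$ viewed intrinsically in $\Lin C$.
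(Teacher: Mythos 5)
Your proof is correct. It does not conflict with the paper so much as fill it in: for items (i)--(iv) the paper gives no argument at all, simply citing Theorem~V.1.2.5, Corollary~V.1.2.6 and Proposition~IV.3.2.5 of Hiriart-Urruty and Lemar\'echal, whereas you reconstruct the standard direct proofs (convex-combination argument for subadditivity, closedness of the epigraph from closedness of $C$, the sublevel-set characterization via $x=\lambda c+(1-\lambda)0$, continuity of finite convex functions for the converse of (iii), and the recession-cone argument for (iv)); these are exactly the arguments underlying the cited results, so nothing is gained or lost mathematically, only self-containedness. For item (v), the only part the paper actually proves, your route coincides with theirs: derive the ``in particular'' statement by combining (iii) and (iv) --- you make explicit the step they leave implicit, namely that for a finite, hence continuous, sublinear function on a finite-dimensional space, coercivity is equivalent to a trivial kernel via the minimum over the unit sphere --- and then obtain the general statement by restricting to $\dom\gamma_C=\Lin C$ ($=\Aff C$ since $0\in C$), using that $\gamma_C\equiv+\infty$ off $\Lin C$ so that (iii) and (iv) localize intrinsically. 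Your flagged ``delicate point'' is indeed the only place requiring care, and your justification of it is sound.
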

\begin{proof}
  (i)-(iii) are obtained from \cite[Theorem~V.1.2.5]{hiriart1996convex}. (iv) is obtained by combining \cite[Corollary~V.1.2.6 and Proposition~IV.3.2.5]{hiriart1996convex}. (v): the second statement follows by combining (iii)-(iv), while the first part is the second one written in $\dom \gamma_C = \Aff C = \Lin C$ since $0 \in \ri C$.
\end{proof}

Lemma~\ref{lem:convex-equivalence}(ii) is fundamental result of convex analysis that states that there is a one-to-one correspondence between gauge functions and closed convex sets containing the origin. This allows to identify sets from their gauges, and vice versa.

$\gamma_C$ is a norm, having $C$ as its unit ball, if and only if $C$ is bounded with nonempty interior and symmetric. When $C$ is only symmetric with nonempty interior, then $\gamma_C$ becomes a semi-norm.\\

Let us now turn to the polar of a convex set and a gauge. 
\begin{defn}[Polar set]\label{defn:convex-polarset}
  Let $C$ be a non-empty convex set. The set $C^\circ$ given by
  \begin{equation*}
    C^\circ = \enscond{v \in \RR^N}{\dotp{v}{x} \leq 1 \text{ for all } x \in C}
  \end{equation*}
  is called the \emph{polar} of $C$.
\end{defn}
$C^\circ$ is a closed convex set containing the origin. When the set $C$ is also closed and contains the origin, then it coincides with its bipolar, i.e. $C^{\circ\circ}=C$.
 
We are now in position to define the polar gauge.
\begin{defn}[Polar Gauge]\label{defn:polar-gauge}
The polar of a gauge $\gamma_C$ is the function $\gamma_C^\circ$ defined by
\[
	\gamma_C^\circ(u) = \inf\enscond{\mu \geq 0}{\dotp{x}{u} \leq \mu \gamma_C^\circ(x), \forall x} ~.
\] 
\end{defn}
Observe that gauges polar to each other have the property
\begin{equation*}
    \dotp{x}{u} \leq \gauge_C(x) \gauge_C^\circ(u) \quad \foralls (x,u) \in \dom \gauge_C \times \dom \gauge_C^\circ ~,
  \end{equation*}
just as dual norms satisfy a duality inequality. In fact, polar pairs of gauges correspond to the best inequalities of this type.

\begin{lem}\label{lem:convex-polar-gauge}
  Let $C \subseteq \RR^N$ be a closed convex set containing $0$.
  Then,
  \begin{enumerate}[(i)]
  \item $\gauge_C^\circ$ is a gauge function and $\gamma_C^{\circ\circ}=\gamma_C$.
  \item $\gamma_C^\circ=\gamma_{C^\circ}$, or equivalently
  \[
  C^\circ = \enscond{x \in \RR^N}{\gamma_C^\circ(x) \leq 1} = \enscond{x \in \RR^N}{\gamma_{C^\circ}(x) \leq 1}.
  \]
\item The gauge of $C$ and the support function of $C$ are mutually polar, i.e.
  \begin{equation*}
    \gauge_C = \sigma_{C^\circ} \qandq \gauge_{C^\circ} = \sigma_C ~.
  \end{equation*}
\end{enumerate}
\end{lem}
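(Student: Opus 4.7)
\textbf{Proposal for Lemma~\ref{lem:convex-polar-gauge}.} My plan is to prove the three parts in the order (iii), (ii), (i), since the first identity of (iii) is essentially the bipolar theorem for sets reformulated in terms of gauges, and once it is established the rest follows by short algebraic manipulations. The key structural fact I will lean on is Lemma~\ref{lem:convex-equivalence}(ii), which identifies $C$ with the sublevel set $\{x : \gamma_C(x) \leq 1\}$, together with the bipolar theorem $C^{\circ\circ} = C$ (valid here since $C$ is closed, convex, and contains $0$).

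\medskip

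First I would prove $\gauge_C = \sigma_{C^\circ}$. Fix $x \in \RR^N$ and $\mu > 0$. By Lemma~\ref{lem:convex-equivalence}(ii), $\gauge_C(x) \leq \mu$ iff $x/\mu \in C$. By the bipolar theorem, $x/\mu \in C = C^{\circ\circ}$ iff $\langle u, x/\mu\rangle \leq 1$ for every $u \in C^\circ$, equivalently $\langle u, x\rangle \leq \mu$ for every $u \in C^\circ$, which is exactly $\sigma_{C^\circ}(x) \leq \mu$. Taking the infimum over admissible $\mu$ gives the equality. Applying this identity with $C$ replaced by $C^\circ$ (which is itself closed, convex, and contains $0$) then yields $\gauge_{C^\circ} = \sigma_{C^{\circ\circ}} = \sigma_C$, completing (iii). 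The only delicate point is the case when $\gauge_C(x) = 0$; this means $x$ belongs to the recession kernel of $C$, and the equivalence still holds provided we allow the value $+\infty$ on the support-function side, which I will verify by a short separate argument using positive homogeneity ($tx \in C$ for all $t > 0$ forces $\dotp{u}{x} \leq 0$ for every $u \in C^\circ$).

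\medskip

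Next I would derive (ii). Unwinding the definition of the polar gauge and using positive homogeneity of $\gauge_C$,
\begin{equation*}
  \gamma_C^\circ(u)
  = \inf\bigl\{\mu \geq 0 : \dotp{x}{u} \leq \mu\gauge_C(x) \; \foralls x\bigr\}
  = \sup_{\gauge_C(x) \leq 1} \dotp{x}{u}.
\end{equation*}
By Lemma~\ref{lem:convex-equivalence}(ii) the constraint set $\{x : \gauge_C(x) \leq 1\}$ coincides with $C$, so this supremum is precisely $\sigma_C(u)$. The part of (iii) just established then gives $\sigma_C = \gauge_{C^\circ}$, so $\gamma_C^\circ = \gauge_{C^\circ}$, which is the first statement of (ii). The reformulation in terms of sublevel sets follows immediately from Lemma~\ref{lem:convex-equivalence}(ii) applied to $C^\circ$.

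\medskip

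Finally, (i) falls out as a corollary. Since $C^\circ$ is a closed convex set containing the origin, $\gauge_{C^\circ}$ is a gauge by definition; but by (ii) it equals $\gamma_C^\circ$, so $\gamma_C^\circ$ is a gauge function. Applying (ii) twice (once with $C$ and once with $C^\circ$ in place of $C$) together with the bipolar theorem yields
\begin{equation*}
  \gamma_C^{\circ\circ} = (\gamma_{C^\circ})^\circ = \gamma_{C^{\circ\circ}} = \gamma_C,
\end{equation*}
which concludes the proof. The main (minor) obstacle throughout is the bookkeeping around unbounded $C$ and points in $\Ker \gauge_C$; I will handle this uniformly by allowing the extended-real value $+\infty$ in the supremum/infimum formulas and checking compatibility with positive homogeneity, so that the three identities remain valid without boundedness assumptions on $C$.
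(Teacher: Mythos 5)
Your argument is correct, but it is worth noting that the paper does not actually prove this lemma: its ``proof'' consists of citations to standard references (Rockafellar, Theorem~15.1 and Corollaries~15.1.1--15.1.2, or the corresponding results in Hiriart-Urruty and Lemar\'echal). You instead give a self-contained derivation, and the route you choose is sound: establishing $\gauge_C = \sigma_{C^\circ}$ first via the bipolar theorem and the identification $C = \enscond{x}{\gamma_C(x)\leq 1}$ of Lemma~\ref{lem:convex-equivalence}(ii), then reading off (ii) from the variational form $\gamma_C^\circ(u) = \sup_{\gamma_C(x)\leq 1}\dotp{x}{u} = \sigma_C(u)$, and finally obtaining (i) by applying (ii) twice together with $C^{\circ\circ}=C$. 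This ordering (iii)$\to$(ii)$\to$(i) is efficient because it makes the bipolar theorem do all the work once, at the level of sets, and reduces everything else to bookkeeping. Your handling of the degenerate cases is also essentially right: for $\gamma_C(x)=0$ the recession argument ($tx\in C$ for all $t>0$ forces $\dotp{u}{x}\leq 0$ on $C^\circ$) closes the gap, and the equality of the sublevel sets $\enscond{\mu>0}{\gamma_C(x)\leq\mu}=\enscond{\mu>0}{\sigma_{C^\circ}(x)\leq\mu}$ already yields equality of the two nonnegative extended-real values by taking infima. One small point to watch when you write this out in full: in the step $\gamma_C^\circ(u)=\sup_{\gamma_C(x)\leq 1}\dotp{x}{u}$, the reverse inequality requires checking $x$ with $\gamma_C(x)=0$ or $\gamma_C(x)=+\infty$ separately (the former via the recession argument you already sketched, the latter being vacuous for $\mu>0$); you flag this but should make the verification explicit. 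Also note that the paper's Definition~\ref{defn:polar-gauge} contains a typo ($\mu\gamma_C^\circ(x)$ where $\mu\gamma_C(x)$ is meant); you correctly work with the intended definition.
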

\begin{proof}
  (i) follows from \cite[Theorem~15.1]{rockafellar1996convex}. (ii) \cite[Corollary~15.1.1]{rockafellar1996convex} or \cite[Proposition~V.3.2.4]{hiriart1996convex}. (iii) \cite[Corollary~15.1.2]{rockafellar1996convex} or \cite[Proposition~V.3.2.5]{hiriart1996convex}.
\end{proof}

\subsubsection{Gauge and polar calculus}

We here derive the expression of the gauge function of the Minkowski sum of two sets, as well as that of the image of a set by a linear operator. These results play an important role in Section~\ref{sec:stable}.
\begin{lem}\label{lem:gaugesum}
Let $C_1$ and $C_2$ be nonempty closed convex sets containing the origin. Then
\[
\gamma_{C_1+C_2}(x) = \sup_{\rho \in [0,1]} \rho\gamma_{C_1} \infc (1-\rho)\gamma_{C_2}(x) ~.
\]
If $x$ is such that $\gamma_{C_1}(x_1) + \gamma_{C_2}(x_2)$ is continuous and finite on $\enscond{(x_1,x_2)}{x_1+x_2=x}$, then
\[
\gamma_{C_1+C_2}(x) = \inf_{z \in \RR^N} \max(\gamma_{C_1}(z),\gamma_{C_2}(x-z)) ~.
\]
\end{lem}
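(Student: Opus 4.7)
The plan is to rely on the gauge--support polarity $\gamma_C = \sigma_{C^\circ}$ (Lemma~\ref{lem:convex-polar-gauge}(iii)) combined with the standard conjugate calculus for sums and infimal convolutions. The first identity will follow from an explicit description of the polar $(C_1+C_2)^\circ$ in terms of $C_1^\circ$ and $C_2^\circ$, and the second from a minimax swap applied to the first.

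For the first identity, I would start from $\gamma_{C_1+C_2}=\sigma_{(C_1+C_2)^\circ}$ and combine the classical fact $\sigma_{C_1+C_2}=\sigma_{C_1}+\sigma_{C_2}$ with $\sigma_{C_i}=\gamma_{C_i^\circ}$ to describe
\[
(C_1+C_2)^\circ=\{u:\gamma_{C_1^\circ}(u)+\gamma_{C_2^\circ}(u)\le 1\}=\bigcup_{\rho\in[0,1]}(\rho C_1^\circ)\cap((1-\rho)C_2^\circ),
\]
the second equality coming from the choice $\rho=\gamma_{C_1^\circ}(u)$ whenever the sum is $\le 1$. Taking support functions and exchanging the supremum with the union then gives $\gamma_{C_1+C_2}(x)=\sup_{\rho\in[0,1]}\sigma_{\rho C_1^\circ\cap(1-\rho)C_2^\circ}(x)$. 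To identify the inner support function with an infimal convolution, I would use $\iota_{\rho C_1^\circ\cap(1-\rho)C_2^\circ}=\iota_{\rho C_1^\circ}+\iota_{(1-\rho)C_2^\circ}$ and $(\iota_{\rho C_i^\circ})^*=\sigma_{\rho C_i^\circ}=\rho\gamma_{C_i}$, then apply the classical identity $(f+g)^*=f^*\infc g^*$. The required constraint qualification is trivial since $0$ belongs to both $\rho C_1^\circ$ and $(1-\rho)C_2^\circ$, and this delivers the first formula.

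For the second identity, under the continuity/finiteness hypothesis on $(x_1,x_2)\mapsto \gamma_{C_1}(x_1)+\gamma_{C_2}(x_2)$ on the affine slice $\{x_1+x_2=x\}$, I would swap the $\sup_\rho$ and $\inf_{x_1+x_2=x}$ in the first identity via Sion's minimax theorem. The integrand $F(\rho,(x_1,x_2))=\rho\gamma_{C_1}(x_1)+(1-\rho)\gamma_{C_2}(x_2)$ is affine (hence concave and continuous) in $\rho\in[0,1]$ and convex lsc in $(x_1,x_2)$, and the continuity/finiteness hypothesis supplies the regularity required on the $(x_1,x_2)$-side. Sion's theorem then yields
\[
\sup_{\rho\in[0,1]}\inf_{x_1+x_2=x}F=\inf_{x_1+x_2=x}\sup_{\rho\in[0,1]}F=\inf_{x_1+x_2=x}\max(\gamma_{C_1}(x_1),\gamma_{C_2}(x_2)),
\]
using that the sup over $\rho\in[0,1]$ of $\rho\mapsto \rho a+(1-\rho)b$ equals $\max(a,b)$.

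The hard part will be the second identity: the first is an essentially mechanical consequence of conjugate calculus once the right polar decomposition is identified, whereas verifying Sion's hypotheses on the \emph{unbounded} affine slice $\{x_1+x_2=x\}$ demands care, and is precisely where the continuity/finiteness hypothesis is used. An alternative route for the second identity, bypassing minimax altogether, would be direct: write $\gamma_{C_1+C_2}(x)=\inf\{\lambda>0:x=x_1+x_2,\;x_i\in\lambda C_i\}$, and use the fact that for closed convex $C_i\ni 0$ and $\lambda>0$ one has $x_i\in\lambda C_i\Leftrightarrow \gamma_{C_i}(x_i)\le\lambda$; the proof in the paper likely follows the minimax route for symmetry with the first formula.
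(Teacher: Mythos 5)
Your proposal is correct and follows essentially the same route as the paper: the first identity via the polar decomposition $(C_1+C_2)^\circ=\enscond{u}{\sigma_{C_1}(u)+\sigma_{C_2}(u)\leq 1}$, splitting the constraint over $\rho\in[0,1]$, and identifying the support function of the intersection with an infimal convolution; the second identity by swapping the $\sup_\rho$ and the $\inf$ over the slice $\enscond{(x_1,x_2)}{x_1+x_2=x}$. The only cosmetic difference is that you invoke Sion's minimax theorem where the paper cites Rockafellar's Corollary~37.3.2, and both use the continuity/finiteness hypothesis in exactly the same way.
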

\begin{proof}
We have from Lemma~\ref{lem:convex-polar-gauge} and calculus rules on support functions, 
\[
\gamma_{(C_1+C_2)^\circ} = \sigma_{C_1+C_2} = \sigma_{C_1}+\sigma_{C_2} ~.
\]
Thus
\[
(C_1+C_2)^\circ = \{u: \sigma_{C_1}(u)+\sigma_{C_2}(u) \leq 1\} ~.
\]
This yields that
\begin{align*}
\gamma_{C_1 + C_2}(x)	&= \sigma_{(C_1 + C_2)^\circ}(x) \\
					&= \sigma_{\sigma_{C_1}(u)+\sigma_{C_2}(u) \leq 1}(x) \\
					&= \sup_{\sigma_{C_1}(u)+\sigma_{C_2}(u) \leq 1} \dotp{u}{x} \\
					&= \sup_{\rho \in [0,1]}\sup_{\sigma_{C_1}(u) \leq \rho,\sigma_{C_2}(u) \leq 1-\rho} \dotp{u}{x} \\
					&= \sup_{\rho \in [0,1]} \sigma_{\sigma_{C_1}(u) \leq \rho} \infc \sigma_{\sigma_{C_2}(u) \leq 1-\rho}(x) \\
					&= \sup_{\rho \in [0,1]} \rho \sigma_{\sigma_{C_1}(u) \leq 1} \infc (1-\rho)\sigma_{\sigma_{C_2}(u) \leq 1}(x) \\
					&= \sup_{\rho \in [0,1]} \rho \sigma_{C_1^\circ} \infc (1-\rho)\sigma_{C_2^\circ}(x) \\
					&= \sup_{\rho \in [0,1]} \sigma_{\rho C_1^\circ} \infc \sigma_{(1-\rho)C_2^\circ}(x) \\
					&= \sup_{\rho \in [0,1]} \rho \gamma_{C_1} \infc (1-\rho)\gamma_{C_2}(x) ~,
\end{align*}
which is the first assertion.

The last identity can be rewritten
\begin{align*}
\gamma_{C_1 + C_2}(x)	&= \sup_{\rho \in [0,1]}\inf_{x_1+x_2=x} \rho \gamma_{C_1}(x_1) + (1-\rho)\gamma_{C_2}(x_2) ~.
\end{align*}
Under the assumptions of the lemma, the objective in the $\sup\inf$ is a continuous finite concave-convex function on~$[0,1] \times \enscond{(x_1,x_2)}{x_1+x_2=x}$. Since the latter sets are non-empty, closed and convex, and $[0,1]$ is obviously bounded, we have from using \cite[Corollary~37.3.2]{rockafellar1996convex}
\begin{align*}
\gamma_{C_1+C_2}(x) 	&= \inf_{z \in \RR^N}\sup_{\rho \in [0,1]} \rho\gamma_{C_1}(z) + (1-\rho)\gamma_{C_2}(x-z) \\
			&= \inf_{z \in \RR^N} \max(\gamma_{C_1}(z),\gamma_{C_2}(x-z)) ~.
\end{align*}
\end{proof}

\begin{lem}\label{lem:gaugelin}
Let $C$ be a closed convex set such that $0 \in \ri C$, and $D$ a linear operator. Then, for every $x \in \Im(D)$
\[
\gamma_{D(C)}(x) = \inf_{z \in \Ker(D)} \gamma_C(D^+ x + z) ~.
\]
\end{lem}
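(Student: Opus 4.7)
The plan is to prove this identity by unravelling the definition of the gauge directly, leveraging the fibre description of the preimage of a point under a linear map.

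First I would write the gauge as $\gamma_{D(C)}(x) = \inf\{\lambda > 0 : x \in \lambda D(C)\} = \inf\{\lambda > 0 : x/\lambda \in D(C)\}$. Since $x \in \Im(D)$, we have $D D^+ x = x$, so the fibre of $x/\lambda$ under $D$ is the affine subspace
\[
D^{-1}(\{x/\lambda\}) = D^+(x/\lambda) + \Ker(D).
\]
Therefore $x/\lambda \in D(C)$ if and only if this fibre meets $C$, i.e.\ if and only if there exists $z \in \Ker(D)$ with $D^+(x/\lambda) + z \in C$.

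Next I would rescale. Multiplying by $\lambda > 0$ and using the fact that $\Ker(D)$ is a linear subspace (so $\lambda z$ ranges over $\Ker(D)$ when $z$ does, for any fixed $\lambda>0$), the condition becomes: there exists $z' \in \Ker(D)$ such that $D^+ x + z' \in \lambda C$. Hence
\[
\{\lambda > 0 : x \in \lambda D(C)\}
= \bigcup_{z \in \Ker(D)} \{\lambda > 0 : D^+ x + z \in \lambda C\}.
\]
Taking the infimum of both sides and exchanging the infimum with the union yields
\[
\gamma_{D(C)}(x)
= \inf_{z \in \Ker(D)} \inf\{\lambda > 0 : D^+ x + z \in \lambda C\}
= \inf_{z \in \Ker(D)} \gamma_C(D^+ x + z),
\]
where the last equality is just the definition of $\gamma_C$ evaluated at $D^+ x + z$ (with the usual convention $\gamma_C(\cdot)=+\infty$ when the set of admissible $\lambda$'s is empty).

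There is essentially no obstacle beyond bookkeeping. The only mild subtlety is to check that the substitution $z \mapsto \lambda z$ is a bijection of $\Ker(D)$ (which holds because $\lambda>0$ and $\Ker(D)$ is a subspace) and that degenerate cases are harmless: if $x=0$, choosing $z=0$ gives $\gamma_C(0)=0$ and also $\gamma_{D(C)}(0)=0$, since $0 \in C$ implies $0 \in D(C)$. The hypothesis $0 \in \ri C$, via Lemma~\ref{lem:convex-equivalence}(v), guarantees that $\gamma_C$ is finite and coercive on $\Lin C$, which ensures that the inner infimum is well-posed (and attained) whenever $D^+ x + \Ker(D)$ meets $\Lin C$, so the right-hand side is not vacuously $+\infty$ when $x \in \Im(D) \cap \Im(D\,|_{\Lin C})$.
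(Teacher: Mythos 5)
Your proof is correct, but it takes a genuinely different route from the paper's. The paper argues by polarity: it first identifies $\gamma_{(D(C))^\circ}=\sigma_{D(C)}=\sigma_C\circ D^*$, and then applies the conjugate-of-a-composition rule (\cite[Theorem~X.2.2.3]{hiriart1996convex}), which requires the qualification condition $\Im(D^*)\cap\ri(C^\circ)\neq\emptyset$ — this is where the hypothesis $0\in\ri C$ enters — to convert $\sigma_{(D(C))^\circ}$ into an infimal image over the fibre $D^{-1}(x)=D^+x+\Ker(D)$. You instead work directly with the Minkowski-functional definition: $x/\lambda\in D(C)$ iff the fibre $D^+(x/\lambda)+\Ker(D)$ meets $C$, the rescaling $z\mapsto\lambda z$ is a bijection of $\Ker(D)$ for $\lambda>0$, and $\inf$ of a union is the iterated $\inf$. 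Your argument is more elementary and, notably, does not actually need $0\in\ri C$ for the identity itself (only $0\in C$), nor does it need $D(C)$ to be closed — a point the paper glosses over when it asserts that $D(C)$ is ``immediately'' closed, which is not automatic for images of closed convex sets under linear maps (it is what lets one identify $\gamma_{D(C)}$ with $\sigma_{(D(C))^\circ}$). What the duality route buys in exchange is that the qualification condition delivers exactness of the infimal image, i.e.\ attainment of the infimum, essentially for free; in your version attainment is a separate observation, which you correctly tie to the coercivity of $\gamma_C$ on $\Lin C$ via Lemma~\ref{lem:convex-equivalence}(v). Both proofs establish the stated identity.
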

\begin{proof}
It is immediate to see that $D(C)$ is a closed convex set containing the origin. Moreover, we have $\Im(D^*)\cap\mathrm{dom}(\sigma_C) \neq \emptyset$, since the origin is in both of them. Thus, using \cite[Theorem~X.2.1.1]{hiriart1996convex} and Lemma~\ref{lem:convex-polar-gauge}, we have
\[
\gamma_{\pa{D(C)}^\circ} = \sigma_{D(C)} = \pa{\iota_{D(C)}}^* = \sigma_C \circ D^* ~.
\]
Now, as by assumption $0 \in \ri C$, we have $0 \in \mathrm{ri}(C^\circ)$, and therefore $\Im(D^*) \cap \mathrm{ri}(C^\circ) \neq \emptyset$. By virtue of \cite[Theorem~X.2.2.3]{hiriart1996convex} and Lemma~\ref{lem:convex-polar-gauge}, we get
\begin{align*}
\gamma_{D(C)}(x)	&= \sigma_{\pa{D(C)}^\circ}(x)\\
				&= \sigma_{\sigma_C \circ D^*(u) \leq 1}(x) \\
				&= \pa{\iota_{\sigma_C(w) \leq 1} \circ D^*}^*(x) \\
				&= \inf_{v} \sigma_{\sigma_C(w) \leq 1}(v) \quad \mathrm{s.t.} \quad D v = x \\
				&= \inf_{z \in \Ker(D)} \sigma_{\sigma_C(w) \leq 1}(D^+ x + z) \\
				&= \inf_{z \in \Ker(D)} \sigma_{\sigma_C(w) \leq 1}(D^+ x + z) \\
				&= \inf_{z \in \Ker(D)} \gamma_{C}(D^+ x + z) ~.
\end{align*}
\end{proof}
Using Lemma~\ref{lem:convex-equivalence}(v), one can observe that the infimum is finite if $(D^+ x + \Ker(D)) \cap \Lin C \neq \emptyset$.

\subsubsection{Subdifferential of a gauge}

The subdifferential of a gauge $\gamma_C$ at a point $x$ is completely characterized by the face of its polar set $C^\circ$ exposed by $x$. Put formally, we have~\cite{hiriart1996convex}
\begin{equation*}
  \partial \gauge_C(x) = \mathbf{F}_{C^\circ}(x) = 
  \enscond{\eta \in \RR^N}{\eta \in C^\circ \qandq \dotp{\eta}{x} = \gauge_C(x)} ,
\end{equation*}
where $\mathbf{F}_{C^\circ}(x)$ is the face of $C^\circ$ exposed by $x$. The latter is the intersection of $C^\circ$ and the supporting hyperplane $\enscond{\eta \in \RR^N}{\dotp{\eta}{x} = \gauge_C(x)}$.
The special case of $x = 0$ has a much simpler structure; it is the polar set $C^\circ$ from Lemma~\ref{lem:convex-polar-gauge}(ii)-(iii), i.e.
\begin{equation*}
  \partial \gauge_C(x) =
  \enscond{\eta \in \RR^N}{\gauge_{C^\circ}(\eta) \leq 1} = C^\circ .
\end{equation*}
The following proposition gives an equivalent convenient description of the subdifferential of the regularizer $\J=\gamma_C$ at $x$ in terms of a particular supporting hyperplane to $C^\circ$: the affine hull $\bar \S_x$.
\begin{prop}\label{prop:aff-pol}
  Let $\J = \gauge_C$ be a finite-valued gauge.
  Then for $x \in \RR^N$, one has
  \begin{equation*}
    \partial \J(x) = \bar \S_x \cap C^\circ .
  \end{equation*}
\end{prop}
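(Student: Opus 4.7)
The plan is to use the exposed-face characterization of the subdifferential stated immediately before the proposition, namely
\[
\partial \J(x) = C^\circ \cap H_x \qwhereq H_x = \enscond{\eta \in \RR^N}{\dotp{\eta}{x} = \gauge_C(x)},
\]
and then to identify the role of the affine hull $\bar \S_x$ as being contained in the supporting hyperplane $H_x$. Once this is done, the proposition reduces to an elementary set-theoretic manipulation.

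First I would establish the easy inclusion $\partial \J(x) \subseteq \bar \S_x \cap C^\circ$. The inclusion $\partial \J(x) \subseteq \bar \S_x$ is immediate from the definition $\bar \S_x = \Aff \partial \J(x)$, and $\partial \J(x) \subseteq C^\circ$ is built into the exposed-face description recalled above. For the reverse inclusion, the key step is to show that $\bar \S_x \subseteq H_x$. Since $\J$ is finite-valued and convex, $\partial \J(x)$ is nonempty, and every element $\eta \in \partial \J(x)$ satisfies $\dotp{\eta}{x} = \gauge_C(x)$. An arbitrary element of $\bar \S_x$ is an affine combination $\eta = \sum_{i=1}^k \rho_i \eta_i$ with $\eta_i \in \partial \J(x)$ and $\sum_i \rho_i = 1$; then
\[
\dotp{\eta}{x} = \sum_{i=1}^k \rho_i \dotp{\eta_i}{x} = \gauge_C(x) \sum_{i=1}^k \rho_i = \gauge_C(x),
\]
so $\eta \in H_x$. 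Hence $\bar \S_x \cap C^\circ \subseteq H_x \cap C^\circ = \partial \J(x)$.

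Finally I would briefly address the trivial case $x = 0$, where $\gauge_C(0) = 0$ and $H_0 = \RR^N$, so $\partial \J(0) = C^\circ$ and $\bar \S_0 = \Aff C^\circ \supseteq C^\circ$, hence $\bar \S_0 \cap C^\circ = C^\circ = \partial \J(0)$. There is no real obstacle here: the only substantive point is the containment $\bar \S_x \subseteq H_x$, which is really just the observation that an affine hull of points lying on a common affine hyperplane lies inside that hyperplane. No convex-analytic machinery beyond the exposed-face identity is needed.
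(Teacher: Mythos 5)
Your proof is correct and follows essentially the same route as the paper's: both start from the exposed-face identity $\partial \J(x) = H_x \cap C^\circ$ and reduce the claim to the containment $\bar \S_x = \Aff \partial \J(x) \subseteq H_x$, which holds because the affine hull of a set lying in an affine hyperplane stays in that hyperplane. Your explicit affine-combination computation and the separate check at $x=0$ are fine (the latter is in fact already covered by the general argument since $H_0 = \RR^N$); nothing further is needed.
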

\begin{proof}
  Let $x \in \RR^N$. We have
  \begin{equation*}
    \partial \J(x) =
    \mathbf{F}_{C^\circ}(x) =
    H \cap C^\circ ,
  \end{equation*}
  where $H = \enscond{\eta \in \RR^N}{\dotp{\eta}{x} = \J(x)}$ is the supporting hyperplane of $C^\circ$ at $x$.
  By Proposition~\ref{prop:convex-basics-decompos}(i), we have
  \begin{equation*}
    \bar \S_x = \Aff \partial \J(x) \subseteq H,
  \end{equation*}
  which implies that
  \begin{equation*}
    \bar \S_x \cap C^\circ \subseteq H \cap C^\circ .
  \end{equation*}
  The converse inclusion is true since $\partial \J(x) = H \cap C^\circ \subseteq \bar \S_x$.
\end{proof}

\begin{prop}
\label{prop:convex-basics-decompos-gauge}
  Let $\J = \gauge_C$ be a finite-valued gauge. For any $x \in \RR^N$, one has
  \begin{enumerate}[(i)]
  \item For every $u \in \bar \S_x$, $J(x) = \dotp{u}{x}$.
  \item $x \in \T_x$.
  \item The subdifferential gauge $\antigx$ reads
  \[
  \antigx(\eta) = \inf_{\tau \geq 0} \max( \J^\circ(\tau f_x + \eta) , \tau) + \iota_{\S_x}(\eta) ~.
  \]
  \item The polar of the subdifferential gauge $\antigPx$ reads
    \begin{equation*}
      \antigPx(d)=\J(d_{\S_x}) - \dotp{f_{\S_x}}{d_{\S_x}} .
    \end{equation*}
  \end{enumerate}
\end{prop}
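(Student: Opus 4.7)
Parts (i) and (ii) rest on the classical description of the subdifferential of a gauge $J=\gamma_C$ as the face of $C^\circ$ exposed by $x$, namely $\partial J(x)=\{u\in C^\circ : \dotp{u}{x}=J(x)\}$. Thus every $u\in\partial J(x)$ satisfies $\dotp{u}{x}=J(x)$. For (i), any $w\in\bar\S_x=\Aff\partial J(x)$ is an affine combination $w=\sum_i\rho_i u_i$ of subgradients with $\sum_i\rho_i=1$; linearity of the inner product propagates the constant value $J(x)$: $\dotp{w}{x}=\sum_i\rho_i\dotp{u_i}{x}=J(x)$. Part (ii) is an immediate corollary: for any $u,v\in\bar\S_x$, $\dotp{u-v}{x}=0$, and since $\S_x=\bar\S_x-\bar\S_x$, the vector $x$ is orthogonal to every element of $\S_x$, hence $x\in\S_x^\perp=\T_x$.

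For (iii), the starting point is Proposition~\ref{prop:aff-pol}, which gives $\partial J(x)=\bar\S_x\cap C^\circ$. Since $f_x\in\ri\partial J(x)\subset\bar\S_x$, translating by $-f_x$ yields $\partial J(x)-f_x=\S_x\cap(C^\circ-f_x)$. I then use the elementary identity $\gamma_{A\cap B}=\max(\gamma_A,\gamma_B)$ for closed convex sets $A,B$ containing the origin (a direct consequence of $\lambda(A\cap B)=\lambda A\cap\lambda B$); since $\S_x$ is a subspace, $\gamma_{\S_x}=\iota_{\S_x}$, which supplies the $\iota_{\S_x}$ summand. The remaining factor is unfolded by positive homogeneity of $J^\circ=\gamma_{C^\circ}$: $\eta/\lambda+f_x\in C^\circ$ is equivalent to $J^\circ(\eta+\lambda f_x)\leq\lambda$, so $\gamma_{C^\circ-f_x}(\eta)=\inf\{\lambda>0: J^\circ(\eta+\lambda f_x)\leq\lambda\}$. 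Rewriting this constraint epigraphically as $\max(J^\circ(\tau f_x+\eta),\tau)\leq\tau$ produces the stated $\inf\max$ form.

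For (iv), I pass to the polar through Lemma~\ref{lem:convex-polar-gauge}(iii), which gives $\antigPx=\sigma_{\partial J(x)-f_x}$ and hence
\[
\antigPx(d)=\sigma_{\partial J(x)}(d)-\dotp{f_x}{d}=J'(x;d)-\dotp{f_x}{d},
\]
using the standard identification of $\sigma_{\partial J(x)}$ with the directional derivative $J'(x;\cdot)$. Since $\partial J(x)-f_x\subset\S_x$, the left-hand side depends only on $d_S$, so $\antigPx(d)=\antigPx(d_S)$. Decomposing $f_x=e_x+f_S$ via Proposition~\ref{prop:convex-basics-decompos} (so that $f_T=e_x$ and $\dotp{f_x}{d}=\dotp{e_x}{d_T}+\dotp{f_S}{d_S}$) and using $\dotp{e_x}{d_S}=0$, the desired identity $\antigPx(d)=J(d_S)-\dotp{f_S}{d_S}$ reduces to the one-step claim $J'(x;d_S)=J(d_S)$ on $\S_x$.

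\textbf{Main obstacle.} The computations in (i)--(iii) are essentially bookkeeping on gauges, affine hulls, and polar duality. The delicate point lies in (iv): the identification $J'(x;d_S)=J(d_S)$ for $d_S\in\S_x$. The inequality $J'(x;d_S)\leq J(d_S)$ is immediate from subadditivity of the gauge $J$. The reverse inequality $J'(x;d_S)\geq J(d_S)$ is the nontrivial step: it asserts that the supremum of $\dotp{\cdot}{d_S}$ over $C^\circ$ is attained within the face $\partial J(x)=\bar\S_x\cap C^\circ$ for $\S_x$-directions. I expect this to follow by exploiting that $\bar\S_x=e_x+\S_x$ is an affine slice of $C^\circ$ supporting $\partial J(x)$, so that shifting any maximizer over $C^\circ$ by an appropriate translation in $\T_x$ produces a point of $\partial J(x)$ with the same value of $\dotp{\cdot}{d_S}$ since $d_S\perp\T_x$.
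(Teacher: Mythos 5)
Parts (i) and (ii) of your argument are correct and essentially the paper's own proof (the paper gets $\dotp{u}{x}=\J(x)$ for subgradients from the Fenchel identity rather than the exposed-face description, and proves (ii) by comparing $e_x$ and $e_x+v$, but these are cosmetic differences).

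The genuine gap is in the last step of (iii), and a second one appears in (iv). For (iii), everything up to $\gauge_{C^\circ-f_x}(\eta)=\inf\enscond{\lambda>0}{\J^\circ(\eta+\lambda f_x)\le\lambda}$ is correct, but the final ``epigraphic rewriting'' silently replaces $\inf\enscond{\tau}{h(\tau)\le\tau}$ by $\inf_{\tau\ge 0}h(\tau)$ with $h(\tau)=\max(\J^\circ(\eta+\tau f_x),\tau)$; these are different quantities, and they actually differ here. Take $\J=\normi{\cdot}$ on $\RR^3$, $x=(1,1,1)$, so $f_x=e_x=(1/3,1/3,1/3)$, and $\eta=(1,-1,0)\in\S_x$: your constrained infimum equals $3$, which is the correct value of $\antigx(\eta)$ (it matches the paper's own $\linf$ computation $\max_i(-3s_i\eta_i)_+=3$), whereas $\inf_{\tau\ge0}\max(\normu{\eta+\tau f_x},\tau)\le\max(\normu{\eta},0)=2$. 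So this is not a rewriting but a substantive (and here false) claim; your intermediate formula is the correct description of $\gauge_{C^\circ-f_x}$, and no argument will close the gap to the displayed $\inf\max$ form. (The paper reaches that form by a support-function/minimax computation on the Minkowski sum $C^\circ+\ens{-f_x}$, transposing Lemma~\ref{lem:gaugesum}; that route stumbles at the same place, because $\sigma_{\ens{-f_x}}$ is not non-negative and the splitting over $\rho\in[0,1]$ no longer covers the constraint set.)

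For (iv) the same phenomenon occurs one step later. Your identity $\antigPx(d)=\sigma_{\partial\J(x)-f_x}(d)=\J'(x;d)-\dotp{f_x}{d}$, depending only on $d_{\S_x}$, is correct, and it is precisely the form the paper actually uses downstream (Lemma~\ref{lem:directional-gauge}). But the residual claim $\J'(x;d_S)=\J(d_S)$ for $d_S\in\S_x$ is false in general: in the same $\linf$ example, $d_S=(1,1,-2)\in\S_x$ gives $\J'(x;d_S)=\sigma_{\partial\J(x)}(d_S)=\max_i(d_S)_i=1$ while $\J(d_S)=\normi{d_S}=2$. Your sketched repair --- translating a maximizer of $\dotp{\cdot}{d_S}$ over $C^\circ$ by a vector of $\T_x$ into $\bar\S_x$ --- does not work because the translate need not remain in $C^\circ$; the supremum over $C^\circ$ can genuinely exceed the supremum over the face $\partial\J(x)$ even in $\S_x$-directions. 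The identity you need does hold for strong gauges (where separability makes the face ``full'' in the $\S_x$-directions, as for $\lun$ and $\lun$--$\ldeux$), which is why it is harmless in those cases but fails for $\linf$. In short: (i)--(ii) are proved; for (iii)--(iv) the identities you derive along the way are the correct ones, and the final bridging steps are gaps that cannot be filled as stated.
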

\begin{proof}
  \begin{enumerate}[(i)]
  \item Each element of $\bar \S_x$ can be written as $u = \sum_{i=1}^k \rho_i \eta_i$, for $k > 0$, where $\eta_i \in \partial \J(x)$ and $\sum_{i=1}^k \rho_i = 1$.
    By Fenchel identity applied to the gauge $\J$, and using Lemma~\ref{lem:convex-polar-gauge}(iii), we have
    \begin{equation*}
      \dotp{x}{\eta_i} = \J(x) + \iota_{C^\circ}(\eta_i) , \quad \forall i ~.
    \end{equation*}
    Since $\eta_i \in \partial \J(x) \subseteq C^\circ$, we get
    \begin{equation*}
      \dotp{x}{\eta_i} = \J(x), \quad \forall i ~,
    \end{equation*}
    Multiplying by $\rho_i$ and summing this identity over $i$ and using the fact that $\sum_{i=1}^k \rho_i = 1$ we obtain the desired result.

  \item For any $v \in \S_x$, we have $v+e_x \in \bar \S_x$ since $e_x \in \bar \S_x$. Thus applying (i), we get $\dotp{x}{e_x + v} = \J(x)$ and $\dotp{x}{e_x}=\J(x)$. Combining both identities implies that $\dotp{x}{v}=0$, $\forall v \in \S_x$, or equivalently that $x \in \S_x^\perp = \T_x$.

  \item Since $f_x \in \ri \partial J(x) \subset \bar\S_x$, Proposition~\ref{prop:convex-basics-decompos} implies that $f_x = \proj_{\S_x}(f_x) + \proj_{\T_x}(f_x) = \proj_{\S_x}(f_x) + e_x$. Hence, using Proposition~\ref{prop:aff-pol}, we get
    \begin{align*}
      \partial \J(x) - f_x 	&= (C^\circ - f_x) \cap (\bar \S_x - f_x) \\
    				&= (C^\circ - f_x) \cap (\S_x - \ens{\proj_{\S_x}(f_x)}) \\
				&= (C^\circ - f_x) \cap \S_x ~.
    \end{align*}
    We therefore obtain
    \begin{align*}
      \antigPx(\eta) &= \gauge_{(C^\circ - f_x) \cap \S_x}(\eta) \\
                     &= \max(\gamma_{C^\circ - f_x}(\eta),\gamma_{\S_x}(\eta)) \\
                     &= \max(\gamma_{C^\circ - f_x}(\eta),\iota_{\S_x}(\eta)) \\
                     &= \gamma_{C^\circ - f_x}(\eta) + \iota_{\S_x}(\eta) ~.
    \end{align*}  
    At this stage, Lemma~\ref{lem:gaugesum} does not apply straightforwardly since $0 \in C^\circ$ but $f_x \neq 0$ in general. However, proceeding as in the proof of that lemma, we arrive at
    \begin{align*}
      \gamma_{C^\circ + \ens{-f_x}}(\eta) 	&= \sup_{\rho \in [0,1]} \rho \J^\circ \infc (1-\rho)\sigma_{\ens{-f_x}^\circ}(\eta)
    \end{align*}
    where, from Definition~\ref{defn:convex-polarset}, $\ens{-f_x}^\circ=\enscond{\eta}{\dotp{\eta}{f_x} \geq -1}$, which indeed contains the origin as an interior point. Continuing from the last equality, we get
    \begin{align*}
      \gamma_{C^\circ + \ens{-f_x}}(\eta) 	&= \sup_{\rho \in [0,1]} \rho \J^\circ \infc (1-\rho)\gamma_{\ens{-f_x}^{\circ\circ}}(\eta) \\
                                                &= \sup_{\rho \in [0,1]} \rho \J^\circ \infc (1-\rho)\gamma_{\co{\ens{-f_x} \cup \ens{0}}}(\eta) \\
                                                &= \sup_{\rho \in [0,1]} \rho \J^\circ \infc (1-\rho)\gamma_{\enscond{-\mu f_x}{\mu \in [0,1]}}(\eta) ~.
    \end{align*}
    It is easy to see that
    \[
    \gamma_{\enscond{-\mu f_x}{\mu \in [0,1]}}(-\eta) = 
    \begin{cases}
      \tau & \text{if } \eta \in \tau f_x, \tau \in \RR_+ ~, \\
      +\infty & \text{otherwise} ~.
    \end{cases}
    \]
    Thus
    \begin{align*}
      \gamma_{C^\circ + \ens{-f_x}}(\eta) = \sup_{\rho \in [0,1]}\inf_{\tau \geq 0} \rho \J^\circ(\tau f_x + \eta) + (1-\rho)\tau ~.
    \end{align*}
    Recalling that $\J^\circ$ is a finite-valued gauge, hence continuous, the objective in the $\sup\inf$ fulfills the assumption of the second assertion of Lemma~\ref{lem:gaugesum}, whence we get
    \begin{align*}
      \gamma_{C^\circ + \ens{-f_x}}(\eta) 	&= \inf_{\tau \geq 0} \max( \J^\circ(\tau f_x + \eta), \tau) ~.
    \end{align*}

  \item Using some calculus rules with support functions and assertion (ii), we have
    \begin{align*}
      \antigPx(d) = \antigPx(d_{\S_x}) 
      &= \sigma_{(C^\circ + \ens{-f_x}) \cap \S_x}(d_{\S_x}) \\
      &= \co{\inf(\sigma_{C^\circ + \ens{-f_x}}(d_{\S_x}),\sigma_{\S}(d_{\S_x}))} \\
      &= \co{\inf(\sigma_{C^\circ + \ens{-f_x}}(d_{\S_x}),\iota_{\T}(d_{\S_x}))} \\
      &= \sigma_{C^\circ + \ens{-f_x}}(d_{\S_x}) \\
      &= \sigma_{C^\circ}(d_{\S_x}) - \dotp{\proj_{\S_x}(f_x)}{d_{\S_x}} \\
      &= \J(d_{\S_x}) - \dotp{\proj_{\S_x}(f_x)}{d_{\S_x}} ~.
    \end{align*}
  \end{enumerate}
\end{proof}

We end by showing that if a set-valued mapping is Lipschitz, then its polar is also Lipschitz continuous.
\begin{lem}\label{lem:polar-lip}
Let $F : \RR^N \rightrightarrows \RR^N$ be a $\beta$-Lipschitz set-valued mapping, such that $F(x)$ is a compact convex set containing the origin as a relative interior point for every $x \in \RR^N$. Then $F^\circ$ defined as $x \mapsto F(x)^\circ$ is $\beta$-Lipschitz. Moreover, the mapping $x \mapsto \gamma_{F(x)}(u)$, for any $u \in \dom(\gamma_{F(x)})=\Lin(F(x))$, is $\beta\norm{u}$-Lipschitz.
\end{lem}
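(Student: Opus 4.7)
The proof rests on two classical tools of convex analysis. The first is H\"ormander's formula, which expresses the Hausdorff distance between two non-empty compact convex sets as the $\ell^\infty$-gap between their support functions, $d_H(C_1,C_2) = \sup_{\norm{u}\le 1} |\sigma_{C_1}(u)-\sigma_{C_2}(u)|$. The second is the polar duality $\sigma_C = \gamma_{C^\circ}$ provided by Lemma~\ref{lem:convex-polar-gauge}(iii), which is available here because every $F(x)$ contains the origin in its relative interior.

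For the first assertion, I would translate the $\beta$-Lipschitz property of $F$ (measured in the Hausdorff metric) into a pointwise Lipschitz estimate on its support function. H\"ormander's formula together with the positive homogeneity of support functions yields
\[
  |\sigma_{F(x)}(u)-\sigma_{F(y)}(u)| \le \beta\norm{x-y}\norm{u}, \quad \forall u \in \RR^N.
\]
Substituting $\sigma_{F(x)} = \gamma_{F(x)^\circ}$ gives $|\gamma_{F(x)^\circ}(u)-\gamma_{F(y)^\circ}(u)| \le \beta\norm{x-y}\norm{u}$, which, by the one-to-one correspondence between closed convex sets containing the origin and their gauges (Lemma~\ref{lem:convex-equivalence}(ii)), is exactly what it means for the set-valued mapping $x \mapsto F(x)^\circ$ to be $\beta$-Lipschitz.

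For the second assertion, I would invoke the reverse identity $\gamma_{F(x)}(u) = \sigma_{F(x)^\circ}(u)$ (still Lemma~\ref{lem:convex-polar-gauge}(iii)). Although $F(x)^\circ$ is in general unbounded, the assumption $0 \in \ri F(x)$ guarantees that, for $u \in \Lin F(x)$, the supremum defining the support function is attained at some point $v^\star$ of the compact restriction $F(x)^\circ \cap \Lin F(x)$, so $\gamma_{F(x)}(u) = \dotp{v^\star}{u}$. The Lipschitz bound on $\gamma_{F^\circ}$ obtained above then implies that the mild rescaling $v^\star/(1+\beta\norm{x-y}\norm{v^\star})$ belongs to $F(y)^\circ$; pairing this vector with $u$, rearranging, and swapping the roles of $x$ and $y$ delivers the claimed two-sided Lipschitz estimate on $x \mapsto \gamma_{F(x)}(u)$.

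\textbf{Main obstacle.} The principal subtlety is the unboundedness of the polars: since $F(x)^\circ \supseteq (\Lin F(x))^\bot$, the Hausdorff distance $d_H(F(x)^\circ, F(y)^\circ)$ is typically infinite, so H\"ormander's formula cannot be applied directly on the polar side. This is precisely why ``$F^\circ$ is $\beta$-Lipschitz'' must be read in the gauge sense, and why the second claim has to be carried out inside the linear hull $\Lin F(x)$---which is locally constant in the intended application to $\partial J$ on a partial-smoothness manifold---where the restricted polar is compact and has the origin in its relative interior.
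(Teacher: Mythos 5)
Your route is genuinely different from the paper's. The paper stays on the primal side: it turns the Hausdorff inclusions into an inclusion between polars, identifies $\gamma_{F(x)}=\sigma_{F(x)^\circ}$, and then bounds the gauge of the Minkowski sum $F(x')+\beta\norm{x-x'}\BB(0)$ via the inf-convolution formula of Lemma~\ref{lem:gaugesum}. You instead pass through H\"ormander's formula, which immediately yields $\abs{\sigma_{F(x)}(u)-\sigma_{F(y)}(u)}\leq\beta\norm{x-y}\norm{u}$, i.e. $\abs{\gamma_{F(x)^\circ}(u)-\gamma_{F(y)^\circ}(u)}\leq\beta\norm{x-y}\norm{u}$. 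For the first assertion this is clean and correct, and your reading of ``$F^\circ$ is Lipschitz'' in the gauge sense is indeed the only tenable one, since the Hausdorff distance between the polars is generically infinite.

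The gap is in the second assertion. The rescaling step itself is sound: if $v^\star\in F(x)^\circ\cap\Lin F(x)$ attains $\sigma_{F(x)^\circ}(u)=\gamma_{F(x)}(u)$, then $v^\star/(1+\beta\norm{x-y}\norm{v^\star})\in F(y)^\circ$, and pairing with $u$ gives the multiplicative estimate
\begin{equation*}
  \gamma_{F(x)}(u)\;\leq\;\pa{1+\beta\norm{x-y}\,\norm{v^\star}}\,\gamma_{F(y)}(u)~.
\end{equation*}
But ``rearranging'' this produces $\gamma_{F(x)}(u)-\gamma_{F(y)}(u)\leq\beta\norm{x-y}\,\norm{v^\star}\,\gamma_{F(y)}(u)$, and there is no reason for $\norm{v^\star}\gamma_{F(y)}(u)$ to be dominated by $\norm{u}$: both $\norm{v^\star}$ and $\gamma_{F(y)}(u)/\norm{u}$ scale like the reciprocal of the inradius of $F(\cdot)$ within $\Lin F(\cdot)$. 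The example $F(x)=g(x)\BB(0)$ with $g>0$ and $\beta$-Lipschitz is $\beta$-Lipschitz in Hausdorff distance, yet $\gamma_{F(x)}(u)=\norm{u}/g(x)$ has Lipschitz constant $\beta\norm{u}/\inf g^2$ in $x$; so the additive constant $\beta\norm{u}$ is not reachable by any argument, and the honest output of your proof is the relative bound displayed above. (For what it is worth, the paper's own proof of this step also slips: it evaluates $\gamma_{\beta\norm{x-x'}\BB(0)}(u)$ as $\beta\norm{x-x'}\norm{u}$, whereas the gauge of the ball of radius $r$ is $\norm{u}/r$. The downstream use in Theorem~\ref{thm:lip-prg} only requires a relative estimate of exactly the type you obtain, to be absorbed into the constant $\xi_x$, so your multiplicative bound is the one worth keeping, stated with its correct constant.)
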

\begin{proof}
  Using the Lipschitz continuity of $F$, we have
  \begin{equation*}
    F(x') \subseteq F(x) + \beta \norm{x' - x} \BB(0) ,
  \end{equation*}
  Using the symmetry of $\BB(0)$, we get that
  \begin{equation*}
    F(x') + \beta \norm{x' - x} \BB(0) \subseteq F(x) .
  \end{equation*}
  Since the polarity reverses the order of inclusion, we have
  \begin{equation*}
    (F(x') + \beta \norm{x' - x} \BB(0))^\circ \supseteq F(x)^\circ ,
  \end{equation*}
  or equivalently, by Lemma~\ref{lem:convex-polar-gauge},
  \begin{equation}\label{eq:lip-polar-1}
    \gauge_{F(x)}(u) = \sigma_{F(x)^\circ}(u) \leq \sigma_{(F(x') + \beta \norm{x' - x} \BB(0))^\circ}(u) = \gauge_{F(x') + \beta \norm{x' - x} \BB(0)}(u),
  \end{equation}
  Observe that these gauges are finite-valued and coercive for $u$ as prescribed (see Lemma~\ref{lem:convex-equivalence}(v)).
  According to Lemma~\ref{lem:gaugesum}, one has
  \begin{gather*}
    \gauge_{F(x') + \beta \norm{x' - x} \BB(0)}(u)
    = \inf_{z \in \RR^N} \max(\gamma_{F(x')}(u),\gamma_{\beta \norm{x' - x} \BB(0)}(u-z)) .
  \end{gather*}
  It then follows that
  \begin{align*}
    \gauge_{F(x') + \beta \norm{x' - x} \BB(0)}(u)
    &\leq \gamma_{F(x')}(u) + \gamma_{\beta \norm{x' - x} \BB(0)}(u) \\
    &=  \gamma_{F(x')}(u) + \beta \norm{x' - x} \norm{u} .
  \end{align*}
  Thus, combining this with~\eqref{eq:lip-polar-1}, we get
  \begin{equation*}
    \abs{\gamma_{F(x)}(u) - \gamma_{F(x')}(u)} \leq \beta \norm{x' - x} \norm{u} ~.
  \end{equation*}
  which concludes the proof.
\end{proof}




\section{Proofs of Section~\ref{sec:decomposable}}

\begin{proof}[Proof of Proposition~\ref{prop:convex-basics-decompos}]
{~}\\\vspace*{-0.5cm}
  \begin{enumerate}[(i)]
  \item This is due to the fact that $e_x$ is the orthogonal projection of $0$ on the affine space $\bar \S_x$. It is therefore an element of $\bar \S_x \cap (\bar \S_x - e_x)^\bot$, i.e. $\e_x \in \bar \S_x \cap \T_x$.

  \item This is straightforward from the fact that $\S_x = \enscond{\eta \in \RR^N}{\eta_{\T_x} = 0}$, $\bar \S_x = \S_x + e_x$ and $e_x \in \T_x$ from (i). 
  \end{enumerate}
\end{proof}

\begin{proof}[Proof of Proposition~\ref{prop:anti-coer}]
It follows from Lemma~\ref{lem:convex-equivalence}(v) since $0 \in \ri (\partial \J(x) - f_x)$.
\end{proof}

\begin{proof}[Proof of Proposition~\ref{prop:antig-polar}]
  The gauge $\antigPx$ is the support function of the set 
  \begin{equation*}
    \Kk_x \eqdef \partial J(x) - f_x = \enscond{\eta \in \RR^N}{\antig_{\f_x}(\eta) \leq 1} \subset \S_x ~,
  \end{equation*}
  where the inclusion follows from Proposition~\ref{prop:anti-coer}.
  \begin{enumerate}[(i)]
  \item Since $\Kk_x$ is a bounded set, its support function is finite-valued \cite[Proposition~V.2.1.3]{hiriart1996convex}. It then follows that $\dom \antigPx = \RR^N$.
  \item We have
  \begin{align*}
    \antigPx(d)
    &= \sup_{\eta \in \Kk_x} \dotp{\eta}{d} 
     = \sup_{\antigx(\eta) \leq 1} \dotp{\eta}{d} 
     = \sup_{\antigx(\eta_{\S_x}) \leq 1}
       \dotp{\eta_{S_x}}{d} \\
    &= \sup_{\eta \in \Kk_x} \dotp{\eta}{d_{\T_x}} 
       + \dotp{\eta}{d_{\S_x}} 
     = \sup_{\eta \in \Kk_x} \dotp{\eta}{d_{\S_x}} \\
    &= \antigPx(d_{S_x}) ~,
  \end{align*}
  where we used the fact that $\dotp{\eta}{d_{\T_{x}}}=0$ on $\Kk_x$.
  
  \item As a consequence of (ii), $\antigPx(d_{\T_{x}})= 0$. Clearly, $\T_{x} \subset \Ker(\antigPx)$ and $\antigPx$ is constant along all affine subspaces parallel to $\T_{x}$. But, since $0 \in \ri \Kk_x$, excluding the origin, the supremum in $\antigPx$ is always attained at some nonzero $\eta \in \Kk_x \subset \S_{x}$. Thus $\antigPx(d) > 0$ for all $d$ such that $d \notin \T_{x}$. This shows that actually $\Ker(\antigPx) = \T_{x}$. In particular, this yields that on $\S_x$, the gauge $\antigPx$ is coercive. 
  \end{enumerate}
\end{proof}

\begin{proof}[Proof of Theorem~\ref{thm:decomp}]
  Invoking Proposition~\ref{prop:convex-basics-decompos}, we get that for every $\eta \in \partial \J(x)$, $\eta_{\T_x} = e_x$, and $\proj_{\T_x}(f_x)=e_x$.
  It remains now to uniquely characterize the part of the subdifferential lying in $\S_x$, i.e. $\partial \J(x) - e_x$.
  Since $f_x \in \ri \partial \J(x)$, we have from the one-to-one correspondence of Lemma~\ref{lem:convex-equivalence}(i) and the definition of the subdifferential gauge,
  \begin{align*}
  \eta \in \enscond{\eta \in \RR^N}{\antigx(\eta_{\S_x} - \proj_{\S_x}(f_x)) \leq 1} 
  &\iff \eta_{\S_x} - \proj_{\S_x}(f_x) \in \partial \J(x) - f_x \\
  &\iff \eta_{\S_x} \in \partial \J(x) - e_x \\
  &\iff \eta \in \partial \J(x) ~.
  \end{align*}
\end{proof}

\begin{proof}[Proof of Proposition~\ref{prop:foc}]
  This is a convenient rewriting of the fact that $x$ is a global minimizer if, and only if, $0$ is a subgradient of the objective function at $x$.
  \begin{enumerate}[(i)]
  \item For problem \eqref{eq:reg}, this is equivalent to
  \begin{equation*}
    \frac{1}{\lambda} \Phi^* (y - \Phi x) \in \partial J(x) .
  \end{equation*}
  Projecting this relation on $\T$ and $\S$ yields the desired result.
  
  \item Let's turn to problem \eqref{eq:reg-noiseless}. We have at any global minimizer $x$
  \begin{align*} 
  0 &\in \partial \J(x) + \Phi^* N_{\enscond{\alpha}{\alpha=y}}(\Phi x)
  \end{align*}
  where $N_{\enscond{\alpha}{\alpha=y}}(x)$ is the normal cone of the constraint set $\enscond{\alpha}{\alpha=y}$ at $x$, which is obviously the whole space $\RR^Q$.
  Thus, this monotone inclusion is equivalent to the existence of $\alpha \in \RR^Q$ such that
  \[
  \Phi^* \alpha \in \partial \J(x) ~.
  \]
  Projecting again this on $\T$ and $\S$ proves the assertion.
  \end{enumerate}
\end{proof}

\begin{proof}[Proof of Lemma~\ref{lem:eq-separable}]
  Let $\J=\gamma_C$, $x \in \T$ and $x' \in \S$.

  $\Rightarrow$:
  By virtue of Lemma~\ref{lem:convex-polar-gauge}, we have
  \begin{align*}
    \Js(x + x') &= \sup_{u \in C} \dotp{x + x'}{u} \\
    		&= \sup_{\J(u) \leq 1} \dotp{x + x'}{u} \\
		&= \sup{\J(u_{\T} + u_{\S}) \leq 1} \dotp{x}{u_{\T}} + \dotp{x'}{u_{\S}} \\
		&= \sup{\J(u_{\T}) + \J(u_{\S}) \leq 1} \dotp{x}{u_{\T}} + \dotp{x'}{u_{\S}} \\
		&= \sup_{\rho \in [0,1]}\sup_{\J(u_{\T}) \leq \rho,\J(u_{\S}) \leq 1-\rho} \dotp{x}{u_{\T}} + \dotp{x'}{u_{\S}} \\
		&= \sup_{\rho \in [0,1]} \rho \sup_{\J(u_{\T}) \leq 1} \dotp{x}{u_{\T}} + (1-\rho) \sup_{\J(u_{\S}) \leq 1} \dotp{x'}{u_{\S}} \\
		&= \sup_{\rho \in [0,1]} \rho \sup_{v \in C \cap \T} \dotp{x}{v} + (1-\rho) \sup_{w C \cap \T} \dotp{x'}{w} \\
		&= \sup_{\rho \in [0,1]} \rho \sigma_{C \cap \T}(x) + (1-\rho) \sigma_{C \cap \S}(x') \\
		&= \max(\sigma_{C \cap \T}(x),\sigma_{C \cap \S}(x')) ~.
  \end{align*}
  Since
  \[
  \sigma_{C \cap \T}(x) = \co{\inf(\sigma_C(x),\iota_{\S}(x))} = \sigma_C(x) = \Js(x)
  \]
  and
  \[
  \sigma_{C \cap \S}(x') = \co{\inf(\sigma_C(x'),\iota_{\T}(x'))} = \sigma_C(x') = \Js(x') ~,
  \]
  the implication follows.

  $\Leftarrow$:
  Using again Lemma~\ref{lem:convex-polar-gauge}, we get
  \begin{align*}
    \J(x + x') 	&= \sup_{u \in C^\circ} \dotp{x + x'}{u} \\
    		&= \sup_{\Js(u_{\T}+u_{\S}) \leq 1} \dotp{x}{u_{\T}} + \dotp{x'}{u_{\S}} \\
		&= \sup_{\max(\Js(u_{\T}),\Js(u_{\S})) \leq 1} \dotp{x}{u_{\T}} + \dotp{x'}{u_{\S}} \\
		&= \sup_{\Js(u_{\T}) \leq 1, \Js(u_{\S}) \leq 1} \dotp{x}{u_{\T}} + \dotp{x'}{u_{\S}} \\
		&= \sup_{v \in C^\circ \cap \T} \dotp{x}{v} + \sup_{w \in C^\circ \cap \S} \dotp{x'}{w} \\
		&= \sigma_{C^\circ \cap \T}(x) + \sigma_{C^\circ \cap \S}(x') \\
		&= \co{\inf(\sigma_{C^\circ}(x),\iota_{\S}(x))} + \co{\inf(\sigma_{C^\circ}(x'),\iota_{\T}(x'))} \\
		&= \sigma_{C^\circ}(x) + \sigma_{C^\circ}(x') \\
		&= \J(x) + \J(x') ~.
  \end{align*}
  This concludes the proof.
\end{proof}

\begin{proof}[Proof of Proposition~\ref{prop:strong-dec}]

  Let $\J=\gamma_C$. We only need to show that $J_{e_x}^{x,\circ}(\eta_{\S_x}) = \Js(\eta_{\S_x})$.
  This follows from Proposition~\ref{prop:anti-coer}, Lemma~\ref{lem:eq-separable} and Lemma~\ref{lem:convex-polar-gauge}(ii). Indeed, 
  \begin{align*}
    J_{e_x}^{x,\circ}(\eta_{\S_x}) 	&= \inf_{\tau \geq 0} \max( \Js(\tau e_x + \eta_{S_x}), \tau) & \text{from Proposition~\ref{prop:anti-coer},}\\
    				&= \inf_{\tau \geq 0} \max(\tau \Js(e_x),\Js(\eta_{S_x}),\tau) & \text{from Lemma~\ref{lem:eq-separable},} \\
				&= \inf_{\tau \geq 0} \max(\Js(\eta_{S_x}),\tau)  & \text{from $e_x \in \partial J(x) \subset C^\circ$,} \\
				&= \Js(\eta_{S_x}) ~.
  \end{align*}
\end{proof}



\section{Proofs of Section~\ref{sec:uniqueness}}

\begin{proof}[Proof of Lemma~\ref{lem:same-image}]
  Let $x_1, x_2$ be two (global) minimizers of \eqref{eq:reg}.
  Suppose that $\Phi x^1 \neq \Phi x^2$.
  Define $x_t = t x_1 + (1-t) x_2$ for any $t \in (0,1)$.
  By strict convexity of $u \mapsto \norm{y - u}_2^2$, one has
  \begin{equation*}
    \dfrac{1}{2} \norm{y - \Phi x_t}_2^2
    <
      \frac{t}{2} \norm{y - \Phi x_1}_2^2
    + \frac{1-t}{2} \norm{y - \Phi x_2}_2^2 .
  \end{equation*}
  Since $\J$ is convex, we get
  \begin{equation*}
    \J(x_t) \leq t \J(x_1) + (1-t) \J(x_2) .
  \end{equation*}
  Combining these two inequalities contradicts the fact that $x_1,x_2$ are global minimizers of \eqref{eq:reg}.
\end{proof}

\begin{proof}[Proof of Theorem~\ref{thm:snsp}]
To prove this theorem, we need the following lemmata.
\begin{lem}\label{prop:uniqueness-directional}
  Let $C$ be a non-empty closed convex set and $f$ a proper lsc convex function.
  Let $x$ be a minimizer of $\min_{z \in C} f(z)$.
  If
  \begin{equation*}
    f'(x,z - x) > 0 \quad \forall z \in C, z \neq x ~, 
  \end{equation*}
  then, $x$ is the unique solution of $f$ on $C$.
\end{lem}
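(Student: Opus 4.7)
The plan is a short proof by contradiction leveraging convexity of $f$. Suppose $x$ is not the unique minimizer, so there exists $z \in C$ with $z \neq x$ and $f(z) = f(x)$ (both being global minimizers of $f$ over $C$). The goal is to show that this forces $f'(x, z-x) \leq 0$, in direct contradiction with the hypothesis.

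To this end, for any $t \in (0,1)$, convexity of $C$ gives $x + t(z-x) = (1-t)x + t z \in C$, and convexity of $f$ yields
\begin{equation*}
f\bigl(x + t(z-x)\bigr) \leq (1-t) f(x) + t f(z) = f(x),
\end{equation*}
since $f(z) = f(x)$. Rearranging, we get
\begin{equation*}
\frac{f(x + t(z-x)) - f(x)}{t} \leq 0 \quad \text{for all } t \in (0,1).
\end{equation*}
Passing to the limit $t \downarrow 0$ and recalling that $f'(x, \cdot)$ exists as a limit (in fact, for convex $f$, the difference quotient is monotone non-decreasing in $t>0$, so the limit coincides with the infimum over $t>0$), we obtain
\begin{equation*}
f'(x, z-x) \leq 0,
\end{equation*}
which contradicts the standing hypothesis $f'(x, z-x) > 0$ for every $z \in C$ with $z \neq x$. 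Therefore $x$ must be the unique minimizer of $f$ on $C$.

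There is essentially no obstacle: the argument is a one-line convexity computation. The only point worth mentioning is ensuring that the directional derivative $f'(x, z-x)$ is well-defined and given by the limit of the difference quotients, which is automatic for proper convex $f$ whenever $x$ lies in the domain of $f$ (granted here since $x$ is a minimizer, hence $f(x) < +\infty$), and $x + t(z-x) \in \dom f$ for all $t$ sufficiently small (in fact for all $t \in [0,1]$, as shown above).
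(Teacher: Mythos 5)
Your proof is correct and follows essentially the same route as the paper: both arguments rest on the convexity inequality for the difference quotient $t \mapsto (f(x+t(z-x))-f(x))/t$ and its limit $f'(x,z-x)$. The paper phrases it directly (monotonicity of the quotient gives $f(z)-f(x) \geq f'(x,z-x) > 0$), while you argue the contrapositive by contradiction, but the underlying computation is identical.
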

\begin{proof}
  We first show that $t \mapsto \pa{f(x + t(z - x)) - f(z)}/t$ is non-decreasing on $(0,1]$.
  Indeed, let $g : [0,1] \rightarrow \RR$ a convex function such that $g(0) = 0$.
  Let $(t,s) \in (0,1]^2$ with $s > t$.
  Then,
  \begin{align*}
    g(t)
    = g \pa{s (t/s)}
    &= g \pa{s (t/s) + (1 - t/s)0} \\
    &\leq t \frac{g(s)}{s} + (1 - t/s)g(0) \\
    &= t \frac{g(s)}{s} ~,
  \end{align*}
  which proves that $t \in (0,1] \mapsto \frac{g(t)}{t}$ is non-decreasing on $(0,1]$.
  Since $f$ is convex, applying this result shows that the function
  \begin{equation*}
    t \in (0,1] \mapsto g(t) = f(x + t(z - x)) - f(z)
  \end{equation*}
  is such that $g(0) = 0$ and $g(t)/t$ is non-decreasing.
  
  Assume now that that $f'(x,z - x) > 0$.
  Then, for every $x \in C$,
  \begin{equation*}
    g(1) = f(z) - f(x) \geq f'(x,z - x) > 0, \quad \forall z \in C, z \neq x ~,
  \end{equation*}
  which is equivalent to $x$ being the unique minimizer of $f$ on $C$.
\end{proof}

We now compute the directional derivative of a finite-valued convex function $\J$.
\begin{lem}\label{lem:directional-gauge}
  The directional derivative $J'(x,\delta)$ at point $x \in \RR^N$ in the direction $\delta$ reads
  \begin{equation*}
    J'(x,\delta) = \dotp{e_x}{\delta_{\T_x}} 
                   + \dotp{P_{\S_x}(\f_x)}{\delta_{\S_x}}
                   + \antigPx(\delta_{\S_x}).
  \end{equation*}
\end{lem}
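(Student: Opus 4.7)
The plan is to start from the well-known identity $J'(x,\delta) = \sigma_{\partial J(x)}(\delta)$, which is valid since $J$ is convex with full domain (as recalled at the end of the \emph{Functions} paragraph in Section~\ref{sec:intro-notations}), and then exploit the decomposition of $\partial J(x)$ provided by Theorem~\ref{thm:decomp} to compute the supremum explicitly.

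First, I would split $\delta$ and any $\eta\in\partial J(x)$ into their $\T_x$ and $\S_x$ components. By Theorem~\ref{thm:decomp}, every $\eta\in\partial J(x)$ satisfies $\eta_{\T_x}=e_x$, so
\[
\dotp{\eta}{\delta}=\dotp{e_x}{\delta_{\T_x}}+\dotp{\eta_{\S_x}}{\delta_{\S_x}} ~.
\]
Since the first term does not depend on $\eta$, taking the supremum over $\partial J(x)$ reduces to evaluating $\sup_{\eta\in\partial J(x)}\dotp{\eta_{\S_x}}{\delta_{\S_x}}$.

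Next, I would parametrize the $\S_x$-component of $\partial J(x)$. Again by Theorem~\ref{thm:decomp} combined with $\partial J(x)-f_x\subset\S_x$ (Proposition~\ref{prop:anti-coer}), writing $\eta=f_x+v$ with $v\in\partial J(x)-f_x$ yields $\eta_{\S_x}=\proj_{\S_x}(f_x)+v$, and $v$ ranges precisely over the sublevel set $\{v\in\S_x:\antigx(v)\le 1\}=\partial J(x)-f_x$. Hence
\[
\sup_{\eta\in\partial J(x)}\dotp{\eta_{\S_x}}{\delta_{\S_x}}
=\dotp{\proj_{\S_x}(f_x)}{\delta_{\S_x}}+\sup_{\antigx(v)\le 1}\dotp{v}{\delta_{\S_x}} ~.
\]

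The final step is to identify the remaining supremum with $\antigPx(\delta_{\S_x})$. This is where Lemma~\ref{lem:convex-polar-gauge}(iii) applied to the compact convex set $C=\partial J(x)-f_x$ comes in: its gauge is $\antigx$ and therefore $\sigma_{C}=\gamma_{C^{\circ}}=\antigPx$, so the supremum equals $\antigPx(\delta_{\S_x})$. Assembling the three contributions delivers the announced formula. I do not anticipate any genuine obstacle; the only small care-point is to remember that $\partial J(x)-f_x$ lies inside $\S_x$ and that $\antigPx$ is honestly the support function of that set (not of its affine translate), so the offset $\proj_{\S_x}(f_x)$ must be pulled out as a separate linear term before invoking Lemma~\ref{lem:convex-polar-gauge}(iii).
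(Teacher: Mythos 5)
Your proposal is correct and follows essentially the same route as the paper's proof: both rest on the identity $J'(x,\cdot)=\sigma_{\partial J(x)}$, the decomposition $\eta_{\T_x}=e_x$ and $f_x=e_x+\proj_{\S_x}(f_x)$, and the fact that $\antigPx$ is the support function of $\partial J(x)-f_x$ (Lemma~\ref{lem:convex-polar-gauge}(iii), together with Proposition~\ref{prop:antig-polar}(ii)). The paper merely runs the same computation in the reverse direction, starting from $\antigPx(\delta_{\S_x})=\sup_{\eta\in\partial J(x)-\{f_x\}}\dotp{\eta}{\delta}$ and recovering $J'(x,\delta)$, so there is no substantive difference.
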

\begin{proof}
  This comes directly from the structure of $\antigPx$.
  Indeed, one has
  \begin{align*}
    \antigPx(\delta_{\S_x})
    &= \antigPx(\delta)
    & \text{Using Proposition~\ref{prop:antig-polar}(ii)} \\
    &=  \sup_{\eta \in \partial \J(x) - \ens{f_x}} \dotp{\eta}{\delta} \\
    &= -\dotp{\delta}{f_x} + \sup_{\eta \in \partial \J(x)} \dotp{\eta}{d} \\
    &= -\dotp{\delta}{f_x} + \J'(x,\delta) \\
    &= -\dotp{e_x}{\delta_{\T_x}} - \dotp{P_{\S_x}(\f_x)}{\delta_{\S_x}} + \J'(x,\delta) ~.
  \end{align*}
\end{proof}

We are now in position to show Theorem~\ref{thm:focu}. We provide the proof for \eqref{eq:reg}. That of \eqref{eq:reg-noiseless} is similar.

  Let $x$ be a solution of \eqref{eq:reg}.
  According to Lemma~\ref{lem:same-image}, the set of minimizers of \eqref{eq:reg} reads $\mathbf{M} \subseteq x + \Ker(\Phi)$, which is a closed convex set. We can therefore rewrite \eqref{eq:reg} as
  \begin{equation*}
    \min_{z \in \mathbf{M}} J(z) .
  \end{equation*}
  Invoking Lemma~\ref{prop:uniqueness-directional} with $C=\mathbf{M}$, $x$ is thus the unique minimizer if
  \begin{equation*}
    \forall \delta \in \Ker(\Phi) \setminus \ens{0},
    \quad
    \J'(x, \delta) > 0.
  \end{equation*}
  Using Lemma~\ref{lem:directional-gauge} and the fact that $\Ker(\Phi)$ is a subspace, this is equivalent to
  \begin{equation*}
    \forall \delta \in \Ker(\Phi) \setminus \ens{0},
    \quad
    \dotp{e_x}{\delta_{\T}} + \dotp{P_{\S}(\f_{x})}{\delta_{\S}}
    < \antigPx(- \delta_{\S}).
  \end{equation*}
  which is \eqref{eq:nsps}.
\end{proof}

\begin{proof}[Proof of Corollary~\ref{prop:uniqueness-topo}]
  Using~\cite[Theorem V.2.2.3]{hiriart1996convex}, we know that
  \begin{equation*}
    \eta \in \ri(\partial \J(x))
    \Leftrightarrow
    \J'(x,\delta) > \dotp{\eta}{\delta} \quad \forall \delta  \text{ such that } \J'(x,\delta) + \J'(x,-\delta) > 0 .
  \end{equation*}
  Applying this with $\eta = \Phi^* \alpha \in \ri(\partial \J(x))$, and using Lemma~\ref{lem:directional-gauge}, we obtain
  \begin{equation*}
    \Phi^* \alpha \in \ri(\partial \J(x))
    \Leftrightarrow
    \J'(x,\delta) > \dotp{\alpha}{\Phi \delta} \quad \forall \delta  \text{ such that } \antigPx(\delta) + \antigPx(-\delta) > 0 .    
  \end{equation*}
  Moreover, since $\antigPx$ and $\Ker(\antigPx) = \T_x = \T$ from Proposition~\ref{prop:antig-polar}(iii), and \eqref{eq:injT} holds, we get
  \begin{align*}
    \Phi^* \alpha \in \ri(\partial \J(x))
    &\Leftrightarrow
    \J'(x,\delta) > \dotp{\alpha}{\Phi \delta} \quad \forall \delta \notin \T \\
    &\Rightarrow
    \J'(x,\delta) > 0 \quad \forall \delta \in \Ker(\Phi) .
  \end{align*}
  We conclude using Theorem~\ref{thm:snsp}.
\end{proof}

\begin{proof}[Proof of Theorem~\ref{thm:focu}]
{~\\}\vspace*{-0.5cm}
  \begin{enumerate}[(i)]
  \item Let the dual vector $\alpha=(y-\Phi x)/\lambda$, and $\eta=\Phi^*\alpha \in \partial \J(x)$ by Theorem~\ref{thm:decomp}(i). We then observe that 
  \begin{align*}
  \eta \in \enscond{\eta \in \RR^N}{\antig_{f_x}(\eta_{\S} - \proj_{\S}(f_x)) < 1} 
  &\iff \eta_{\S} - \proj_{\S}(f_x) \in \ri (\partial \J(x) - \ens{f_x}) \\
  &\iff \eta \in \ri (\partial \J(x)) ~.
  \end{align*}
  Thus, applying Corollary~\ref{prop:uniqueness-topo} with such a dual vector yields the assertion.
  
  \item The proof is similar to (i) except that we invoke Theorem~\ref{thm:decomp}(ii).
  \end{enumerate}
\end{proof}



\section{Proofs of Section~\ref{sec:stable}}

\begin{proof}[Proof of Theorem~\ref{thm:lip-prg}]
  Without loss of generality, we show this result for $\Gamma = \norm{\cdot}$ since for every $x \in \RR^N$,
  \begin{equation*}
    \Gamma(x) \leq \normOP{\Id}{\Gamma}{\ldeux} \norm{x} .
  \end{equation*}

  Recall that $J$ is partly smooth at $x$ relative to $T_x$, and $\partial J : \RR^N \rightrightarrows \RR^N$ is Lipschitz-continuous around $x$ relative to $T_x$.
  \begin{itemize}
  \item \emph{Existence of $f_x$.}
    Such a mapping exists according to~\cite[Theorem 9.4.3]{aubin2009set}.
  \item \emph{$\nu$-stability.}
    Using~\cite[Proposition 2.10]{lewis2002active} the sharpness property Definition~\ref{defn:psg}(ii) is locally stable.
    Hence, for $x' \in T_x$ in a neighborhood of $x$, $\tgtManif{x'}{T_x} = T_x = T_{x'}$.
    The radius of this neighborhood can be taken as $\nu_x$.   
  \item \emph{$\mu$-stability.}
    Using~\cite[Corollary VI.2.1.3]{hiriart1996convex}, we write for any $h \in T_x$
    \begin{equation*}
      J(x + th) = J(x) + t \dotp{s}{h} + o(t) = J(x) + t \dotp{e_x}{h} + o(t),
    \end{equation*}
    where $s \in \mathbf{F}_{\partial J(x)}(h)$.
  Since $J$ restricted to $T_x \cap U$ is $\Cdeux$ according to the smoothness property, repeating this argument at order 2 allows to conclude that the mapping $z \in T_x \cap U \mapsto e_z$ is $\Calt{1}$, when local Lipschitz continuity follows immediately.  
  \item \emph{$\tau$-stability.}
    One has
    \begin{equation*}
      \antigx(\proj_\S(f_x - f_{x'})) \leq
      \normOP{\proj_{S_x}}{\antigx}{\ldeux} \norm{f_x - f_{x'}} \leq \tau_x \norm{x-x'},
    \end{equation*}
    where $\tau_x = \normOP{\proj_{S_x}}{\antigx}{\ldeux} \beta$ and $\beta$ is the Lipschitz constant associated to $f_x$, proving~\eqref{eq:lip-tau}.
  \item \emph{$\xi$-stability.}
    $\partial J$ is Lipschitz-continous around $x$ relative to $T_x$, and $x \mapsto f_x$ is $\beta$-Lipschitz-continuous.
    Hence, the mapping $x \mapsto (\partial J(x) - f_x)$ is Lipschitz on $T_x$.
    In view of Lemma~\ref{lem:polar-lip}, we get that
    \begin{equation*}
      \antigxp(u) - \antigx(u) \leq \beta \norm{x' - x} \norm{u} .
    \end{equation*}
    Since $\norm{u} \leq \normOP{\Id}{\ldeux}{\antigx} \antigx(u)$, we get the desired bound where $\xi_x = \beta \normOP{\Id}{\ldeux}{\antigx}$.
  \end{itemize}
\end{proof}

\begin{proof}[Proof of Proposition~\ref{prop:sum-dec}]
{~}\\\vspace*{-0.5cm}
\begin{enumerate}[(i)]
\item First, we have 
\[
\partial H(x) = \partial J(x) +   \partial G(x),
\]
Let $\SsJ = \Span(\partial J(x) -\eta^J)$ and $\SsG = \Span(\partial G(x) -\eta^G)$, for any pair $\eta^J\in \partial J(x)$ and $\eta^G \in \partial G(x)$. Choosing $\eta^H = \eta^J +  \eta^G \in \partial H(x)$ we have 
\begin{eqnarray*}
\SsH &=& \Span(\partial H(x) - \eta^H)\\
&=& \Span\left( (\partial J(x)-\eta^J)  \Minksum (\partial G(x)-\eta^G) \right) \\
&=& \Span\left( \Span(\partial J(x)-\eta^J)  \Minksum \Span(\partial G(x)-\eta^G) \right) \\
&=& \Span(\SsJ \cup \SsG).
\end{eqnarray*}
As a consequence we have $\TtH = (\SsH)^\bot= \TtJ \cap \TtG$. 

\item Moreover, since $\TtH \bot \, \SsJ \cup \SsG$ we have from Proposition~\ref{prop:convex-basics-decompos}(iii) that
\begin{eqnarray*}
e_H = \proj_{\TtH}(\partial H(x)) &=& \proj_{\TtH}(\partial J(x) \Minksum   \partial G(x))\\
&=& \proj_{\TtH} \pa{e_J+ \proj_{\SsJ} \partial J(x)+ e_G+  \proj_{\SsG} \partial G(x)}\\
&=& \proj_{\TtH} (e_J+e_G). \label{eq:eH}
\end{eqnarray*}

\item As $f_x^J \in \ri \partial J(x)$ and $f_x^G \in \ri \partial G(x)$, it follows from \cite[Corollary~6.6.2]{rockafellar1996convex} that
\[
f_x^H= f_x^J+f_x^G \in \ri \partial J(x) + \ri \partial G(x) = \ri \pa{\partial J(x) + \partial G(x)} = \ri \partial H(x) ~.
\]
The subdifferential gauge associated to $H$ is then
\[
\AntiG{H} = \gamma_{\partial H(x) - f_x^H} = \gamma_{\pa{\partial J(x) - f_x^J} + \pa{\partial G(x) - f_x^G}} ~,
\]
which is coercive and finite on $\SsH$ according to Proposition~\ref{prop:anti-coer}. Invoking Lemma~\ref{lem:gaugesum}, we get the desired result since for any $\rho \geq 0$,
\[
u \mapsto \rho\AntiG{J}(u) + (1-\rho) \AntiG{G}(\eta-u)= \rho\gamma_{\partial J(x) - f_x^J}(u) + (1-\rho)\gamma_{\partial G(x) - f_x^G}(\eta-u)
\] 
is finite and continuous on $\SsJ \cap (\SsG+\eta)$, for $\eta \in \SsH=\Span(\SsJ+\SsG)$ by (i).
\end{enumerate}
\end{proof}

\begin{proof}[Proof of Proposition~\ref{prop:sum-prg}]
In the following, all operator bounds that appear are finite owing to the coercivity assumption on the involved gauges in Definition~\ref{def:PRG} of a partly smooth regularizer.\\

It is straightforward to see that the function $\certCof^H=\max(\certCof^J,\certCof^G)$ is indeed a gauge, which is finite and coercive on $\TtH=\TtJ \cap \TtG$. Moreover, given that both $J$ and $G$ are partly smooth relative to a linear manifold at $x$ with corresponding parameters $\nu_x^J$ and $\nu_x^G$, we have with the advocated choice of $\certCof^H$ and $\nu_x^H$,
\[
\certCof^J(x - x') \leq \nu_x^J \qandq \certCof^G(x - x') \leq \nu_x^G,
\]
for every $\forall x' \in \TtH_x$ such that $\certCof^H(x - x') \leq \nu_x^H$. It follows that:
\begin{itemize}
\item Since $J$ and $G$ are both partly smmoth relative to a linear manifold, then we have $\TtJ_x=\TtJ_{x'}$ and $\TtG_x=\TtG_{x'}$, and thus by Proposition~\ref{prop:sum-dec}(i)
\[
\TtH_x = \TtJ_x \cap \TtG_x = \TtJ_{x'} \cap \TtG_{x'} = \TtH_{x'} = \TtH.
\]
\item {\bf $\mu^H_x$-stability:} we have from Proposition~\ref{prop:sum-dec}(ii)
\begin{align*}
\certCof^H(e^H_x - e^H_{x'}) &= \certCof^H\pa{ \proj_{\TtH}( e^J_x + e^G_x - e^J_{x'} - e^G_{x'} ) } \\
&\leq \certCof^H\pa{ \proj_{\TtH}( e^J_x - e^J_{x'})} + \certCof^H\pa{ \proj_{\TtH}(e^G_x  - e^G_{x'} ) } \\
& \leq \normOP{\proj_{\TtH}} {\certCof^J} {\certCof^H} \,   \certCof^J \pa{  e^J_x - e^J_{x'}}
+ \normOP{\proj_{\TtH}} {\certCof^G} {\certCof^H} \,   \certCof^G \pa{ e^G_x - e^G_{x'}} \\
&\leq \pa{\mu_x^J \, \normOP{\proj_{\TtH}}{\certCof^J}  {\certCof^H}+ \mu_x^G\, \normOP{\proj_{\TtH}}{\certCof^G} {\certCof^H}} \certCof^H(x-x') ~,
\end{align*}
where we used $\mu^J_x$- and $\mu^G_x$-stability of $J$ and $G$ in the last inequality.

\item {\bf $\tau^H_x$-stability:} the fact that $\SsJ \subseteq \SsH$ and $\SsG \subseteq \SsH$ and subadditivity of gauges lead to
\begin{align}\label{eq:sumstability_f}
&\AntiG{H} \pa{ \proj_{\SsH}(f^H_x -f^H_{x'})} \nonumber\\
 &= \AntiG{H} \pa{  \proj_{\SsJ}(f^J_x - f^J_{x'}) + \proj_{\SsG}(f^G_x - f^G_{x'}) + \proj_{\SsH}(e^J_x - e^J_{x'}) + \proj_{\SsH}(e^G_x - e^G_{x'}) } \nonumber \\
&\leq \AntiG{H} \pa{  \proj_{\SsJ}(f^J_x - f^J_{x'})} + \AntiG{H} \pa{ \proj_{\SsG}(f^G_x - f^G_{x'})} \nonumber\\
&\quad + \AntiG{H} \pa{ \proj_{\SsH}(e^J_x - e^J_{x'})} + \AntiG{H} \pa{ \proj_{\SsH}(e^G_x - e^G_{x'}) } ~. 
\end{align}
According to Proposition~\ref{prop:sum-dec}(iii), we have
\[
\AntiG{H} \pa{  \proj_{\SsJ}(f^J_x - f^J_{x'})} = \inf_{\eta_1+\eta_2=\proj_{\SsJ}(f^J_x - f^J_{x'})} \max(\AntiG{J}(\eta_1),\AntiG{G}(\eta_2)) ~.
\]
Since $\dom \AntiG{J}=\SsJ$, $(\eta_1,\eta_2)=(\proj_{\SsJ}(f^J_x - f^J_{x'}),0)$ is a feasible point of the last problem, and we get
\[
\AntiG{H} \pa{  \proj_{\SsJ}(f^J_x - f^J_{x'})} \leq \AntiG{J} \pa{\proj_{\SsJ}(f^J_x - f^J_{x'})}.
\]
Moreover, as $e^J_x ,e^J_{x'} \in \TtJ$ (see Proposition~\ref{prop:convex-basics-decompos}(ii)) and $\SsJ \subseteq \SsH$, we have
\begin{align*}
&  \min_{\eta_1 \in \TtJ, \eta_2 \SsJ, \eta_1+\eta_2 \in \SsH} \norm{\eta_1+\eta_2-(e^J_x - e^J_{x'})}^2 \\
&= \min_{\eta_1 \in \TtJ, \eta_2 \SsJ, \eta_1+\eta_2 \in \SsH} \norm{\eta_1-(e^J_x - e^J_{x'})}^2+\norm{\eta_2}^2 \\
&= \min_{\eta_1 \in \TtJ, \eta_2 \SsJ, \eta_1 \in \SsH} \norm{\eta_1-(e^J_x - e^J_{x'})}^2+\norm{\eta_2}^2 \\
&= \min_{\eta_1 \in \SsH \cap \TtJ} \norm{\eta_1-(e^J_x - e^J_{x'})}^2 ~.
\end{align*}
That is
\[
\proj_{\SsH}(e^J_x - e^J_{x'})  = \proj_{\SsH \cap \TtJ }(e^J_x - e^J_{x'}) ~.
\]
Thus
\[
\AntiG{H} \pa{  \proj_{\SsH}(e^J_x - e^J_{x'})} \leq \normOP{\proj_{\SsH \cap \TtJ}}{\certCof^J}{\AntiG{H}} \certCof^J \pa{e^J_x - e^J_{x'}} ~.
\]


Similar reasoning leads to the following bounds 
\begin{align*}
\AntiG{H} \pa{ \proj_{\SsG}(f^G_x - f^G_{x'})} &\leq \AntiG{G} \pa{\proj_{\SsG}(f^G_x - f^G_{x'})}, \\
\AntiG{H} \pa{ \proj_{\SsH}(e^G_x - e^G_{x'})} &\leq \normOP{\proj_{\SsH \cap \TtG}}{\certCof^J}{\AntiG{H}} \certCof^G \pa{e^G_x - e^G_{x'}} ~.
\end{align*}
Having this, we can continue to bound~\eqref{eq:sumstability_f} as
\begin{align*}
&\AntiG{H} \pa{ \proj_{\SsH}(f^H_x -f^H_{x'})} \\
&\leq \AntiG{J} \pa{  \proj_{\SsJ}(f^J_x - f^J_{x'})} + \AntiG{G} \pa{  \proj_{\SsG}(f^G_x - f^G_{x'})}  \\
&\quad + \normOP{\proj_{\SsH \cap \TtJ}}{\certCof^J}{\AntiG{H}} \certCof^J \pa{e^J_x - e^J_{x'}} + \normOP{\proj_{\SsH \cap \TtG}}{\certCof^J}{\AntiG{H}} \certCof^G \pa{e^G_x - e^G_{x'}} \\
&\leq \tau^J_x \, \certCof^J(x-x') + \tau^G_x \, \certCof^G(x-x') + \mu^J_x \, \normOP{\proj_{\SsH \cap \TtJ}}{\certCof^J}{\AntiG{H}} \certCof^J \pa{x - x'} \\ 
& \quad + \mu^G_x \, \normOP{\proj_{\SsH \cap \TtG}}{\certCof^G}{\AntiG{H}} \certCof^G\pa{x - x'} \\
&\leq \pa{\tau^J_x+\tau^G_x+ \mu^J_x \,  \normOP{\proj_{\SsH \cap \TtJ}}{\certCof^J}{\AntiG{H}} + \mu^G_x \,  \normOP{\proj_{\SsH \cap \TtG}}{\certCof^G}{\AntiG{H}} } \certCof^H(x-x') ~,
\end{align*}
where the last two inequalities $J$ and $G$ follow from $\mu^J_x$-, $\tau^J_x$-, $\mu^G_x$- and $\tau^G_x$- stability of $J$ and $G$.

\item {\bf $\xi^H_x$-stability:} Proposition~\ref{prop:sum-dec}(iii) again yields that for any $\eta \in \SsH$
\begin{align*}
H^\circ_{f^H_{x'}}(\eta) &= 	\inf_{\eta_1+\eta_2=\eta} \max(J^\circ_{f^J_{x'}}(\eta_1),G^\circ_{f^G_{x'}}(\eta_2)) \\
			 &\leq  \max(J^\circ_{f^J_{x'}}(\bar{\eta}_1),G^\circ_{f^G_{x'}}(\bar{\eta}_2))
\end{align*}
for any feasible $(\bar{\eta}_1,\bar{\eta}_2) \in \SsJ \times \SsG \cap \enscond{(\eta_1,\eta_2}{\eta_1+\eta_2=\eta}$.
Now both $J$ and $G$ are partly smooth relative to a linear manifold, hence respectively $\xi^J_x$- and $\xi^G_x$-stable. Therefore, with the form of $\certCof^H$ we have 
\begin{align*}
J^\circ_{f^J_{x'}}(\bar{\eta}_1)  &\leq (1+\xi^J_x \certCof^J(x-x'))\AntiG{J}(\bar{\eta}_1) \leq \beta \AntiG{J}(\bar{\eta}_1) \\
G^\circ_{f^G_{x'}}(\bar{\eta}_2)  &\leq (1+\xi^G_x \certCof^G(x-x'))\AntiG{G}(\bar{\eta}_2) \leq \beta \AntiG{G}(\bar{\eta}_2) ~,
\end{align*}
where $\beta = 1+ \max\pa{\xi^J_x, \xi^G_x} \certCof^H(x-x')$. Whence we get
\[
\max(J^\circ_{f^J_{x'}}(\eta_1),G^\circ_{f^G_{x'}}(\eta_2)) \leq \beta \max(\AntiG{J}(\bar{\eta}_1),\AntiG{G}(\bar{\eta}_2)) ~.
\]
Taking in particular 
\[
(\bar{\eta}_1,\bar{\eta}_2) \in \uArgmin{\eta_1+\eta_2=\eta} \max(\AntiG{J}(\eta_1),\AntiG{G}(\eta_2))
\]
we arrive at
\[
H^\circ_{f^H_{x'}}(\eta) \leq \beta \inf_{\eta_1+\eta_2=\eta} \max(J^\circ_{f^J_{x}}(\eta_1),G^\circ_{f^G_{x}}(\eta_2)) = \beta H^\circ_{f^H_{x}}(\eta) ~.
\]


\end{itemize}
This completes the proof.

\end{proof}

\begin{proof}[Proof of Corollary~\ref{cor:sum-smooth-sum}]
  G\^ateaux-differentiability entails that $\partial G(x) = \ens{\nabla G(x)}$, whence we obtain $T_x^{G} = \RR^N$ and $e_x^{G} = \nabla G(x)$.
  Applying Proposition~\ref{prop:sum-dec}, we get the result.
  It is sufficient to remark that the smooth perturbation $G$ translates the subdifferential $\partial J(x)$ by $\nabla G(x)$.
  Hence, using our choice of $f_x^{J+G}$, we find the same subdifferential gauge.
\end{proof}

\begin{proof}[Proof of Corollary~\ref{cor:sum-smooth-prg}]
  Since $G$ si $\Cdeux$ on $\RR^N$, it is obviously partly smooth relative to $T_x^G=\RR^N$ according to~\cite[Example 3.1]{lewis2002active}.
  We now exhibit the constants involved.
  
  \begin{itemize}
  \item \textbf{$\nu$-stability.}
  For every $x' \in \RR^N$, $x' \in T_x^G$, and thus $\nu_x^G = + \infty$, implying that $\nu_x^H = \nu_x^J$.
  
  \item \textbf{$\mu$-stability.}
  Using the $\mu$-stability of $J$ and the fact that $\nabla G$ is $\beta$-Lipschitz, we get that
  \begin{equation*}
    \mu_x^H = \mu_x^J \, \normOP{\proj_{\TtJ}}{\certCof^J}{\certCof^H} + \beta \, \normOP{\proj_{\TtJ}}{\ldeux} {\certCof^H} .
  \end{equation*}

  \item \textbf{$\tau$- and $\xi$-stability.}
  Since $S = \ens{0}$, $\tau_x^G = \xi_x^G = 0$, and we get from Proposition~\ref{prop:sum-prg}
  \begin{align*}
    \tau_x^H = \tau^J_x \qandq
    \xi_x^H = \xi_x^J .    
  \end{align*}
  \end{itemize}
\end{proof}

\begin{proof}[Proof of Proposition~\ref{prop:analysis-dec-form}]
  {~}\\\vspace*{-0.5cm}
  \begin{enumerate}[(i)] 
  \item One has $\partial J = D \circ \partial \J_0 \circ D^*$, hence $\S=D \Sz = \Im( D_{\Sz} )$ and $\T=\S^\perp=\Ker(D_{\Sz}^*)$.
  \item As $\S=D \bar\Sz=D \ez + \S$, we get rom Proposition~\ref{prop:convex-basics-decompos}
  \begin{align*}
  e \in \uargmin{z \in \bar\S} \norm{z} = \uargmin{z-D \ez \in \S} \norm{z} 	&= D \ez + \uargmin{h \in \S} \norm{h+D \ez} \\
  										&= D \ez + \proj_\S(-D \ez) = (\Id - \proj_\S) D \ez = \proj_\T D \ez ~.
  \end{align*}
  \item With such a choice of $f_x$, we have
  \[
  f_{0,D^*x} \in \ri \partial \J_0(D^* x) \Rightarrow D f_{0,D^*x} \in D \ri \partial \J_0(D^* x) 
  \]
  \[
  	\iff \f_x \in \ri D\partial \J_0(D^* x)  
  	\iff \f_x \in \ri \partial \J(x)  ~.
  \]
   We follow the same lines as in the proof of Lemma~\ref{lem:gaugelin}, where we additionally invoke Proposition~\ref{prop:antig-polar}(ii) to get
\begin{align*}
\antigPx(d)	&= \sigma_{\partial \J(x) - f_x}(d) \\
		&= \sigma_{D\pa{\partial \J_0(D^* x) - f_{0,D^*x}}}(d) \\
		&= \sigma_{\partial \J_0(D^* x) - f_{0,D^*x}}(D^*d) \\
		&= \antigPxa(D^*d) \\
		&= \antigPxa(D_{\S_0}^*d) ~.
\end{align*}
Note that $\antigPx$ is indeed constant along affine subspaces parallel to $\ker(D_{\S_0}^*)=\S^\perp=\T$.
We now get that for every $\eta \in \S=\ker(D_{\S_0}^+)^\perp$
\begin{align*}
\antigPx(\eta)	&= \sigma_{\antigPx(d) \leq 1}(\eta) \\
			&= \sigma_{\antigPxa(D_{\S_0}^*d) \leq 1}(\eta) \\
			&= \pa{\iota_{\antigPxa(w) \leq 1} \circ D_{\S_0}^*}^*(\eta) \\
			&= \inf_{v} \sigma_{\antigPxa(w) \leq 1}(v) \quad \mathrm{s.t.} \quad D_{\S_0} v = \eta \\
			&= \inf_{z \in \ker(D_{\S_0})} \antigxa(D_{\S_0}^+ \eta + z) ~.
\end{align*}
The infimum is finite and is attained necessarily at some $z \in \ker(D_{\S_0}) \cap \S_0 \neq \emptyset$ since $\dom \antigxa = \S_0$ and $\Im(D_{\S_0}^+)=\Im(D_{\S_0}^*) \subset \S_0$. Moreover, $\ker(D_{\S_0}) \cap \S_0 = \ker(D) \cap \S_0$.
\end{enumerate}
\end{proof}

\begin{proof}[Proof of Proposition~\ref{prop:gauge-analysis-stable}]
In the following, all operator bounds that appear are finite owing to the coercivity assumption on the involved gauges in Definition~\ref{def:PRG} of a partly smooth regularizer.
\begin{itemize}
\item Let $x'$ such that
  \begin{equation*}
    \certCo{x - x'}
    \leq
    \frac{1}{\normOP{D^*}{\Gamma}{\Gamma_0}}\nu_{0,D^* x}.
  \end{equation*}
  Hence,
  \begin{equation*}
    \Gamma_0(D^* x - D^* x')
    \leq
    \normOP{D^*}{\Gamma}{\Gamma_0}
    \Gamma(x - x')
    \leq
    \nu_{0,D^* x}
  \end{equation*}
  As $\J_0$ is a partly smooth relative to a linear manifold at $D^* x$, we have $\T_{0,D^*x} = \T_{0,D^*x'}=\Tz$ and consequently, using Proposition~\ref{prop:analysis-dec-form}(i), $\T_x = \Ker(D_{\S_{0,D^*x}}^*) = \Ker(D_{\S_{0,D^*x'}}^*) = \T_{x'} = \T = \S^\perp$.

\item {\bf $\mu_{x}$-stability:} we now have
  \begin{align*}
    \Gamma(e_x - e_x')
    &=
    \Gamma(\proj_\T D(e_{0,D^* x} - e_{0,D^* x'})) 
      & \text{Proposition~\ref{prop:analysis-dec-form}(ii)}\\
    &\leq
    \normOP{\proj_\T D}{\Gamma_0}{\Gamma}
    \Gamma_0(e_{0,D^* x} - e_{0,D^* x'}) \\
    &\leq
    \mu_{0,D^* x}
    \normOP{\proj_\T D}{\Gamma_0}{\Gamma}
    \Gamma_0(D^*x - D^*x') 
      & \text{using $\mu_{0,D^* x}$-stability of $\J_0$}\\
    &\leq
    \mu_{0,D^* x}
    \normOP{\proj_\T D}{\Gamma_0}{\Gamma}
    \normOP{D^*}{\Gamma}{\Gamma_0}
    \Gamma(x - x') .
  \end{align*}

\item {\bf $\tau_{x}$-stability:} since $f_{0,D^*x} \in \partial J_0(D^* x)$ and $f_{0,D^*x'} \in \partial J_0(D^* x')$, one has
  \begin{equation*}
    f_{0,D^*x} - f_{0,D^*x'}
    =
    \proj_{\Sz} (f_{0,D^*x} - f_{0,D^*x'}) + e_{0,D^* x} - e_{0,D^* x'} . 
  \end{equation*}
  Thus, subadditivity yields
  \begin{align*}
    &\antigx(\proj_\S(f_x - f_{x'}))
    =
    \antigx(\proj_\S D( f_{0,D^*x} - f_{0,D^*x'})) \\
    &\qquad \leq
    \antigx(\proj_\S D \proj_{\Sz}( f_{0,D^*x} - f_{0,D^*x'}))
    +
    \antigx(\proj_\S D ( e_{0,D^* x} - e_{0,D^* x'})) .
  \end{align*}
  Using Proposition~\ref{prop:analysis-dec-form}(iii) and $\tau_{0,D^* x}$-stability of $\J_0$, we get the following bound on the first term
  \begin{align*}
    & \antigx(\proj_\S D \proj_{\Sz}( f_{0,D^*x} - f_{0,D^*x'})) \\
    &=\inf_{z \in \ker(D) \cap \Sz} \antigxa(D_{\Sz}^+ \proj_\S D \proj_{\Sz}( f_{0,D^*x} - f_{0,D^*x'}) + z) \\
    &\leq \antigxa(D_{\Sz}^+ \proj_\S D \proj_{\Sz}( f_{0,D^*x} - f_{0,D^*x'})) \\
    &\leq
    \normOP{D_{\Sz}^+ \proj_\S D}{\antigxa}{\antigxa}
    \antigxa(\proj_{\Sz} (f_{0,D^*x} - f_{0,D^*x'})) \\
    &\leq
    \tau_{0,D^*x}
    \normOP{D_{\Sz}^+ \proj_\S D}{\antigxa}{\antigxa}
    \Gamma_0(D^*x - D^*x') \\
    &\leq
    \tau_{0,D^*x}
    \normOP{D_{\Sz}^+ \proj_\S D}{\antigxa}{\antigxa}
    \normOP{D^*}{\Gamma}{\Gamma_0}
    \Gamma(x - x') .
  \end{align*}
  Now, combining Proposition~\ref{prop:analysis-dec-form}(iii) and $\mu_{0,D^*x}$-stability of $\J_0$, we obtain the following bound on the second term
  \begin{align*}
    \antigx(\proj_\S D ( e_{0,D^* x} - e_{0,D^* x'}))
    &\leq \antigxa(D_{\Sz}^+ \proj_\S D ( e_{0,D^*x} - e_{0,D^*x'})) \\
    &\leq
    \normOP{D_{\Sz}^+ \proj_\S D}{\Gamma_0}{\antigxa}
    \Gamma_0(e_{0,D^* x} - e_{0,D^* x'}) \\
    &\leq
    \mu_{0,D^* x}
    \normOP{D_{\Sz}^+ \proj_\S D}{\Gamma_0}{\antigxa}
    \normOP{D^*}{\Gamma}{\Gamma_0}
    \Gamma(x - x') .
  \end{align*}
  Combining these inequalities, we arrive at
  \begin{align*}
    \antigx(\proj_\S(f_x - f_{x'}))
    &\leq
    \Big(
      \tau_{0,D^*x}
      \normOP{D_{\Sz}^+ \proj_\S D}{\antigxa}{\antigxa}\\
    &\qquad +
      \mu_{0,D^* x}
      \normOP{D_{\Sz}^+ \proj_\S D}{\Gamma_0}{\antigxa}
    \Big)
    \normOP{D^*}{\Gamma}{\Gamma_0}
    \Gamma(x - x') ,
  \end{align*}
  whence we get $\tau_x$-stability.

\item {\bf $\xi_{x}$-stability:} from Proposition~\ref{prop:analysis-dec-form}(iii), we can write for any $\eta \in \S$
\begin{align*}
\antigxp(\eta)	&= \inf_{z \in \ker(D) \cap \Sz} \antigxap(D_{\Sz}^+ \eta + z) \\
			&\leq \antigxp(D_{\Sz}^+ \eta + \bar{z}) 
\end{align*}
for any $\bar{z} \in \ker(D) \cap \Sz$. 

Owing to $\xi_{0,D^*x}$-stability of $\J_0$, and since $D_{\S_0}^+ \eta \in \Sz$, we have for any feasible $\bar{z} \in \ker(D) \cap \Sz$
\[
\antigxap(D_{\Sz}^+ \eta + \bar{z}) \leq  \pa{1+\xi_{0,D^* x}\Gamma_0(D^*x-D^*x')}\antigxa(D_{\Sz}^+ \eta + \bar{z}) ~.
\]
Taking in particular 
\[
\bar{z} \in \uArgmin{z \in \ker(D) \cap \Sz} \antigxa(D_{\Sz}^+ \eta + z)
\]
we get the bound
\begin{align*}
\antigxp(\eta) 	&\leq \pa{1+\xi_{0,D^* x}\Gamma_0(D^*x-D^*x')} \inf_{z \in \ker(D) \cap \Sz}\antigxa(D_{\Sz}^+ \eta + z) \\
			&= \pa{1+\xi_{0,D^* x}\Gamma_0(D^*x-D^*x')} \antigxp(\eta) \\
			&= \pa{1+\xi_{0,D^* x}\normOP{D^*}{\Gamma}{\Gamma_0}\Gamma(x-x')} \antigxp(\eta) ~,
\end{align*}
where we used again Proposition~\ref{prop:analysis-dec-form}(iii) in the first equality.
\end{itemize}
\end{proof}



\section{Proofs of Section~\ref{sec:small}}

\begin{proof}[Proof of Theorem~\ref{thm:identifiability}]
  This is a straightforward consequence of Theorem~\ref{thm:focu}(ii) by constructing an appropriate dual certificate from $\IC(x_0)$.
  Denote $\e = \e_{x_0}$, $\f = \f_{x_0}$ and $\S=\T^\perp$. Taking the dual vector $\alpha =  \PhiT^{+,*} \e$, we have on the one hand
  \begin{equation*}
    \PhiT^* \PhiT^{+,*} \e = \e 
  \end{equation*}
  since $\e \in \Im(\PhiT^*)$.
  
  On the other hand,
  \begin{equation*}
    \antigxz(\PhiS^* \PhiT^{+,*} \e - \proj_{S} \f ) = \IC(x_0) < 1 .
  \end{equation*}
\end{proof}

\newcommand{\te}{\tilde \e}
\newcommand{\he}{\hat \e}
\newcommand{\hf}{\hat \f}
\newcommand{\hx}{\hat x}
\newcommand{\mutau}{ \bar\mu }
\newcommand{\myIC}{\IC(x_0)}
\newcommand{\choice}[1]{%
	\left\{ \begin{array}{l} #1	\end{array} \right.
}

\begin{proof}[Proof of Theorem~\ref{thm:local-stab}]
To lighten the notation, we let $\epsilon=\norm{w}$, $\nu=\nu_{x_0}, \mu=\mu_{x_0}, \tau=\tau_{x_0}, \xi=\xi_{x_0}, \f = \f_{x_0}$.\\

The strategy is to construct a vector which, by \eqref{eq:injT}, is the unique solution to
\begin{equation}\label{eq:restricted}\tag{$\Pp_\lambda^\T(y)$}
  \umin{x \in \T}
  \dfrac{1}{2} \norm{y - \Phi x}^2
  + \lambda \J(x) ~,
\end{equation}
and then to show that it is actually the unique solution to~\eqref{eq:reg} under the assumptions of Theorem~\ref{thm:local-stab}. 

The following lemma gives a convenient implicit equation satisfied by the unique solution to~\eqref{eq:restricted}.
\begin{lem}\label{lem:implicit-equation}
  Let $x_0 \in \RR^N$ and denote $\T = \T_{x_0}$.
  Assume that~\eqref{eq:injT} holds.
  Then~\eqref{eq:restricted} has exactly one minimizer $\hx$, and the latter satisfies 
  \begin{equation}\label{eq:implicit}
    \hx = x_0 + \PhiT^+ w - \lambda (\PhiT^* \PhiT)^{-1} \te
    \qwhereq
    \te \in \proj_T(\partial J(\hx)) .
  \end{equation}
\end{lem}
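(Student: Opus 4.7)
The plan is to establish existence and uniqueness of $\hat x$ via strict convexity and coercivity of the restricted objective on $\T$, then read off the implicit equation from its first-order condition together with the observation that $x_0 \in \T$.

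First I would handle existence and uniqueness. The restricted objective $F(x) = \tfrac12\|y-\Phi x\|^2 + \lambda J(x)$ is proper convex and continuous on $\T$ (since $J$ is finite-valued). Under the restricted injectivity hypothesis \eqref{eq:injT}, the operator $\Phi_\T^*\Phi_\T$ is positive definite on $\T$, so the quadratic term is strictly convex on $\T$. Adding the convex (and non-negative) $\lambda J$ preserves strict convexity, and the quadratic part is coercive on $\T$, so $F$ admits a unique minimizer $\hat x \in \T$.

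Next I would derive the implicit equation. The first-order minimality condition for $\hat x$ in the \emph{restricted} problem~\eqref{eq:restricted} is
\begin{equation*}
0 \in \proj_\T\bigl(-\Phi^*(y-\Phi\hat x) + \lambda\,\partial J(\hat x)\bigr),
\end{equation*}
i.e. there exists $\eta \in \partial J(\hat x)$ such that $\Phi_\T^*(y-\Phi\hat x) = \lambda\,\proj_\T\eta$. Setting $\tilde e = \proj_\T\eta \in \proj_\T(\partial J(\hat x))$ gives $\Phi_\T^*(y-\Phi\hat x) = \lambda\,\tilde e$.

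Now I would substitute $y = \Phi x_0 + w$ and invoke Proposition~\ref{prop:convex-basics-decompos-gauge}(ii), which ensures $x_0 \in \T_{x_0} = \T$. Together with $\hat x \in \T$, this yields $\Phi x_0 = \Phi_\T x_0$ and $\Phi \hat x = \Phi_\T \hat x$, so the optimality equation becomes
\begin{equation*}
\Phi_\T^*\Phi_\T(x_0 - \hat x) + \Phi_\T^* w = \lambda\,\tilde e.
\end{equation*}
Since $\hat x - x_0 \in \T$ and $\Phi_\T^*\Phi_\T$ is invertible on $\T$ (hypothesis \eqref{eq:injT}), I can invert to obtain
\begin{equation*}
\hat x = x_0 + (\Phi_\T^*\Phi_\T)^{-1}\Phi_\T^* w - \lambda\,(\Phi_\T^*\Phi_\T)^{-1}\tilde e = x_0 + \Phi_\T^+ w - \lambda\,(\Phi_\T^*\Phi_\T)^{-1}\tilde e,
\end{equation*}
using the identity $\Phi_\T^+ = (\Phi_\T^*\Phi_\T)^{-1}\Phi_\T^*$ valid on the range of $\Phi_\T^*$ when $\Phi_\T$ is injective on $\T$.

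No serious obstacle is expected here; the only subtlety is being careful that the restriction of $\Phi_\T^*\Phi_\T$ to $\T$ (rather than to all of $\RR^N$) is the one being inverted, and that $x_0 \in \T$ so that $\hat x - x_0$ lies in $\T$ and the inversion is legitimate.
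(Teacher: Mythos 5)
Your proof is correct and follows essentially the same route as the paper: uniqueness from strong/strict convexity of the restricted objective on $\T$ under \eqref{eq:injT}, the first-order optimality condition projected onto $\T$ (the paper phrases this via a change of variable to an unconstrained problem and Proposition~\ref{prop:foc}(i), while you use the normal cone to the subspace directly, which amounts to the same computation), and then inversion of $\Phi_\T^*\Phi_\T$ on $\T$ using $x_0\in\T$ and $\Phi_\T^+=(\Phi_\T^*\Phi_\T)^{-1}\Phi_\T^*$. Your explicit appeal to $x_0\in\T_{x_0}$ is a point the paper leaves implicit, so no gap remains.
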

\begin{proof}
  Assumption~\eqref{eq:injT} implies that the objective in~\eqref{eq:restricted} is strongly convex on the feasible set $T$, whence uniqueness follows immediately. By a trivial change of variable,~\eqref{eq:restricted} be also rewritten in the unconstrained form
  \begin{equation*}
    \hx = 
    \uargmin{x \in \RR^N}
    \dfrac{1}{2} \norm{y - \PhiT x}^2
    + \lambda \J(\proj_T x) ~.
  \end{equation*}
  Thus, using Proposition~\ref{prop:foc}(i), $\hx$ has to satisfy
  \begin{equation*}
    \PhiT^* (y - \PhiT \hx) + \lambda \te = 0,
  \end{equation*}
  for any $\te \in \proj_T(\partial J(\hx))$. Owing to the invertibility of $\Phi$ on $T$, i.e.~\eqref{eq:injT}, we obtain~\eqref{eq:implicit}.
\end{proof}

We are now in position to prove Theorem~\ref{thm:local-stab}. 
This is be achieved in three steps:
\begin{enumerate}[{\bf {Step}~1:}] 
\item We show that in fact $\T_{\hx} = \T$. 
\item Then, we prove that $\hx$ is the unique solution of~\eqref{eq:reg} using Theorem~\ref{thm:focu}.
\item We finally exhibit an appropriate regime on $\lambda$ and $\epsilon$ for the above two statements to hold.
\end{enumerate}

\subsubsection{Step 1: Subspace equality}

By construction of $\hx$ in~\eqref{eq:restricted}, it is clear that $\hx \in \T$. The key argument now is to use that $\J$ is partly smooth relative to a linear manifold at $x_0$, and to show that
\begin{equation}
\label{eq:prgxhat}
  \certCo{x_0 - \hx} \leq \nu, 
\end{equation}
which in turn will imply subspace equality, i.e. $\T_{\hx} = \T$ (see Definition~\ref{def:PRG}).

We have from~\eqref{eq:implicit} and subadditivity that
\begin{align}
        \certCo{x_0 - \hx}
  &\leq \certCo{-\PhiT^+ w}
      + \lambda \certCo{(\PhiT^* \PhiT)^{-1} \te} \nonumber\\
  &\leq \normOP{(\PhiT^* \PhiT)^{-1}}{\certCof}{\certCof} \left\{ \certCo{-\PhiT^* w} 
      + \lambda \certCo{\te} \right\} \nonumber \\
  &\leq \normOP{(\PhiT^* \PhiT)^{-1}}{\certCof}{\certCof}  
        \left\{ \normOP{\PhiT^{*}}{\ldeux}{\certCof} \epsilon + \alpha_0 \lambda
        \right\}. \label{bound:1}
\end{align}
where $\alpha_0=\certCo{\te}$.
Consequently, to show that~\eqref{eq:prgxhat} is verified, it is sufficient to prove that
\begin{equation}\label{eq:condition-1}\tag{$C_1$}
  A \epsilon + B \lambda\leq \nu,
\end{equation}
where we set the positive constants
\begin{align*}
A &= \normOP{(\PhiT^* \PhiT)^{-1}}{\certCof}{\certCof}  \normOP{\PhiT^{*}}{\ldeux}{\certCof},\\
B &=  \alpha_0 \normOP{(\PhiT^* \PhiT)^{-1}}{\certCof}{\certCof}. 
\end{align*}

Suppose for now that~\eqref{eq:condition-1} holds and consequently, $\T_{\hx} = \T$. Then decomposability of $\J$ on $\T$ (Theorem~\ref{thm:decomp}) implies that
\eq{
	\he = \proj_{\T_{\hx}}(\partial J(\hx)) = \proj_{\T}(\partial J(\hx)) = \te,
}
where we have denote $\he = \e_{\hx}$.
Thus~\eqref{eq:implicit} yields the following implicit equation
\begin{equation}\label{eq:implicit1}
  \hx = x_0 + \PhiT^+ w - \lambda(\PhiT^* \PhiT)^{-1} \he.
\end{equation}

\subsubsection{Step 2: $\hx$ is the unique solution of~\eqref{eq:reg}}

Recall that under condition~\eqref{eq:condition-1}, $\J$ is decomposable at $\hx$ and $x_0$ with the same model subspace $\T$. Moreover,~\eqref{eq:implicit1} is nothing but condition~\eqref{eq:cs-min-T} in Theorem~\ref{thm:focu} satisfied by $\hx$.
To deduce that $\hx$ is the unique solution of~\eqref{eq:reg}, it remains to show that~\eqref{eq:cs-min-S} holds i.e., 
\begin{equation}\label{eq-first-order-cond-proof}
  	\antighf( \lambda^{-1} \PhiS^*(y - \Phi \hx) - \hf_{\S} ) < 1.
\end{equation}
where we use the shorthand notations $\hf = f_{\hat x}$ and $\hf _S= \proj_\S \hf$.

Under condition~\eqref{eq:condition-1}, the $\xi$-stability property~\eqref{eq:lip-xi} of $\J$ at $x_0$ yields 
\begin{align}
  \antighf(\lambda^{-1} \PhiS^*(y - \Phi \hx) - \hf_{\S})
  &\leq \big(1+\xi\certCof(x_0 - \hx) \big) 
   \antigxz(\lambda^{-1} \PhiS^*(y - \Phi \hx) - \hf_{\S}) \label{eq:bound-J-x-x0} .
\end{align}
Furthermore, from~\eqref{eq:implicit1}, we can derive
\begin{equation}\label{eq:bb1}
  \lambda^{-1} \PhiS^*(y - \Phi \hx) - \hf_{\S}
  = \PhiS^* \PhiT^{+,*} \he + \lambda^{-1} \PhiS^* Q_T w - \hf_{\S} ,
\end{equation}
where $Q_\T = \Id - \PhiT \PhiT^{+} = \proj_{ \Ker(\PhiT^* ) }$. 
Inserting\eqref{eq:bb1} in~\eqref{eq:bound-J-x-x0}, we obtain
\begin{equation*}
  \antighf(\lambda^{-1} \PhiS^*(y - \Phi \hx) - \hf_{\S})
  \leq
  \big(1+\xi\certCof(x_0 - \hx) \big)  \antigxz(\PhiS^* \PhiT^{+,*} \he + \lambda^{-1} \PhiS^* Q_T w - \hf_{\S}) .
\end{equation*}
Moreover, subadditivity yields
\begin{align}
   \antigxz(\PhiS^* \PhiT^{+,*} \he + \lambda^{-1} \PhiS^* Q_T w - \hf_{\S}) \nonumber 
  &\leq
    \antigxz(\PhiS^* \PhiT^{+,*} \e - \f_{\S}) 
  + \antigxz(\PhiS^* \PhiT^{+,*} (\he - \e) ) \nonumber \\
  & + \antigxz( \proj_{\S} (\f - \hf) )
  + \antigxz(\lambda^{-1} \PhiS^* Q_T w) \label{eq:to-be-bounded}.
\end{align}
We now bound each term of~\eqref{eq:to-be-bounded}.
In the first term, one recognizes
\begin{equation}\label{eq:bound-t1}
  \antigxz(\PhiS^* \PhiT^{+,*} \e -  \f_\S)
  \leq
  \myIC .
\end{equation}
Appealing to the $\mu$-stability property, we get
\begin{align}
  \antigxz(\PhiS^* \PhiT^{+,*} (\he - \e) )
  &\leq
  \normOP{-\PhiS^* \PhiT^{+,*}}{\certCof}{\antigxz}
  \certCo{\e - \he} \nonumber \\
  &\leq
  \mu \normOP{-\PhiS^* \PhiT^{+,*}}{\certCof}{\antigxz}
 \certCo{x_0 - \hx} 
  . \label{eq:bound-t2}
\end{align}
From $\tau$-stability, we have
\begin{align}
  \antigxz(\f_\S - \hf_\S )
  &\leq
  \tau
  \certCo{x_0 - \hx} 
. \label{eq:bound-t3}
\end{align}
Finally, we use a simple operator bound to get
\begin{equation}\label{eq:bound-t4}
  \antigxz(\lambda^{-1} \PhiS^* Q_T w)
  \leq
  \frac{1}{\lambda}
  \normOP{\PhiS^* Q_T}{\ldeux}{\antigxz}
  \epsilon .
\end{equation}
Following the same steps as for the bound~\eqref{bound:1}, except using $\te = \he$ here, gives
\begin{align}
        \certCof\big(x_0 - \hx) \big)
\leq \normOP{(\PhiT^* \PhiT)^{-1}}{\certCof}{\certCof}  
        \left\{ \normOP{\PhiT^{*}}{\ldeux}{\certCof} \epsilon + \lambda \certCo{\he}
        \right\}.\label{bound:111}
\end{align}
Plugging inequalities~\eqref{eq:bound-t1}-\eqref{bound:111} into~\eqref{eq:bound-J-x-x0} we get the upper-bound
\begin{align*}
  &\antighf(\PhiS^* \PhiT^{+,*} \he + \lambda^{-1} \PhiS^* Q_T w - \hf_{\S}) \nonumber \\
  &\leq \,
\pa{1+\xi\certCof(x_0 - \hx)}
  \Big(
    \myIC + \certCof\pa{x_0 - \hx}  
    \big(\mu \normOP{-\PhiS^* \PhiT^{+,*}}{\certCof}{\antigxz} + \tau\big) \nonumber \\
  &\quad  +
    \frac{1}{\lambda}   \normOP{\PhiS^* Q_T}{\ldeux}{\antigxz} \epsilon
  \Big) \nonumber \\
    &\leq \,
\pa{1+\xi(c_1 \epsilon + \lambda c_2)}
  \Big(
    \myIC + (c_1 \epsilon + \lambda c_2)
    \mutau +
    \frac{1}{\lambda}   c_4 \epsilon
  \Big) <1, \label{eq:polynom1}
\end{align*}
where we have introduced
\eq{
	\mutau = \mu c_3 + \tau
	\qandq
	\alpha_1 = \certCo{\he} = \certCo{\te} = \alpha_0
} 
and
\begin{equation*}
\begin{array}{rclrcl}
  c_1 
  &=&
  A,
  & c_2
  & = &
  \alpha_1
  \normOP{(\PhiT^*\PhiT)^{-1}}{\certCof}{\certCof}, \\
  c_3
  &=&
  \normOP{-\PhiS^*\PhiT^{+,*}}{\certCof}{\antigxz},
  & c_4
  &=&
\normOP{\PhiS^* Q_T}{\ldeux}{\antigxz}.  
\end{array}
\end{equation*}
If is then sufficient that
\begin{align}
\pa{1+\xi(c_1 \epsilon + \lambda c_2)}
  \Big(
    \myIC + (c_1 \epsilon + \lambda c_2)
    \mutau +
    \frac{1}{\lambda}   c_4 \epsilon
  \Big) <1, \label{eq:polynom1}
\end{align}
for~\eqref{eq:cs-min-S} in Theorem~\ref{thm:focu} to be in force. 

In particular, if 
\eq{
	C \epsilon \leq \lambda
}
holds for some constant $C>0$ to be fixed later, then inequality~\eqref{eq:polynom1} is true if
\eql{\label{eq:polynom111}
	P(\lambda) = a \lambda^2 + b \lambda + c > 0
	\qwhereq
	\choice{
		a =  -\xi \mutau \pa{c_1/C+c_2}^2  \\
		b = -(c_1/C+c_2) \pa{ \xi \myIC+ \xi c_4/C+ \mutau }  \\
		c = 1-\myIC-c_4/C
	}~.
}
Let us set the value of $C$ to
\eq{
	C = \frac{2 c_4}{1-\myIC},\label{eq:ci}
}
which, for $0\leq \myIC<1$, it ensures that $c=\tfrac{1-\myIC}{2}$ is bounded and positive, and thus, the polynomial $P$ has a negative and a positive root $\lambda_{\max}$ equal to 
\begin{align*}
	\lambda_{\max} &= \frac{b}{2a}\phi\pa{- 4\frac{ac}{b^2} }, 
	\quad
	\choice{
		a =  -\xi \mutau ( (1-\myIC)c_1/(2c_4)+c_2)^2   \\
		b = - ( (1-\myIC)c_1/(2c_4)+c_2)  \pa{ \mutau + (1+\myIC)\xi/2  }  \\
		c = (1-\myIC)/2
	} \\
		&= \frac{ \mutau + (1+\myIC ) \xi/2 }
	{ \xi  \mutau  ( (1-\myIC )c_1/c_4 + 2c_2  ) }
	\phi\pa{
		\frac{ 2\xi (1-\myIC) \mutau }{ ( \mutau + (1+\myIC) \xi/2  )^2 } 
	}\\
	& \geq \frac{1-\myIC}{\xi} H(\mutau/\xi),
\end{align*}
where
\[ \phi(\beta) = \sqrt{1+\beta}-1, \qandq 	
	H(\beta) = \frac{ \beta + 1/2 }{ \beta (c_1/c_4 + 2c_2)} 
	\phi\pa{ \frac{2\beta}{ (\beta + 1 )^2 } }. 
	\]
To get the above lower-bound on $\lambda_{\max}$, we used that $\phi$ is increasing (in fact strictly) and concave on $\RR_+$ with $\phi(1)=0$, and that $\myIC \in [0,1[$.
Consequently, we can conclude that the bounds
\eql{\label{eq:labound1}\tag{$C_2$}
	\frac{2 c_4}{1-\myIC} \epsilon \leq \lambda \leq \frac{1-\myIC}{\xi} H(\mutau/\xi)	
}
imply condition~\eqref{eq:polynom1}, which in turn yields~\eqref{eq-first-order-cond-proof}.

\subsubsection{Step 3:~\eqref{eq:condition-1} and~\eqref{eq:labound1} are in agreement}

It remains now that show the compatibility of~\eqref{eq:condition-1} and~\eqref{eq:labound1}, i.e. to provide appropriate regimes of $\lambda$ and $\epsilon$ such that both conditions hold simultaneously.
We first observe that~\eqref{eq:condition-1} and the left-hand-side of~\eqref{eq:labound1} both hold for $\lambda$ fulfilling
\[ 
\lambda \leq C_0 \nu \qwhereq C_0 = \pa{\frac{A}{2c_4} + B}^{-1} \leq \pa{ \frac{1-\myIC}{2c_4} A + B }^{-1} ~.
\]
This updates~\eqref{eq:labound1} to the following ultimate range on $\lambda$
\begin{align*}
	\frac{2c_4}{1-\myIC} \epsilon &\leq \lambda \leq 
		\min\pa{ C_0 \nu, \frac{1-\myIC}{\xi} H(\mutau/\xi) } ~.
\end{align*}
Now in order to have an admissible non-empty range for $\lambda$, the noise level $\epsilon$ must be upper-bounded as
\eq{	\epsilon \leq \frac{1-\myIC}{2c_4} \min\pa{ C_0 \nu, \frac{1-\myIC}{\xi} H(\mutau/\xi) }. 
}
Finally, the constants provided in the statement of the theorem (and subsequent discussion) are as follows
\[
A_T = 2c_4, \,\, B_T = C_0, \,\, D_T = c_3, \,\text{and}\,\, E_T = c_1/c_4+2c_2  ~,
\]
which completes the proof.


\end{proof}



\section{Proofs of Section~\ref{sec:examples}}

\begin{proof}[Proof of Proposition~\ref{prop:ex-lun-swdg}]
  The subdifferential of $\normu{\cdot}$ reads
  \begin{equation*}
    \partial \normu{\cdot}(x) = \enscond{\eta \in \RR^N}{\eta_{(I)} = \sign(x_{(I)}) \qandq \normi{\eta_{(I^c)}} \leq 1} .
  \end{equation*}
  The expressions of $\S_x$, $\T_x$, $\e_x$ and $f_x$ follow immediately.
  Since $e_x \in \ri \partial \normu{\cdot}(x)$ and $\normu{\cdot}$ is separable, it follows from Definition~\ref{defn:strong-gauge} that the $\lun$-norm is a strong gauge. Therefore $\Js_{f_x} = \Js = \normi{\cdot}$, and Proposition~\ref{prop:strong-dec} specializes to the stated subdifferential.\\
  
  Turning to partial smothness, let $x' \in \T$, i.e. $I(x') \subseteq I(x)$, and assume that 
  \[
  \normi{x-x'} \leq \nu_x=(1-\delta)\umin{i \in I} \abs{x_i} \, , \delta \in ]0,1] ~.
  \] 
  This implies that $\forall i \in I(x)$, $|x'_i| > \nu_x - \normi{x - x'} \geq 0$, which in turn yields $I(x')=I(x)$, and thus $\T_{x'}=\T_{x}$. Since the sign is also locally constant on the restriction to $\T$ of the $\linf$-ball centered at $x$ of radius $\nu_x$, one can choose $\mu_x = 0$. Finally $\tau_x=\xi_x=0$ because $f_x=e_x$.
\end{proof}

\begin{proof}[Proof of Proposition~\ref{prop:ex-linf-swdg}]
The proof of the first part was given Section~\ref{sec:convex-geom} and Section~\ref{sec:decomposable-prop} where the $\linf$-norm example was considered.\\

It remains to show partial smothness.
  Let $x' \in \T$, and assume that 
  \[
  \normu{x - x'} \leq \nu_x=(1-\delta)\big(\norm{x}_\infty - \umax{j \notin I} \abs{x_j}\big), \delta \in ]0,1] ~.
  \]
  This means that $x'$ lies in the relative interior of the $\lun$-ball (relatively to $\T$) centered at $x$ of radius $\norm{x}_\infty - \umax{j \notin I} \abs{x_j}$. Within this ball, the support and the sign pattern restricted to the support are locally constant, i.e. $I(x)=I(x')$ and $\sign(x_{(I(x))})=\sign(x'_{(I(x'))})$. Thus $\T_{x'} = \T_x = T$ and $\e_{x'} = \e_x$, and from the latter we deduce that $\mu_x=0$. As $f_x=e_x$ we also conclude that $\tau_x=\xi_x=0$, which completes the proof.
\end{proof}

\begin{proof}[Proof of Proposition~\ref{prop:ex-l1l2-swdg}]
Again, the proof of the first part was given Section~\ref{sec:convex-geom} and Section~\ref{sec:decomposable-special} where the $\lun-\ldeux$-norm example was handled.\\

Let $x'\in \T$, i.e. $I(x') \subseteq I(x)$, and $\nu_x =(1-\delta)\umin{b\in I}\norm{x_b}$, $\delta \in ]0,1]$. First, observe that the condition 
\[
\norm{x - x'}_{\infty,2} = \max_{b \in \Bb} \norm{x_b - x'_b} \leq \nu_x
\] 
ensures that for all $b \in I$
\begin{align*}
\norm{x'_b} \geq \norm{x_b} -\norm{x_b-x'_b} > \nu_x  - \norm{x - x'}_{\infty,2} \geq 0,
\end{align*}
and thus $I(x')=I(x)$, i.e. $\T_{x'}=\T_{x}$. Moreover, since the gauge is strong, one has $\tau_x = \xi_x = 0$. To establish the $\mu_x$-stability we use the following lemma.

\begin{lem}\label{lem1}
Given any pair of non-zero vectors $u$ and $v$ where, $\norm{u-v} \leq \rho \norm{u}$, for $0< \rho < 1$,  we have
\eq{
\left\| \frac{u}{\norm{u}}-\frac{v}{\norm{v}} \right\| \leq C_\rho \frac{\norm{u-v}}{\norm{u}},
}
where $C_\rho = \frac{\sqrt{2}}{\rho} \sqrt{1-\sqrt{1-\rho^2}}\in ]1,\sqrt2[$.
\end{lem}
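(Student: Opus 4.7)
My plan is to reduce the inequality to a scalar optimization problem via the cosine identity, then analyze a one-variable maximum.

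\textbf{Step 1 (Key identity).} I would first establish that
\[
\left\| \frac{u}{\norm{u}} - \frac{v}{\norm{v}} \right\|^2
= \frac{\norm{u-v}^2 - (\norm{u}-\norm{v})^2}{\norm{u}\norm{v}}.
\]
This follows by expanding the left-hand side as $2 - 2\langle u, v\rangle/(\norm{u}\norm{v})$ and using the polarization identity $2\langle u, v\rangle = \norm{u}^2 + \norm{v}^2 - \norm{u-v}^2$. Note that the right-hand side is automatically non-negative by the reverse triangle inequality.

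\textbf{Step 2 (Reduction to a scalar maximum).} By positive homogeneity of the quantities involved, I can normalize $\norm{u}=1$, so that $b := \norm{v}$ satisfies $|1-b| \le c := \norm{u-v} \le \rho$. Setting $t = 1 - b \in [-c, c]$, the bound reduces to showing
\[
\phi(t) := \frac{c^2 - t^2}{1-t} \;\leq\; C_\rho^2\, c^2 \quad \text{for all } t \in [-c, c], \; c \in (0,\rho].
\]

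\textbf{Step 3 (Maximum over $t$).} A routine differentiation gives $\phi'(t) = (t^2 - 2t + c^2)/(1-t)^2$, whose only root in $[-c, c]$ is $t^\star = 1 - \sqrt{1-c^2}$. Plugging in and simplifying (using $1 - t^\star = \sqrt{1-c^2}$ and $(t^\star)^2 = 1 - c^2 - 2\sqrt{1-c^2}(1) + 2(1-c^2)$, i.e. $c^2 - (t^\star)^2 = 2\sqrt{1-c^2}(1-\sqrt{1-c^2})$) yields
\[
\max_{|t|\leq c} \phi(t) = 2\bigl(1 - \sqrt{1-c^2}\bigr),
\]
the boundary values $\phi(\pm c) = 0$ being trivially smaller.

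\textbf{Step 4 (Monotonicity in $c$).} To conclude, I need $2(1-\sqrt{1-c^2}) \leq C_\rho^2 c^2$ for $c \leq \rho$, which amounts to the monotonicity claim that
\[
g(c) := \frac{1 - \sqrt{1-c^2}}{c^2}
\]
is non-decreasing on $(0,1)$. Substituting $r = \sqrt{1-c^2}$, a direct calculation shows $g'(c) \geq 0$ is equivalent to $(1-r)^2 \geq 0$, which is immediate. Therefore $g(c) \leq g(\rho) = C_\rho^2/2$, giving the claim. The range $C_\rho \in (1, \sqrt{2})$ for $\rho \in (0,1)$ is a trivial consequence of the explicit formula.

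\textbf{Main obstacle.} The argument is entirely elementary; the only mildly delicate point is verifying that the critical point $t^\star = 1 - \sqrt{1-c^2}$ actually lies in the admissible interval $[-c, c]$ (it does, since $1 - \sqrt{1-c^2} \leq c$ is equivalent to $(1-c)^2 \leq 1-c^2$, i.e.\ $c \geq 0$), and correctly recognizing the algebraic simplification at $t^\star$ that yields the clean form $2(1-\sqrt{1-c^2})$.
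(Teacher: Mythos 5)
Your proof is correct and follows essentially the same route as the paper's: both expand the squared distance of the normalized vectors via the cosine/polarization identity, maximize over the remaining one-dimensional degree of freedom at fixed $c = \norm{u-v}/\norm{u}$ to reach the same worst-case value $2\bigl(1-\sqrt{1-c^2}\bigr)$ (the paper parametrizes by the angle between $u$ and $v-u$, you by the norm difference $t=\norm{u}-\norm{v}$), and then compare with $C_\rho^2 c^2$ — the paper via concavity of $\sqrt{\cdot}$, you via monotonicity of $g(c)=(1-\sqrt{1-c^2})/c^2 = 1/(1+\sqrt{1-c^2})$, which are equivalent. The only blemish is the parenthetical expansion of $(t^\star)^2$ in Step 3, which contains a typo (it should read $(1-\sqrt{1-c^2})^2 = 2 - c^2 - 2\sqrt{1-c^2}$), but the identity $c^2-(t^\star)^2 = 2\sqrt{1-c^2}\bigl(1-\sqrt{1-c^2}\bigr)$ that you actually use is correct, so nothing breaks.
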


\begin{proof}
  Let $d=v-u$ and $\beta=\frac{\dotp{u}{d}}{\norm{u}\norm{d}}\in [-1,1]$. We then have the following identities
  \begin{align}
    \left\| \frac{u}{\norm{u}}-\frac{v}{\norm{v}} \right\|^2 = 2-2\frac{\dotp{u}{v}}{\norm{u}\norm{v}}
    =2-2 \frac{\norm{u}^2+\norm{u}\norm{d}\beta}{\norm{u}\sqrt{\norm{u}^2+\norm{d}^2+2\norm{u}\norm{d}\beta}}, \label{eq:18}
  \end{align}
  for non-zero vectors $u$ and $d$, the unique maximizer of \eqref{eq:18} is $\beta^{\star}=-\norm{d}/\norm{u}$.
  Note that the assumption $\norm{d}/\norm{u}\leq \rho <1$ assures $\beta^{\star}$ to comply with the admissible range of $\beta$ and further, the argument of the square root will be always positive.
  Now, inserting $\beta^\star$ in \eqref{eq:18}, using concavity of $\sqrt{\cdot}$ on $\RR_+$, and that $\norm{d}/\norm{u}\leq \rho $, we can deduce the following bound 
  \begin{align*}
    \left\| \frac{u}{\norm{u}}-\frac{v}{\norm{v}} \right\|^2 \leq 2-2\sqrt{1-\frac{\norm{d}^2}{\norm{u}^2}}
    &= 2 - 2 \sqrt{\pa{1-\frac{\norm{d}^2}{\rho^2\norm{u}^2}}+\frac{\norm{d}^2}{\rho^2\norm{u}^2}\pa{1-\rho^2}} \\
    &\leq 2 - 2 \pa{\pa{1-\frac{\norm{d}^2}{\rho^2\norm{u}^2}} + \frac{\norm{d}^2}{\rho^2\norm{u}^2}\sqrt{1-\rho^2}} \\
    &= 2-2\left( 1- \frac{1-\sqrt{1-\rho^2}}{\rho^2}  \frac{\norm{d}^2}{\norm{u}^2} \right) \label{eq:19} \\
    &= 2\frac{1-\sqrt{1-\rho^2}}{\rho^2}  \frac{\norm{d}^2}{\norm{u}^2} ~.
  \end{align*}
\end{proof}

By definition of $\nu_x$, we have $(1-\delta)\norm{x_b} > \nu_x$, for $\delta \in ]0,1]$, $\forall b\in I$, and thus $\norm{x_b-x'_b} \leq \nu_x \leq (1-\delta)\norm{x_b}$. Lemma~\ref{lem1} then applies, and it follows that, $\forall b\in I$
\[
\norm{\Nn(x_b)-\Nn(x'_b)}  \leq C_\rho \frac{ \norm{x'_b-x_b} }{\norm{x_b}} \leq C_\rho \frac{ \norm{x'_b-x_b} }{\nu_x},
\]
and therefore we get
\begin{align*}
\norm{\Nn(x)-\Nn(x')}_{\infty,2}  
\leq \frac{C_\rho}{\nu_x} \norm{x'-x}_{\infty,2},
\end{align*}
which implies $\mu_x$-stability for $\mu_x=C_\rho/\nu_x$. 

\end{proof}

\begin{proof}[Proof of Proposition~\ref{prop:ex-polyh-swdg}]
  In general, the subdifferential of $\J_0$ reads
  \begin{equation*}
    \partial J_0(u) =
    \enscond
    {\sum_{i \in I} \rho_i s_i a^i}
    {\rho \in \Sigma_I, s_i \in
      \begin{cases}
        \ens{1} & \text{if } u_i > 0 \\
        [0,1]   & \text{if } u_i = 0 \\
        \ens{0} & \text{if } u_i < 0
      \end{cases}
    } ,
  \end{equation*}
  where $\Sigma_I$ is the canonical simplex in $\RR^{\abs{I}}$, and $I=\enscond{i \in \ens{1,\cdots,N_H}}{(x_i)_+=\J_0(x)}$.
  \begin{itemize}
  \item If $u_i \leq 0$, $\forall i \in \ens{1,\cdots,N_H}$, the above expression becomes
  \begin{equation*}
    \partial \J_0(u) = \enscond{\sum_{i \in I_0} \rho_i s_i a^i}{\rho \in \Sigma_{I_0}, s_i \in [0,1]} ~,
  \end{equation*}
  where $I_0 = \enscond{i \in \ens{1,\cdots,N_H}}{ u_i = \J_0(u) = 0 }$. Equivalently, $\partial \J_0(u)$ is the intersection of the unit $\ell^1$ ball and the positive orthant on $\RR^{|I_0|}$. The expressions of $\S_u$, $\T_u$ and $\e_u$ then follow immediately. $\partial \J_0(u)$ then contains $\e_u=0$, but not in its relative interior. Choosing any $\f_u$ as advocated, we have $\f_u \in \ri \partial \J_0(u)$. To get the subdifferential gauge, we use some calculus rules on gauges and apply Lemma~\ref{prop:anti-coer} to get
  \[
  J_{\f_u}^{u,\circ}(\eta_{(I_0)}) = \inf_{\tau \geq 0, ~~ \tau (\f_u)_i \geq -\eta_i ~ \forall i \in I_0} \max( \normu{\tau \f_u + \eta}, \tau) ~,
  \]
  where the extra-constraints on $\tau$ come from the fact that $\partial \J_0(u)$ is in the positive orthant, and the $\lun$ norm is the gauge of the unit $\lun$-ball. We then have
  \begin{align*}
  J_{\f_u}^{u,\circ}(\eta_{(I_0)}) 	&= \inf_{\tau \geq 0, ~ \mu\tau \geq \max_{i \in I_0}-\eta_i} \max( \tau \sum_{i \in I_0}\pa{\mu a^i + \eta_i}, \tau) \\
  				&= \inf_{\tau \geq \max_{i \in I_0}\pa{-\eta_i}_+/\mu} \max( \tau \mu |I_0|+\sum_{i \in I_0}\eta_i, \tau) ~.
  \end{align*}
  \item Assume now that $u_i > 0$ for at least one $i \in \ens{1,\cdots,N_H}$. In such a case, $\J_0(u)=\normi{u}$, and the subdifferential becomes
  \begin{equation*}
    \partial \J_0(u) = \Sigma_{I_+} ~,
  \end{equation*}
  where $I_+\enscond{i \in \ens{1,\cdots,N_H}}{ u_i = \J_0(u) \qandq u_i > 0  }$. The forms of $\S_u$, $\T_u$, $\e_u$, $\f_u$ and the subdifferential gauge can then be retrieved from those of the $\linf$-norm with $s_{(I_+)}=1$ and $s_{(I_+^c)}=0$.
  \end{itemize}
  
  For partial smothness, the parameters are derived following the same lines as for the $\linf$-norm.
  Let $u' \in \T$, and assume that
  \begin{equation*}
    \normu{u - u'} \leq \nu_u
    = (1-\delta) \left( \umax{i \in I_+} u_i - \umax{j \not\in I_+, u_j > 0} u_j \right) ,
  \end{equation*}
  for $\delta \in ]0,1]$.
  This means that $x'$ lies in the relative interior of the $\lun$-ball (relatively to $\T$) centered at $x$ of radius 
  \begin{equation*}
    \umax{i \in I_+} u_i - \umax{j \not\in I_+, u_j > 0} u_j
    =
    \norm{u}_\infty - \umax{j \not\in I_+, u_j > 0} \abs{u_j}
  \end{equation*}
  Within this set, one can observe that the set $I_+$ associated to $u$ is constant. Moreover, the sign pattern is also constant leading to the fact that $\T_{u'} = \T_u = T$. 
  Hence, we deduce as in the $\linf$-case that $\mu_u=\tau_u=\xi_u=0$.
\end{proof}


\section{Proofs of Section~\ref{sec:cs}}

\begin{proof}[Proof of Theorem~\ref{thm:linf-cs}]
  To lighten the notation, we drop the dependence on $x$ of $\T$, $\S$ and $\e$. Without of loss of generality, by symmetry of the norm, we will assume that the entries of $x$ are positive.

  We follow the same program as in the compressed sensing literature, see e.g.~\cite{candes2011simple}. The key ingredient of the proof is the fact that owing to the isotropy of the Gaussian ensemble, $\ceF$ and $\Phi^*_{\S}$ are independent. Thus, for some $\tau > 0$
  \[
  \Pr\pa{\IC(x) \geq 1} \leq \Pr\pa{\IC(x) \geq 1 \Big\vert \norm{\ceF} \leq \tau} + \Pr\pa{\norm{\ceF} \geq \tau} ~.
  \]
  As soon as $Q \geq \dim(\T) = N - \abs{I} + 1$, $\Phi_{\T}$ is full-column rank. Thus
  \[
  \norm{\ceF}^2 = \dotp{\e}{\pa{\Phi_{\T}^*\Phi_{\T}}^{-1}\e} ~.
  \]
  $\pa{\Phi_{\T}^*\Phi_{\T}}^{-1}$ is an inverse Wishart matrix with $Q$ degrees of freedom. To estimate the deviation of this quadratic form, we use classical results on inverse $\chi^2$ random variables with $Q-N+\abs{I}$ degrees of freedom and we get the tail bound
  \[
  \Pr\pa{\norm{\ceF} \geq \sqrt{\frac{1}{Q-N+\abs{I}-t}}\norm{\e}} \leq e^{-\tfrac{t^2}{4(Q-N+\abs{I})}}
  \]
  for $t > 0$. Now, conditionally on $\ceF$, the entries of $\alpha_{\S} = \proj_{\S}\Phi^*\ceF$ are i.i.d. $\Nn(0,\norm{\ceF}^2)$ and so are those of $-\alpha_{\S}$ by trivial symmetry of the centered Gaussian. Thus, using a union bound, we get
  \begin{align*}
    \Pr\pa{\IC(x) \geq 1 \Big\vert \norm{\ceF} \leq \tau} 	
    &\leq \Pr\pa{\umax{i \in I} (-(\alpha_{\S_x})_i)_+ \geq 1/\abs{I} \Big\vert \norm{\ceF} \leq \tau} \\
    &\leq \Pr\pa{\umax{i \in I} ((\alpha_{\S_x})_i)_+ \geq 1/\abs{I} \Big\vert \norm{\ceF} \leq \tau} \\
    &\leq \abs{I}\Pr\pa{(z)_+ \geq 1/(\tau\abs{I})} \\
    &\leq \abs{I}\Pr\pa{z \geq 1/(\tau\abs{I})} \\
    &\leq \abs{I}e^{-\tfrac{1}{2\tau^2\abs{I}^2}} ~.
  \end{align*}
  Observe that $(\alpha_{\S})_i=0$ for all $i \in I^c$. Choosing
  \[
  \tau = \sqrt{\frac{1}{\abs{I}(Q-N+\abs{I}-t)}}
  \]
  where we used that $\norm{\e}=1/\sqrt{I}$, and inserting in the above probability terms, we get
  \begin{align*}
    \Pr\pa{\norm{\ceF} \geq \tau} &\leq e^{-\tfrac{t^2}{4(Q-N+\abs{I})}} ~,\\
    \Pr\pa{\IC(x) \geq 1 \Big\vert \norm{\ceF} \leq \tau} 	&\leq e^{-\pa{\tfrac{Q-N+\abs{I}-t}{2\abs{I}}-\log(\abs{I}/2)}} ~.
  \end{align*}
  Equating the arguments of the exponentials and solving 
  \[
  \frac{t^2}{4 q} + \frac{t}{2\abs{I}} - \pa{\frac{q}{2\abs{I}}-\log\pa{\tfrac{\abs{I}}{2}}} = 0 
  \] 
  for $t$ to get equal probabilities, we get
  \[
  t = \frac{q}{\abs{I}}\pa{\sqrt{1 + 2\abs{I}\pa{1-2\tfrac{2\abs{I}\log\pa{\tfrac{\abs{I}}{2}}}{q}}} - 1} ~,
  \]
  where $q=Q-N+\abs{I} \geq 1$ by the restricted injectivity assumption. Setting
  \[
  \beta = \frac{q}{2\abs{I}\log\pa{\tfrac{\abs{I}}{2}}} ~,
  \]
  we get under the bound on $Q$ that $\beta > 1$, and 
  \[
  t = 2\beta \log\pa{\tfrac{\abs{I}}{2}}\pa{\sqrt{1 + 2\abs{I}\tfrac{\beta-1}{\beta}} - 1} ~.
  \]
  Inserting $t$ in one of the probability terms, and after basic algebraic rearrangements, we get the probability of success with the expression of the function $f(\beta,\abs{I})$.  
\end{proof}



  \bibliographystyle{plain}
  \bibliography{IMAIAI-PartlySmoothLinear}
\end{document}